\theoremstyle{plain}
\newtheorem{theorem}{Theorem}[section]
\newtheorem*{theorem*}{Theorem}
\newtheorem*{theoreme*}{Théorème}
\newtheorem{question}{Question}
\newtheorem*{question*}{Question}
\newtheorem{lemma}[theorem]{Lemma}
\newtheorem{corollary}[theorem]{Corollary}
\newtheorem{proposition}[theorem]{Proposition}
\theoremstyle{definition}
\newtheorem{remark}[theorem]{Remark}
\theoremstyle{definition}
\newtheorem{example}[theorem]{Example}
\DeclareMathOperator{\Aut}{Aut}
\DeclareMathOperator{\Fix}{Fix}
\DeclareMathOperator{\Span}{Span}
\DeclareMathOperator{\im}{Im}
\DeclareMathOperator{\Id}{Id}
\DeclareMathOperator{\Spec}{Spec}
\DeclareMathOperator{\Supp}{Supp}
\DeclareMathOperator{\Pic}{Pic}
\DeclareMathOperator{\CPic}{\overline{Pic}}
\DeclareMathOperator{\Br}{Br}
\DeclareMathOperator{\Bl}{Bl}
\DeclareMathOperator{\Prym}{Prym}
\DeclareMathOperator{\Pres}{Pres}
\DeclareMathOperator{\SheafHom}{\mathcal{H\kern -1pt}\textit{om}} 
\DeclareMathOperator{\sm}{sm}
\DeclareMathOperator{\Ann}{Ann}
\DeclareMathOperator{\et}{\acute{e}t} 
\DeclareMathOperator{\CH}{CH}
\DeclareMathOperator{\D}{D}
\DeclareMathOperator{\Ku}{Ku}
\def\dar[#1]{\ar@<2pt>[#1]\ar@<-2pt>[#1]}
\DeclareMathOperator{\Coh}{\mathbf{Coh}}
\newcommand\matA{\mathbb{A}}
\newcommand\matC{\mathbb{C}}
\newcommand\matG{\mathbb{G}}
\newcommand\matP{\mathbb{P}}
\newcommand\matZ{\mathbb{Z}}
\newcommand\calA{\mathcal{A}}
\newcommand\calC{\mathcal{C}}
\newcommand\calD{\mathcal{D}}
\newcommand\calE{\mathcal{E}}
\newcommand\calF{\mathcal{F}}
\newcommand\calH{\mathcal{H}}
\newcommand\calI{\mathcal{I}}
\newcommand\calJ{\mathcal{J}}
\newcommand\calL{\mathcal{L}}
\newcommand\calO{\mathcal{O}}
\newcommand\calS{\mathcal{S}}
\newcommand\calU{\mathcal{U}}
\newcommand\calV{\mathcal{V}}
\newcommand\calW{\mathcal{W}}
\newcommand\calX{\mathcal{X}}
\newcommand\calY{\mathcal{Y}}
\newcommand{\fonction}[5]{\begin{array}{lrcl} 
#1: & #2 & \longrightarrow & #3 \\
    & #4 & \longmapsto & #5 \end{array}}
\newcommand{\mysetminusD}{\hbox{\tikz{\draw[line width=0.6pt,line cap=round] (3pt,0) -- (0,6pt);}}}
\newcommand{\mysetminusT}{\mysetminusD}
\newcommand{\mysetminusS}{\hbox{\tikz{\draw[line width=0.45pt,line cap=round] (2pt,0) -- (0,4pt);}}}
\newcommand{\mysetminusSS}{\hbox{\tikz{\draw[line width=0.4pt,line cap=round] (1.5pt,0) -- (0,3pt);}}}
\newcommand{\mysetminus}{\mathbin{\mathchoice{\mysetminusD}{\mysetminusT}{\mysetminusS}{\mysetminusSS}}}
\title[Intermediate Jacobian fibration and Pryms]{The Intermediate Jacobian fibration of a cubic fourfold containing a plane and fibrations in Prym varieties}
\date{}
\author{Dominique Mattei}
\address{Universit\"at Bonn, Endenicher Allee 60, 53115 Bonn, Germany.}
\email{dmattei@math.uni-bonn.de}
\begin{document}

\maketitle

\begin{abstract}
	We give a description of the intermediate Jacobian fibration attached to a general complex cubic fourfold $X$ containing a plane as a Lagrangian subfibration of a moduli space of torsion sheaves on the K3 surface associated to $X$ up to a cover. To do so, we propose a general construction of Lagrangian fibrations in Prym varieties as subfibrations of Beauville--Mukai systems over some loci of nodal curves in linear systems on K3 surfaces.
\end{abstract}

\tableofcontents

\section{Introduction}

A \textit{hyperk\"ahler manifold} $M$, also called \textit{irreducible symplectic manifold}, is a simply connected compact K\"ahler manifold admitting a holomorphic symplectic form $\sigma$ that generates $H^0(M,\Omega^2_M)$. Matsushita \cite{MatsushitaOnFibreSpaceStructHKVariety} proved that any non-constant morphism $\pi\colon M \to B$ from such a manifold $M$ to a normal smaller dimensional variety $B$ is a \textit{Lagrangian fibration}, i.e. the smooth fibres of $\pi$ are Lagrangian with respect to $\sigma$. In particular $\dim M=2\dim B$, the smooth fibres are abelian varieties, and when $B$ is smooth it is isomorphic to projective space \cite{HwangBaseManFibrationHKvariety}. 

A typical example of such a fibration is the so-called \textit{Beauville--Mukai systems} (see \S \ref{SectionCompactifiedJacobianLagrangianFibrations}). It consists of a moduli space of stable torsion sheaves on a K3 surface supported on curves in a linear system $B$. When smooth and projective, this space is a hyperk\"ahler manifold deformation equivalent to a Hilbert scheme of points on a K3 surface \cite{YoshiokaStabFMII}. The map associating to a sheaf its support leads to a Lagrangian fibration over $B$, whose fibre over a smooth curve $C\in B$ is isomorphic to the Jacobian $J(C)$.

Hyperk\"ahler manifolds share suprising links with Fano manifolds, i.e. manifolds with ample anticanonical bundle. For instance, Beauville and Donagi \cite{BeauvilleDonagiVarieteDroitesCubicFourfold} proved that the variety of lines $F(X)$ of a smooth cubic fourfold $X\subset \matP^5$ is a hyperk\"ahler manifold of dimension four. The Fano variety $F(Y)$ of a smooth hyperplane section $Y\subset X$ is Lagrangian in $F(X)$, and its Picard variety is isomorphic to the \textit{intermediate Jacobian} $J(Y)$ of $Y$ \cite{ClemensGriffithsIntermediateJacobianCubicThreefold}. Donagi and Markman \cite{DonagiMarkmanSpectralCovers} endowed the relative intermediate Jacobian $\pi\colon\calJ\to \calU$ over the open locus $\calU\subset (\matP^5)^\vee$ of smooth hyperplane sections of $X$ with a symplectic form. Twenty years later, based on the work of several authors, Laza-Saccà-Voisin \cite{LazaSaccaVoisinHKCompactificationIntermediateJacobianFibrationCubic4fold} and Saccà \cite{SaccaBiratGeomIntJacFibCubic4f} constructed a hyperk\"ahler compactification of $\calJ$ of OG$10$-type, extending $\pi$ to a Lagrangian fibration over $(\matP^5)^\vee$. 

In this paper, we consider the case of a general cubic fourfold $X$ containing a plane. Our first result is a description of $\calJ$ as a finite cover of a Lagrangian subfibration of a Beauville--Mukai system. 

\begin{theorem*}[= Theorem \ref{ThmMainPrymSubFibration}]
	There exists a degree $2$ polarized K3 surface $(S,H_S)$ and a commutative diagram
	\begin{center}
		\begin{tikzcd}
			\calJ \ar[r,"f"] \ar[d] & M\coloneqq M_S(0,5H_S,-10) \ar[d] \\
			\calU \ar[r,"i"] & \matP^{26}
		\end{tikzcd}
	\end{center}
	such that
	\begin{enumerate}[label=(\roman*)]
		\item the vertical arrows are Lagrangian fibrations,
		\item the horizontal arrows are quasi-finite,
		\item the image $f(\calJ)$ is smooth and symplectic in $M$. 
	\end{enumerate}
	
\end{theorem*}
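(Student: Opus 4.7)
The plan is to exploit the classical identification of the intermediate Jacobian of a cubic threefold containing a line with a Prym variety, combined with the quadric-bundle description of the K3 surface $(S, H_S)$ associated to a cubic fourfold containing a plane.

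Projection from $P \subset X$ realises the blow-up $\mathrm{Bl}_P X$ as a quadric surface bundle over $\mathbb{P}^2$ with sextic discriminant $B$, and $\pi_S \colon S \to \mathbb{P}^2$ is the double cover branched along $B$ parametrising rulings of this fibration. For $H \in \mathcal{U}$, I set $Y = X \cap H$ and $\ell_H = P \cap H$; projecting $Y$ from $\ell_H$ inside $H \cong \mathbb{P}^4$ exhibits $\mathrm{Bl}_{\ell_H} Y$ as a conic bundle over $\mathbb{P}^2$ with discriminant a plane quintic $Q_H$. After identifying the two base planes compatibly with the two projections, one checks that the étale double cover $\tilde Q_H \to Q_H$ attached to the conic bundle coincides with the restriction of $\pi_S$. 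I would then define
\[
C_H \;:=\; \pi_S^{-1}(Q_H) \;\in\; |5H_S|,
\]
which gives the horizontal map $i \colon \mathcal{U} \to |5H_S| = \mathbb{P}^{26}$. The curve $C_H$ is nodal with exactly $30$ nodes located at $Q_H \cap B$, and its normalisation is canonically $\tilde Q_H$.

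For $f$, the Beauville--Mumford theorem yields $J(Y) \cong \mathrm{Prym}(\tilde Q_H / Q_H)$, realised as the anti-invariant part of $\mathrm{Pic}^0(\tilde C_H)$ under the covering involution. Pushforward along the normalisation $\nu \colon \tilde C_H \to C_H \hookrightarrow S$ sends a line bundle $L$ on $\tilde C_H$ of degree $d$ to a rank-one torsion-free sheaf on $S$ supported on $C_H$; a Riemann--Roch computation fixes the unique $d$ for which the Mukai vector equals $(0, 5H_S, -10)$. Performing the construction in families then produces the morphism $f\colon \mathcal{J} \to M$. Quasi-finiteness of $i$ reduces to showing that $Y$ and its line $\ell_H$ are determined up to finite ambiguity by the discriminant quintic $Q_H$, which is verified by a direct projective-geometric analysis of the family of hyperplanes containing a fixed line of $P$. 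Quasi-finiteness of $f$ then follows because $\nu_\ast$ is injective on line bundles, mapping $\mathrm{Pic}^0(\tilde C_H)$ into the stratum of the compactified-Jacobian fibre of $M$ over $C_H$ where the sheaf fails to be invertible at every one of the $30$ nodes.

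The Lagrangian property (i) is essentially given: the vertical right arrow is the Beauville--Mukai system, while the vertical left arrow carries the Donagi--Markman symplectic form. The main technical content is then (iii). Smoothness of $f(\mathcal J)$ I would obtain by proving that $f$ is an immersion: the differential of $\nu_\ast$ identifies the tangent space to $\mathrm{Prym}$ with a subspace of the Zariski tangent space to the image stratum in the compactified Jacobian, and that stratum is itself smooth along the Prym locus, while transversality in the horizontal direction comes from quasi-finiteness of $i$. For the symplectic assertion, the pullback $f^\ast \sigma_M$ is a closed holomorphic $2$-form on $\mathcal J$ which vanishes on the fibres of $\mathcal J \to \mathcal U$ (since fibres of $M$ are Lagrangian), and a comparison with the Donagi--Markman form on horizontal tangent vectors -- both being intrinsically expressible via cup product with the Kodaira--Spencer class -- should identify the two up to a nonzero scalar. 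The most delicate step will be carrying out the Prym construction and the normalisation pushforward globally in families over $\mathcal U$, and handling the special positions of $Q_H$ relative to $B$ that occur for some $H \in \mathcal{U}$.
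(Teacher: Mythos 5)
Your overall strategy is the paper's: identify $J(Y)\simeq\Prym(C_H/D_H)$, push line bundles forward along the normalization of $p^{-1}(D_H)\subset S$ into the Beauville--Mukai system, and use Torelli-type rigidity for quasi-finiteness. But there are two places where the proposal either asserts something false or skips the actual content.

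First, the curve $p^{-1}(D_H)$ is \emph{not} $30$-nodal. The whole construction hinges on the fact that the discriminant quintic $D_H$ is everywhere \emph{tangent} to the sextic $D_6$, meeting it in exactly $15$ points each of multiplicity $2$ (\cref{LemmaIntersectionD6DHmult2}, proved by a Jacobian-minor computation); then $p^{-1}(D_H)$ is $15$-nodal with one node over each tangency point (\cref{LemmaMult2IntImpliesNodalOnK3}), and its normalization is the genus-$11$ curve $C_H$ (check: $p_a(5H_S)=26$ and $26-15=11=2g(D_H)-1$). If $D_H$ met $D_6$ transversally in $30$ points, the preimage would be a \emph{smooth} curve and the induced double cover of $D_H$ would be branched, contradicting the fact that the Prym cover $C_H\to D_H$ is étale; your figure of $30$ nodes is arithmetically inconsistent with the normalization being an étale double cover of a genus-$6$ quintic. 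The everywhere-tangency is a nontrivial statement that must be proved, not a bookkeeping detail, and your sentence ``one checks that the étale double cover coincides with the restriction of $\pi_S$'' silently presupposes it.

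Second, point (iii) --- nondegeneracy of $\sigma_M$ restricted to $f(\calJ)$ --- is the technical heart, and your argument for it does not go through as stated. The image lies entirely in the \emph{singular locus} of the singular fibres $\CPic^{15}(\overline{C}_H)$ of $M\to|5H_S|$ (the deepest orbit, where sheaves fail to be locally free at every node), so ``fibres of $M$ are Lagrangian'' cannot be applied naively there: one needs that the symplectic form pulls back to zero on a resolution of the singular fibre (\cref{LemmaPullbackSympFormOnResolutionFibreIsTrivial}) together with an explicit description of the Zariski tangent space of the compactified Jacobian at such points, namely $T_{\Pic^0(C^\nu)}\oplus\matC^{2\delta}$, in order to show that $T_{\Pic^0(C^\nu)}$ pairs to zero with the whole fibre direction and hence pairs perfectly with a lift of $T_V$. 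Your proposed comparison of $f^*\sigma_M$ with the Donagi--Markman form via Kodaira--Spencer classes ``up to a nonzero scalar'' is precisely the assertion to be proved (the scalar being nonzero \emph{is} the nondegeneracy), and nothing in the proposal establishes it. The paper instead proves nondegeneracy on the full Jacobian stratum $N$ by the tangent-space computation (\cref{ThmSubfibrationJacobianNormalizations}) and then obtains the Prym part as the fixed locus of the symplectic involution $\kappa\circ\iota$ on $N$ (\cref{ThmPrymSubfibrationOverNodalCurves}), which yields smoothness and symplecticity of $f(\calJ)$ simultaneously; some such concrete mechanism is needed to close your argument.
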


The surface $S$ is the K3 surface associated to $X$ (in the sense of Hassett and Kuznetsov, see \cite{AddingtonThomasHodgeTheoryDerivedCatCubic4f}). It is a double cover of $\matP^2$ ramified over a sextic, and it is geometrically related to the variety of lines $F(X)$ of $X$ (see \S \ref{SectionGeomSettings}).

\begin{remark}
	The moduli space $M_S(0,5H_S,-10)$ is singular, however 
 $f(\calJ)$ lands in its smooth locus, see Lemma \ref{RmkPureDim1Rk1SheafIrredCurveIsStable}.
\end{remark}
\vspace{1\baselineskip}

The previous theorem is based on a general construction (see \S \ref{SectionCompactifiedJacobianLagrangianFibrations}) that we briefly explain now. Consider any K3 surface $S$ and a linear system $|\calL|$ such that the locally closed locus $V\subset |\calL|$ parametrizing curves with precisely $\delta>0$ nodes is not empty. Pick an Euler characteristic $\chi\in \matZ$ and consider the Beauville--Mukai system $\pi\colon M=M_S(0,[\calL],\chi)\to |\calL|$. For a suitable choice of $\chi$, the fibre of $\pi$ over a $\delta$-nodal curve $C\in V$ is the (shifted) \textit{compactified Jacobian} $\CPic^\delta(C)$. The latter is not smooth, but its singular locus admits an abelian part which is isomorphic to $\Pic^0(C^\nu)$ of the normalization $C^\nu \to C$.

Replace $V$ with one of its component and denote by $\calC \to V$ the universal curve over $V$. There exists a fibrewise resolution $\nu_V\colon \calC^\nu \to \calC$ over $V$, and the pushforward by $\nu_V$ gives a morphism
$$\Upsilon\colon \Pic^0_V(\calC^\nu) \hookrightarrow \CPic^\delta_V(\calC)$$
between the relative (compactified) Jacobians over $V$. 

\begin{theorem*}[=\cref{ThmSubfibrationJacobianNormalizations}]
	The image $N\coloneqq \Upsilon(\Pic^0_V(\calC^\nu))\subset M$ is a smooth symplectic subvariety of $M$ and $\pi|_N\colon N \to V$ is a Lagrangian subfibration of $\pi$.
\end{theorem*}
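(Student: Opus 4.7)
The statement naturally decomposes into three claims: $N$ is smooth and lies in $M^{\sm}$; the fibres of $\pi|_N$ are isotropic in $(M,\sigma)$; and $\sigma|_N$ is non-degenerate. The first two are essentially formal once $\Upsilon$ is a closed embedding into $M^{\sm}$, while the last is the real technical content.

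I would first restrict to a smooth open subset of $V$ parametrizing integral $\delta$-nodal curves, so that the universal family $\calC^\nu \to V$ is smooth and $\Pic^0_V(\calC^\nu)$ is smooth. For each such $C$, the pushforward $\nu_*L$ is a rank-one torsion-free sheaf on the integral curve $C$, hence automatically stable; stability implies that $[\nu_*L]$ is a smooth point of $M$ by the standard vanishing of traceless obstructions for moduli of sheaves on K3 surfaces. That $\Upsilon$ is a closed immersion follows from injectivity on points (by adjunction, $L$ is recovered from $\nu_*L$) and on tangent spaces (via the local-to-global spectral sequence for $\Ext^1_S(\nu_*L,\nu_*L)$). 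By Matsushita's theorem, the fibres of $\pi$ are Lagrangian in $(M,\sigma)$, so the fibre of $\pi|_N$ at $[C]$, being contained in $\CPic^\delta(C)$, is isotropic. A dimension count using $\dim|\calL| = p_a(C)$ and the codimension of $V$ in $|\calL|$ equal to $\delta$ gives $\dim N = 2(p_a(C)-\delta)$, exactly twice the fibre dimension of $\pi|_N$, so once $\sigma|_N$ is shown non-degenerate each fibre is automatically Lagrangian in $N$.

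The heart of the proof is non-degeneracy of $\sigma|_N$. At $x=\nu_*L \in N$, the tangent space $T_xN$ fits in
\begin{equation*}
0 \to H^1(C^\nu,\calO_{C^\nu}) \to T_xN \to T_{[C]}V \to 0,
\end{equation*}
embedded in the analogous sequence for $T_xM = \Ext^1_S(\nu_*L,\nu_*L)$ with left term $\Ext^1_C(\nu_*L,\nu_*L) = T_xF$, where $F=\pi^{-1}([C])$, and right term $T_{[C]}|\calL|$. The tangent-cotangent duality for the Lagrangian fibration $\pi$ identifies $T_xF \simeq T^*_{[C]}|\calL|$ via $\sigma$. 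The key claim is that under this identification, $H^1(\calO_{C^\nu}) \subset T_xF$ corresponds to the conormal quotient $T^*_{[C]}|\calL| \twoheadrightarrow T^*_{[C]}V$, so that the two short exact sequences become mutually $\sigma$-dual; this forces $\sigma$ to induce an isomorphism $T_xN \simeq T_x^*N$.

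The main obstacle is establishing this identification: equivalently, one must show that the $\delta$-dimensional quotient $\Ext^1_C(\nu_*L,\nu_*L)/H^1(\calO_{C^\nu})$, which is supported at the $\delta$ nodes of $C$ by local-to-global, is canonically dual through $\sigma$ to $T_{[C]}|\calL|/T_{[C]}V$, spanned by the first-order smoothings of those nodes. This reduces to a node-by-node local Grothendieck-duality computation for the finite map $\nu: C^\nu \to C$, combined with the standard description of first-order deformations of nodal curves in linear systems on a K3 surface.
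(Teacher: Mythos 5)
Your overall decomposition (closed immersion into the stable locus, isotropy of the fibres, non-degeneracy of $\sigma|_N$ as the real content) matches the paper, and your first exact sequence $0\to H^1(C^\nu,\calO_{C^\nu})\to T_xN\to T_{V,[C]}\to 0$ is correct. But the step you yourself identify as the heart of the proof rests on an identification that is dimensionally impossible at the points where it is needed. The point $x=[\nu_*L]$ lies in the deepest singular stratum of the fibre $F=\CPic^\delta(C)$, and $\pi$ is \emph{not} submersive there: one has $\dim T_xF=p_a+\delta$ (each node contributes \emph{two} extra tangent directions, cf.\ (\ref{EqnDecompTangentSpaceCompJac})), hence $\im(d\pi_x)$ has dimension $2p_a-(p_a+\delta)=p_a-\delta$ and equals $T_{V,[C]}$, not $T_{[C]}|\calL|$ (this is exactly \cref{LemmaImageDiffIsTangV} in the paper). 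So your ``analogous sequence for $T_xM$'' with right-hand term $T_{[C]}|\calL|$ is not exact, the tangent--cotangent duality $T_xF\simeq T^*_{[C]}|\calL|$ fails ($p_a+\delta\neq p_a$), and the quotient $\Ext^1_C(\nu_*L,\nu_*L)/H^1(\calO_{C^\nu})$ is $2\delta$-dimensional, not $\delta$-dimensional, so it cannot be canonically dual to the $\delta$-dimensional space $T_{[C]}|\calL|/T_{[C]}V$. No node-by-node Grothendieck duality computation can repair a mismatch of ranks. A related (smaller) imprecision: since $N_C=\Pic^0(C^\nu)$ lies entirely in the singular locus of the fibre, ``contained in a Lagrangian fibre, hence isotropic'' is not automatic; one needs Matsushita's statement that $\sigma$ pulls back to zero on a \emph{resolution} of the singular fibre (\cref{LemmaPullbackSympFormOnResolutionFibreIsTrivial}), not just that smooth fibres are Lagrangian.

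The paper's route around this is the explicit description of the tangent cone of $\CPic^\delta(C)$ at $[\nu_*L]$ via the $(\matP^1)^\delta$-resolution $\mathsf{sm}\colon P^{\sm}\to\CPic^\delta(C)$: the $2\delta$ extra directions $t_i$ and every $u\in H^1(\calO_{C^\nu})$ lift to tangent vectors at a common preimage $[F_i]\in P^{\sm}$ (\cref{RmkSummandCnuComesFromAnySection}), so $\sigma_M(t_i,u)=\mathsf{sm}^*\sigma_M(t_i',u_i')=0$ by \cref{LemmaPullbackSympFormOnResolutionFibreIsTrivial}. This gives $T_xF\subseteq (T_xN_C)^\perp$, equality by dimension count, and hence a perfect pairing $H^1(\calO_{C^\nu})\times T_{V,[C]}\to\matC$ induced by $\sigma_M$ via $T_xM/T_xF\simeq\im(d\pi_x)=T_{V,[C]}$. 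If you want to keep the flavour of your argument, you must replace the base of your second sequence by $\im(d\pi_x)=T_{V,[C]}$ and supply an independent proof that the $2\delta$ node-local directions are $\sigma$-orthogonal to $H^1(\calO_{C^\nu})$; as it stands, that is the missing ingredient.
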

However, there is no evidence for the existence of a smooth compactification of $N$, as $V$ has no reason, a priori, to be rational.

Assume now that $p\colon S\to \matP^2$ is a double cover ramified over a smooth sextic $D_6$, and fix $\calL=\calO_S(d)\coloneqq p^*\calO_{\matP^2}(d)$ for some $d$.
For appropriate choices of $d$ and $\delta$, consider the locally closed subspace $\calV\subset |\calO_{\matP^2}(d)|$ parametrizing smooth plane curves of degree $d$ which intersect $D_6$ in $\delta$ points of multiplicity $2$. One can prove that the preimages by $p$ of these curves are $\delta$-nodal in $S$, so that $\calV$ is identified with a locally closed sublocus $\calV\simeq p^*\calV \subset V$ of positive codimension.
For $C\in \calV$, the map $p$ induces an étale double cover $C^\nu \to D$. One can associate to this cover the \textit{Prym variety} $\Prym(C^\nu/D)\subset \Pic^0(C^\nu)$, defined as the abelian subvariety consisting of line bundles fixed by $\tau\coloneqq \iota^*\circ (-)^\vee$, where $\iota\in\Aut(C^\nu)$ denotes the covering involution. The map $\tau$ can be defined in a relative way over $\calV$, leading to a relative Prym variety $\Prym_\calV(\calC^\nu/\calD)$, where $\calD$ denotes the induced family of curves in $\matP^2$.

On the other hand, there exists a symplectic involution $\tau_{M}$ on $M$ over $|\calO_S(d)|$, given by the composition of the cover involution of $S\to \matP^2$ and a relative (twisted) version of $(-)^\vee$. We prove that $\tau_M$ preserves $N$, and we obtain the following result.

\begin{theorem*}[=\cref{ThmPrymSubfibrationOverNodalCurves}]
		The map $\Upsilon\colon \Pic^0_V(\calC^\nu) \xrightarrow{\sim} N$ restricts to an isomorphism
		$$\Prym_{\calV}(\calC^\nu/\calD) \xrightarrow{\sim} P \coloneqq \Fix^0(N)_\calV,$$
		where $\Fix^0(N)$ is one connected component of the fixed locus of $\tau_M|_N$. In particular, $P$ is symplectic and smooth, and $\pi|_{P}\colon P \to \calV$ is a Lagrangian subfibration of $\pi$.
\end{theorem*}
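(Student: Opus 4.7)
The plan is to transfer the fixed-locus analysis from $N \subset M$ back to $\Pic^0_V(\calC^\nu)$ via $\Upsilon$, where the Prym involution is manifest, and then to invoke general facts about symplectic involutions.

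First I would compute the involution $\Upsilon^{-1} \circ \tau_M|_N \circ \Upsilon$ on the relative Jacobian. Given $D \in \calV$ and $C = p^{-1}(D)$, the cover involution $\sigma$ of $p \colon S \to \matP^2$ restricts to an involution of $C$ that lifts to the deck involution $\iota \colon C^\nu \to C^\nu$ of $p|_{C^\nu}\colon C^\nu \to D$. Since $\sigma$ and $\iota$ are isomorphisms, base change along $\nu$ yields $\sigma^* \nu_* L = \nu_* \iota^* L$ for every $L \in \Pic^0(C^\nu)$. A standard Grothendieck-duality computation along the finite morphism $\nu$, combined with the fixed line-bundle twist used to make $\tau_M$ preserve the Mukai vector $v = (0,[\calL],\chi)$, shows that the dualizing part of $\tau_M$ pulls back along $\Upsilon$ to $L \mapsto L^\vee \otimes \lambda$, where $\lambda$ is a section of $\Pic^0_V(\calC^\nu)$ over $V$. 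Absorbing this relative translation into the identification (equivalently, post-composing $\Upsilon$ with a relative translation) gives $\Upsilon^{-1} \circ \tau_M|_N \circ \Upsilon = \tau$, where $\tau = \iota^* \circ (-)^\vee$ is the classical Prym involution on $\Pic^0_V(\calC^\nu)$.

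Next, the fixed locus of $\tau$ on $\Pic^0(C^\nu)$ equals $\ker(1 + \iota^*)$, whose identity component is by definition $\Prym(C^\nu/D)$ and whose other components are $\Prym$-torsors indexed by a finite group. In families over $\calV$ this assembles into a decomposition of the relative fixed locus $\Fix(\tau)$ into finitely many connected components, one of which is the identity component $\Prym_\calV(\calC^\nu/\calD)$. Declaring $\Fix^0(N)$ to be the connected component of $\Fix(\tau_M|_N)$ corresponding to $\Prym_\calV$ under $\Upsilon$ yields the claimed isomorphism $\Prym_\calV(\calC^\nu/\calD) \xrightarrow{\sim} P$. The smoothness and symplectic property of $P$ then follow from the standard fact that the fixed locus of a symplectic involution on a smooth symplectic variety is smooth and symplectic: by \cref{ThmSubfibrationJacobianNormalizations}, $N$ is smooth and symplectic; the ambient involution $\tau_M$ on $M$ is symplectic and preserves $N$; diagonalizing the induced $\matZ/2$-action on tangent spaces at fixed points into $\pm 1$-eigenspaces and using that the symplectic form pairs opposite eigenspaces non-degenerately shows every component of the fixed locus is smooth and symplectic, in particular so is $P$.

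Finally, the Lagrangian property of $\pi|_P \colon P \to \calV$ reduces to a dimension count. Fibers of $\pi|_P$ are automatically isotropic because they are contained in the Lagrangian fibers of $\pi|_N$, and a direct computation gives $\dim \Prym(C^\nu/D) = g(C^\nu) - g(D) = (d^2 + 1 - \delta) - (d-1)(d-2)/2 = d(d+3)/2 - \delta = \dim \calV$, so the fibers have exactly half the dimension of $P$. The main obstacle is the first step: executing the Grothendieck-duality computation for $\tau_M$ on sheaves of the form $\nu_* L$ on the singular curve $C$, and verifying that the fixed twist needed to preserve the Mukai vector descends on $\Pic^0(C^\nu)$ to an honest translation by a relative section, rather than to something carrying a genuine obstruction to recovering the classical Prym involution on the nose.
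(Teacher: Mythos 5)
Your overall strategy matches the paper's: transport $\tau_M|_N$ to $\Pic^0_V(\calC^\nu)$ via $\Upsilon$, identify it with the Prym involution, invoke the smooth/symplectic fixed-locus fact, and finish with a dimension count. But there is a genuine gap exactly where you flag "the main obstacle", and your proposed workaround does not repair it. You allow the transported involution to be $L\mapsto \iota^*L^\vee\otimes\lambda$ for some section $\lambda$ of $\Pic^0_V(\calC^\nu)$ and propose to absorb $\lambda$ by post-composing $\Upsilon$ with a relative translation $t_\mu$. First, conjugating by $t_\mu$ turns the involution into $L\mapsto \iota^*L^\vee\otimes\lambda\otimes(\mu\otimes\iota^*\mu)^\vee$, so you would need $\lambda$ to lie in the image of the norm-type map $\mu\mapsto\mu\otimes\iota^*\mu$, which is not automatic. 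Second, and more importantly, the statement asserts that $\Upsilon$ \emph{itself} restricts to the isomorphism, and $\Fix^0(N)$ is defined as the component containing the image of the zero section $\calO_{C^\nu}\mapsto\nu_*\calO_{C^\nu}$; if $\lambda$ were nontrivial that point would not even be fixed by $\tau_M$, and $\Fix^0(N)$ as defined would not make sense. So the triviality of $\lambda$ is not a cosmetic normalization but the crux. The paper proves it as Lemma \ref{LemmaExt1IsPushforwardHom}: writing $\mathsf{n}=i_C\circ\nu$, Grothendieck--Verdier duality gives $\calE xt^1(\mathsf{n}_*\calL,\calO_S(\Gamma-C))\simeq\mathsf{n}_*\calH om(\calL,\omega_{C^\nu}\otimes\mathsf{n}^*\calO_S(\Gamma-C))$, and the computation $\nu^*\omega_C\simeq\omega_{C^\nu}(\sum_i x_i+y_i)$ (from the conductor sequence (\ref{EqnExSeqDualSheafNodalCurve})) together with $\mathsf{n}^*\calO_S(\Gamma)\simeq\calO_{C^\nu}(\sum_i x_i+y_i)$ and $i_C^*\calO_S(C)\simeq\omega_C$ shows the twist by $\calO_S(\Gamma-kH_S)$ cancels exactly, yielding $\mathsf{n}_*\calH om(\calL,\calO_{C^\nu})$ with no residual translation. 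This is the computation your proof defers, and it is where the ramification divisor $\Gamma$ in the definition of $\kappa$ earns its keep.

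A second, smaller omission: $\tau_N$ and its fixed locus live over all of $V$, not just over $\calV$, so "declaring $\Fix^0(N)$ to be the component corresponding to $\Prym_\calV$" does not by itself show that $\Fix^0(N)_\calV$ equals $\Upsilon(\Prym_\calV)$ or that it is open (hence smooth and symplectic) in $\Fix^0(N)$; a priori the component could dominate a strictly larger locus $T=\pi(\Fix^0(N))\supsetneq\calV$ in $V$. The paper's Step 3 handles this with an isotropy/dimension argument showing $\dim T=\dim\calV$, so that $\calV$ is open in $T$. Your fibrewise dimension count $\dim\Prym(C^\nu/D)=(1/2)(k^2-3k)=\dim\calV$ is correct and is the right input for the Lagrangian conclusion, but you should also use it (or an argument like the paper's) to pin down $\pi(\Fix^0(N))$.
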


A sum-up of these constructions are presented in \cref{RmkSumUpConstructionsFibrations}.
In many cases, the subspace $\calV\subset V$ identifies with an open $\calV\subset \matP^m$ in a projective space non-linearly embedded in $|\calL|$ (see for instance \cref{PropVIsOpenInP5VeroLinEmbed}). 

\subsection*{Structure of the paper}

In \S \ref{SectionCompJacNodalCurves}, we collect some results about the compactified Jacobian of nodal curves, and we describe its tangent space along its singular locus.
In \S \ref{SectionCompactifiedJacobianLagrangianFibrations}, we construct the varieties $N$ and $P$, and prove that they are symplectic in $M$. The main construction is stated in \cref{ThmPrymSubfibrationOverNodalCurves}.
In \S \ref{SectionGeomSettings}, we first introduce the geometric setup and some notations in order to properly state \cref{ThmMainPrymSubFibration}. Then, we prove some generality assumptions required throughout the text.
In \S \ref{sectionConstructionf}, we construct the map between the relative intermediate Jacobian $\calJ$ and the Beauville--Mukai system $M$, and we describe its image. We prove \cref{ThmMainPrymSubFibration} at the end of the section, using the construction of \S \ref{SectionSubfibPrymVar}.
In \S \ref{SectionQuestions}, we discuss some related works, questions and possible directions.

\subsection*{Acknowledgements}

I am deeply grateful to Daniel Huybrechts, who initiated this project, for his precious comments, advice and suggestions. I would like to thank Yajnaseni Dutta, Yoonjoo Kim, Andrés Rojas and Mauro Varesco for many helpful conversations, as well as the anonymous referee for many exposition suggestions and corrections. This research was funded by ERC Synergy Grant HyperK, Grant agreement ID 854361.

\section{Compactified Jacobians of nodal curves}\label{SectionCompJacNodalCurves}

\subsection{General results}

Let $f\colon X\to B$ be a locally projective, finitely presented, and flat morphism of schemes, whose geometric fibres are integral curves. We assume that all geometric fibres have the same arithmetic genus $p_a$. The (sheafified) relative Picard functor $(\Pic_{X/B})_{\et}$ is represented by a $B$-scheme (denoted $\Pic_B(X)$) which is smooth of relative dimension $p_a$. However, it is not projective over the base in general. Therefore, we define the \textit{compactified} Picard functor by allowing rank $1$ non-locally free torsion-free sheaves. More precisely, define the functor $\CPic_{X/B}$ from the category of $B$-schemes to the category of sets as:
\[ \CPic_{X/B}(T)\coloneqq\{\calI \text{ relative torsion-free rank } 1 \text{ sheaf on } X_T\} /{\Pic(T)}.
\]

Defining the \textit{degree} of a rank $1$ torsion-free sheaf $\calI$ as $\deg(\calI)=\chi(\calI)-\chi(\calO_{X_{\bar{s}}})$ (for a geometric fibre $X_{\bar{s}})$, we obtain subfunctors $\CPic^n_{X/B}$ parametrizing families of degree $d$ rank $1$ torsion-free sheaves, and (the scheme representing) $\CPic^0_{X/B}$ is called the \textit{relative compactified Jacobian}. The existence of the compactified Jacobian have been investigated by several authors (D'Souza, Esteves, Rego...). The general representability result is due to Altman and Kleiman, see \cite[Thm. 8.1, 8.5]{AltmanKleimanCompPicScheme}.

\begin{theorem}
	The functor $(\CPic_{X/B})_{\et}$ is representable by a $B$-scheme that we denote $\CPic_B(X)$, called the \textit{relative compactified Picard scheme}. Each subscheme $\CPic^d_B(X)$ is finitely presented and locally projective over $B$.
\end{theorem}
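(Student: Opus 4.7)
The plan is to realize $\CPic_B(X)$ as a quotient of a relative Quot scheme, following the Altman--Kleiman strategy. The first step is to establish a uniform boundedness result: for a fixed $f$-ample line bundle $\calO_X(1)$ and a fixed degree $d$, there exists an integer $n_0$ such that for all $n \geq n_0$ and every geometric fibre $X_s$, every rank $1$ torsion-free sheaf $\calI$ of degree $d$ on $X_s$ is $n$-regular. In particular $\calI(n)$ is globally generated with $h^0(\calI(n)) = N$ and Hilbert polynomial $P$ depending only on $n$ and $d$. This boundedness, which relies on controlling sections of torsion-free rank $1$ sheaves on integral curves, is the technical heart of the argument.

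The uniform bound yields, for each such $\calI$, a surjection $\calO_X^N \twoheadrightarrow \calI(n)$, unique up to the action of $\GL_N$ on $\calO_X^N$. Hence $\CPic^d_{X/B}$ identifies with the étale sheafification of $Q'/\PGL_N$, where $Q' \subset \Quot_{X/B}(\calO_X^N, P)$ is the open subscheme parametrizing quotients that are rank $1$ torsion-free of degree $d$, and where the center $\matG_m \subset \GL_N$ acts trivially on the moduli problem so that only $\PGL_N$ acts effectively. By Grothendieck's construction, $\Quot_{X/B}(\calO_X^N, P)$ is finitely presented and projective over $B$, and openness of $Q'$ follows from the fact that being torsion-free of prescribed rank and degree is an open condition on fibres.

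The main obstacle is forming the $\PGL_N$-quotient of $Q'$ and controlling its geometry over $B$. The action is free because an automorphism of a rank $1$ torsion-free sheaf on an integral curve is a scalar, so $Q' \to Q'/\PGL_N$ becomes a $\PGL_N$-torsor in the étale topology; the représentability of the étale sheafification follows from this, and also explains why the sheafification is needed (a universal family only exists étale-locally, owing to the Brauer obstruction). To get local projectivity of $\CPic^d_B(X) \to B$, one applies a GIT-type argument using a $\PGL_N$-linearized ample line bundle on $Q'$: since all points parametrize sheaves with trivial stabilizers, every point of $Q'$ is stable, and the geometric quotient exists locally on $B$ as a projective scheme. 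Finite presentation is inherited from the Quot scheme, and gluing the local quotients via the étale descent gives the global scheme $\CPic_B(X) = \bigsqcup_d \CPic^d_B(X)$.
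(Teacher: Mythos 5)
The paper offers no proof of this theorem: it is quoted directly from Altman--Kleiman \cite{AltmanKleimanCompPicScheme} (their Theorems 8.1 and 8.5), so your proposal can only be measured against the standard argument of that reference. Your outline --- uniform $n$-regularity of degree-$d$ rank $1$ torsion-free sheaves on the integral fibres, the resulting presentation of $\CPic^d_{X/B}$ as the étale sheafification of $Q'/\PGL_N$ for an open subscheme $Q'$ of a relative Quot scheme, and a quotient construction giving local projectivity over $B$ --- is exactly the Quot-scheme strategy that underlies the cited result, so the route is the right one and matches the source in outline.

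Two steps in your write-up do not hold as stated. First, ``since all points parametrize sheaves with trivial stabilizers, every point of $Q'$ is stable'' is a non sequitur: GIT stability for the natural linearization on the Quot scheme is characterized by a Gieseker--Simpson-type inequality on quotient sheaves, not by the size of stabilizers (a finite stabilizer without a closed orbit in the semistable locus gives at best semistability). The correct input is that a pure rank $1$ sheaf supported on an integral curve admits no destabilizing quotient, hence is automatically stable --- this is precisely \cref{RmkPureDim1Rk1SheafIrredCurveIsStable} of the paper --- and only then does stability together with trivial stabilizers yield a geometric, projective quotient locally over $B$. Second, freeness of the $\PGL_N$-action gives the quotient a priori only as an algebraic space; representability by a \emph{scheme} is the actual content of the theorem and cannot be deduced from freeness of the action alone. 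The logic should therefore run in the opposite order from the one you wrote: construct the (local-on-$B$) GIT quotient as a projective scheme first, and then identify it with the étale sheafification of $\CPic^d_{X/B}$, rather than asserting representability from the torsor property and invoking GIT only afterwards for projectivity.
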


\begin{remark}\label{RmkSectionAllPicdAreIsomorphic}
	If $f\colon X\to B$ admits a section $\sigma\colon B\to X$, then all the $\Pic^d_B(X)$ (resp. $\CPic^d_B(X)$) are isomorphic to $\Pic^0_B(X)$ (resp. $\CPic^0_B(X)$), the isomorphism being given by the tensor product with $\calO_X(-d \cdot \sigma(B))$, which is a line bundle on $X$ of fibre degree $-d$. 
\end{remark}

Of course, $\Pic_B(X)=\CPic_B(X)$ when $f$ is smooth. For curves lying in surfaces, we can say more.

\begin{theorem}[\cite{AltmanIarrobinoKleimanIrredCompJac}]
	Suppose $X\to \Spec k$ is a proper integral curve of arithmetic genus $g$ over an algebraically closed field that lies on a smooth surface. Then the compactified Jacobian $\CPic^0_B(X)$ is an integral scheme of dimension $g$. It is both Cohen-Macaulay and a local complete intersection, and it contains $\Pic^0_B(X)$ as a dense open subscheme.
\end{theorem}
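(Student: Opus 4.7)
The plan is to relate $\CPic^0(X)$ to the Hilbert scheme of $n$ points on $X$ via an Abel--Jacobi morphism, and to exploit the smoothness of the ambient surface to control the geometry of this Hilbert scheme.

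First, embed $X$ into the smooth surface $Y$ and study $\Hilb^n(X) \subset \Hilb^n(Y)$ for $n \geq 1$. By Fogarty's theorem $\Hilb^n(Y)$ is smooth of dimension $2n$, and $\Hilb^n(X)$ is locally the zero scheme of a section of the tautological rank-$n$ bundle on $\Hilb^n(Y)$ associated to $\calO_Y(X)$. Hence $\Hilb^n(X)$ is a local complete intersection of pure dimension $n$, in particular Cohen--Macaulay. Since $X$ is integral, the open locus of length-$n$ subschemes contained in the smooth locus $X_{\sm}$ is itself smooth and dense, so $\Hilb^n(X)$ is generically reduced and therefore integral.

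Second, consider the Abel--Jacobi morphism
$$\alpha_n \colon \Hilb^n(X) \longrightarrow \CPic^{-n}(X), \quad Z \longmapsto I_Z \subset \calO_X.$$
Since $X$ is a Cartier divisor in the smooth surface $Y$ it is Gorenstein, so every torsion-free rank-$1$ sheaf $\calI$ on $X$ is reflexive. The fibre of $\alpha_n$ over $[\calI]$ parametrizes injections $\calI \hookrightarrow \calO_X$ modulo scalars, and any nonzero map from a torsion-free sheaf into $\calO_X$ is automatically injective; hence the fibre equals $\matP(\Hom(\calI, \calO_X)) = \matP(H^0(X, \calI^\vee))$. For $n \gg 0$ a boundedness argument produces uniform Serre vanishing $H^1(X, \calI^\vee) = 0$, so Riemann--Roch gives $h^0(\calI^\vee) = n + 1 - g$ and every fibre is a $\matP^{n-g}$. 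A Grassmannian-type construction then exhibits $\alpha_n$ as a Zariski-locally trivial $\matP^{n-g}$-bundle.

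Third, descend the properties. The projective-bundle structure yields that $\CPic^{-n}(X)$ is integral of dimension $g$, and that the Cohen--Macaulay and local complete intersection properties of $\Hilb^n(X)$ transfer to $\CPic^{-n}(X)$. Tensoring with $\calO_X(nP)$ for a smooth point $P \in X_{\sm}$ (as in \cref{RmkSectionAllPicdAreIsomorphic}) then gives an isomorphism $\CPic^{-n}(X) \simeq \CPic^0(X)$, and the claims follow. Density of $\Pic^0(X)$ in $\CPic^0(X)$ corresponds under $\alpha_n$ to density of subschemes of $X_{\sm}$ in $\Hilb^n(X)$, which is immediate.

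The main obstacle is the uniform Serre vanishing: showing that $H^1(X, \calI^\vee) = 0$ holds simultaneously for every torsion-free rank-$1$ sheaf $\calI$ of sufficiently negative degree. This requires a boundedness statement for the whole family of such sheaves and is the technical heart of the argument; once it is in place, the exhibition of $\alpha_n$ as a projective bundle and the descent of all stated properties become routine.
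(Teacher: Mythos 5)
The paper offers no proof of this statement---it is quoted from Altman--Iarrobino--Kleiman---so the only meaningful comparison is with the original argument of that reference. Your strategy (realize $\Hilb^n(X)$ inside the smooth $2n$-dimensional $\Hilb^n(Y)$ as the zero locus of a section of the rank-$n$ tautological bundle of $\calO_Y(X)$, then descend everything along the Abel map $\alpha_n\colon \Hilb^n(X)\to\CPic^{-n}(X)$ for $n\gg 0$ and translate back to degree $0$) is precisely the AIK strategy, and the descent steps you describe are sound.

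However, there is a genuine gap, and it sits exactly where you wave your hands. From the zero-locus description you may only conclude that every component of $\Hilb^n(X)$ has dimension $\geq n$; to get \emph{pure} dimension $n$ (hence l.c.i., hence Cohen--Macaulay) and to get irreducibility, you must bound the bad locus from above, i.e. show that the locus of subschemes whose support meets $X_{\mathrm{sing}}$ has dimension $<n$. This is not ``immediate'': it rests on the Brian\c{c}on--Iarrobino bound $\dim \Hilb^n_p(Y)= n-1$ for the punctual Hilbert scheme of a smooth surface (whence $\dim\Hilb^n_p(X)\leq n-1$ for a planar curve singularity), and it is the actual technical heart of the AIK paper. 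Your claims ``$\Hilb^n(X)$ is a local complete intersection of pure dimension $n$'', ``the smooth-support locus is dense'', and the final density of $\Pic^0$ all silently invoke this estimate. Conversely, the step you single out as the main obstacle---uniform vanishing of $H^1(X,\calI^\vee)$---is comparatively easy: since $X$ is Gorenstein and $\calI$ is reflexive, Serre duality gives $H^1(X,\calI^\vee)^\vee\simeq H^0(X,\calI\otimes\omega_X)$, which vanishes for every rank-$1$ torsion-free $\calI$ of degree $-n$ once $n>2g-2$, because a rank-$1$ torsion-free sheaf of negative degree on an integral curve has no nonzero sections (a section would give $\calO_X\hookrightarrow\calI\otimes\omega_X$ and force $\chi\geq\chi(\calO_X)$). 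No boundedness is needed there. So the architecture is right, but the load-bearing lemma is missing and the difficulty is misattributed.
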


\subsection{Nodal curves}\label{SectionNodalCurves}

In this section, we gather some facts about the compactified Jacobian $\CPic^0(C)$ of a nodal curve $C$, following Kleppe's Ph.D. thesis \cite{KleppePhDThesisPicSchemeCurveComp}. Our goal is to describe the tangent space of $\CPic^0(C)$ at its singular points (see (\ref{EqnDecompTangentSpaceCompJac})).

Let $C$ be a nodal curve (by \cite[Corollary 9]{AltmanKleimanBertiniThms}, a curve with singularities of embedded dimension $\leq 2$ always lies in a smooth surface) of arithmetic genus $p_a$, with nodes $z_1,\dots,z_\delta\in C$, $\delta>0$. The normalization map $C^\nu \xrightarrow{\nu} C$ decomposes as a sequence
$$C^\nu=C_\delta \xrightarrow{\nu_\delta} C_{\delta-1} \to \cdots \xrightarrow{\nu_2} C_1 \xrightarrow{\nu_1} C_0=C$$
where each map is the partial desingularization of a single node. We let $\nu_i^{-1}(z_i)=\{x_i,y_i\}$.

\begin{theorem}[\cite{KleppePhDThesisPicSchemeCurveComp}, Theorem 3.2.1]
	Each $\nu_i\colon C_i \to C_{i-1}$, $i=1,\dots,\delta$, gives an exact sequence
	$$0 \to \matG_m \to \Pic^0(C_{i-1}) \xrightarrow{\nu_i^*} \Pic^0(C_i) \to 0.$$
\end{theorem}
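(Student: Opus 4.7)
The plan is to derive the claimed sequence from a short exact sequence of sheaves on $C_{i-1}$ that encodes how the partial normalization $\nu_i$ resolves the node $z_i$, and then pass to cohomology. Since $\nu_i$ is finite, birational, and an isomorphism outside $z_i$, and since the completed local rings at the node are $\widehat{\calO}_{C_{i-1},z_i}\simeq k[[x,y]]/(xy)$ and $(\nu_{i,*}\widehat{\calO}_{C_i})_{z_i}\simeq k[[t_1]]\oplus k[[t_2]]$, the cokernel of $\calO_{C_{i-1}}\hookrightarrow \nu_{i,*}\calO_{C_i}$ is a skyscraper at $z_i$ with stalk $k$. Taking units therefore yields a short exact sequence of sheaves of abelian groups on $C_{i-1}$,
$$1 \to \calO_{C_{i-1}}^{*} \to \nu_{i,*}\calO_{C_i}^{*} \to (\matG_m)_{z_i} \to 1,$$
where $(\matG_m)_{z_i}$ denotes the skyscraper at $z_i$ with stalk $k^*$, corresponding to the ratio of unit values at the two preimages $x_i,y_i$.

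Next, I would take the long exact sequence in \'etale cohomology (so that $H^1(\calO^*)=\Pic$), using that $\nu_i$ is affine (hence $H^1(C_{i-1},\nu_{i,*}\calO_{C_i}^*)=\Pic(C_i)$ by Leray) and that $H^2(C_{i-1},\calO^*)=\Br(C_{i-1})=0$ by Tsen's theorem. This gives the six-term sequence
$$1 \to H^0(\calO_{C_{i-1}}^*) \to H^0(\calO_{C_i}^*) \to k^* \to \Pic(C_{i-1}) \xrightarrow{\nu_i^*} \Pic(C_i) \to 0.$$
Since $C_{i-1}$ and $C_i$ are both proper and integral (the partial normalization of an integral curve is again integral), $H^0(\calO^*)=k^*$ on both sides and the first map is the identity. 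Extracting the relevant piece yields the exact sequence
$$0 \to \matG_m \to \Pic(C_{i-1}) \xrightarrow{\nu_i^*} \Pic(C_i) \to 0.$$

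To pass from $\Pic$ to $\Pic^0$, observe that $\matG_m$ is connected, so the inclusion $\matG_m\hookrightarrow \Pic(C_{i-1})$ factors through the identity component $\Pic^0(C_{i-1})$. The standard fact that for a connected closed subgroup scheme $T$ of a commutative algebraic group $G$ one has $(G/T)^0=G^0/T$ then restricts the previous sequence to the desired
$$0 \to \matG_m \to \Pic^0(C_{i-1}) \xrightarrow{\nu_i^*} \Pic^0(C_i) \to 0.$$

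The main subtlety is ensuring that the sequence produced by cohomology lives in the category of commutative group schemes rather than merely of abstract groups of $k$-points, so that the last step on identity components is actually justified; the cleanest remedy is to work throughout with the Picard functor viewed as an fppf sheaf and to derive the long exact sequence there. The integrality of each $C_i$ (inherited from that of $C$) is also essential, both to pin down $H^0(\calO_{C_i}^*)=k^*$ and to ensure that $\Pic^0$ behaves well with respect to the quotient.
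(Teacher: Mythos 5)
The paper does not prove this statement; it is quoted verbatim from Kleppe's thesis, so there is no internal proof to compare against. Your argument is the standard one for computing Picard groups of curves with ordinary singularities (units sequence of the partial normalization plus cohomology), and it is correct. Two small points of hygiene: the exact sequence of unit sheaves is not literally obtained by ``taking units'' of the additive conductor sequence --- exactness has to be checked directly from the local description $\widehat{\calO}_{C_{i-1},z_i}\simeq k[[x,y]]/(xy)\hookrightarrow k[[t_1]]\times k[[t_2]]$, which your stalk computation does supply; and invoking Tsen for $H^2_{\et}(C_{i-1},\matG_m)=0$ requires the extension of that vanishing from smooth to singular proper curves (or, more elementarily, surjectivity of $\nu_i^*$ on $\Pic$ follows by gluing the fibres of a line bundle at $x_i$ and $y_i$). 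For the descent from $\Pic$ to $\Pic^0$, since the curves are integral and $\nu_i$ is birational, $\nu_i^*$ preserves degree, which gives the restriction of the sequence to degree-zero parts directly and sidesteps the identity-component formalism; your route via $(G/T)^0=G^0/T$ also works, and your closing caveat about upgrading the sequence of $k$-points to one of group schemes is the right thing to flag (harmless here, as everything is smooth in characteristic $0$).
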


Since extension of tori by tori are tori, in particular for any composition $\nu_{m,l}=\nu_m \circ \cdots \circ \nu_{l+1} \colon  C_m \to C_l$ (with $m>l$) we obtain the exact sequence
$$0 \to (\matG_m)^{m-l} \to \Pic^0(C_l) \to \Pic^0(C_m) \to 0.$$
More precisely, the data of a line bundle on $\Pic^0(C_l)$ is equivalent to the data of a line bundle $\calL\in \Pic^0(C_m)$ and scalars $\lambda_{1},\dots,\lambda_{m-l}$, where $\lambda_i$, $i=1,\dots,m-l$, identifies the fibres $\calL_{x_i}$ and $\calL_{y_i}$.

The compactification $\CPic^0(C)$ can be constructed (at least topologically) by induction from $\CPic^0(C^\nu)=\Pic^0(C^\nu)$ as follows. For each $i=1,\dots,\delta$, the $\matG_m$-bundle $\Pic^0(C_{i-1})\to \Pic^0(C_{i})$ admits a compactification as a $\matP^1$-bundle $\Pres_i \to \Pic^0(C_{i})$ by adding two sections $s_0$ and $s_\infty$.

\begin{remark}
    The construction of $\Pres_i$ is functorial, see \cite[Chap. II]{KleppePhDThesisPicSchemeCurveComp} for the definition of the \textit{presentation functor} $\Pres_{C_{i}/C_{i-1}}$.
\end{remark}

\begin{theorem}[\cite{KleppePhDThesisPicSchemeCurveComp}, Lemma 7.2.1]
	For any $i=1,\dots,\delta$, the underlying topological space of the compactified Jacobian $\CPic^0(C_{i-1})$ is obtained from $\Pres_{i} \to \Pic^0(C_i)$ by identifying the sections $s_0$ and $s_\infty$ via the translation by the line bundle $\calO_{C_{i}}(x_i-y_i)$\footnote{See \cite[Figure 1]{SawonSingularFibresVeryGeneralLagFib} for a nice picture.}.
\end{theorem}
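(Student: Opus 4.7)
The plan is to construct an explicit morphism $\Phi\colon \Pres_i \to \CPic^0(C_{i-1})$ and recognize $\CPic^0(C_{i-1})$ as the topological quotient of $\Pres_i$ by the desired identification. First I would record the set-theoretic picture. Since the completed local ring $\widehat{\calO}_{C_{i-1},z_i} \simeq k[[u,v]]/(uv)$ admits only two torsion-free rank-one modules up to isomorphism — the free one and the maximal ideal — every rank-one torsion-free sheaf on $C_{i-1}$ is either locally free or of the form $(\nu_i)_*\calM$ for a unique line bundle $\calM$ on $C_i$. A Riemann--Roch computation using $p_a(C_i)=p_a(C_{i-1})-1$ forces $\deg\calM = -1$ when the sheaf has degree $0$, giving the decomposition
\[
\CPic^0(C_{i-1}) \;=\; \Pic^0(C_{i-1}) \;\sqcup\; \Pic^{-1}(C_i).
\]

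Define $\Phi(\calL',[a{:}b])$ to be the kernel of the evaluation
\[
(\nu_i)_*\calL' \longrightarrow k(z_i), \qquad (s_{x_i}, s_{y_i}) \longmapsto a\,s_{x_i} + b\,s_{y_i}.
\]
On the open locus $\{ab\neq 0\}$, this kernel is a line bundle on $C_{i-1}$ with pull-back $\calL'$ and gluing parameter $\lambda = -a/b$, recovering the identification $\Pres_i \setminus (s_0 \cup s_\infty) \simeq \Pic^0(C_{i-1})$. At $s_0 = (\calL', [0{:}1])$ the kernel is the subsheaf of sections of $(\nu_i)_*\calL'$ vanishing at $y_i$, namely $(\nu_i)_*(\calL'(-y_i))$; similarly at $s_\infty = (\calL', [1{:}0])$ it equals $(\nu_i)_*(\calL'(-x_i))$. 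Both sections therefore surject onto the non-locally-free locus $\Pic^{-1}(C_i)$, and $\Phi(s_0(\calL'))=\Phi(s_\infty(\calL''))$ if and only if $\calL'(-y_i)\simeq\calL''(-x_i)$, equivalently $\calL'' = \calL' \otimes \calO_{C_i}(x_i - y_i)$. This is exactly the translation demanded by the theorem.

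To conclude, $\Phi$ is surjective by the set-theoretic dichotomy above, injective on the quotient by the fibre computation, and proper because $\Pres_i$ is projective over $\Pic^0(C_i)$ while $\CPic^0(C_{i-1})$ is separated. A proper continuous bijection to a separated space is automatically a homeomorphism, so $\Phi$ induces the claimed identification. The main technical point I foresee is ensuring that $\Phi$ is a morphism of schemes, not only a set-theoretic map: this reduces to flatness of the family of kernels over $\Pres_i$, which is automatic since the cokernel is a flat family of length-one skyscrapers along the section of $\Pres_i \times C_{i-1} \to \Pres_i$ through $z_i$. Beyond this bookkeeping I do not anticipate a substantive obstacle.
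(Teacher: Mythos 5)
The paper offers no proof of this statement: it is imported verbatim from Kleppe's thesis, so there is nothing internal to compare against, and your kernel-of-the-evaluation construction $\Phi(\calL',[a{:}b])=\ker\bigl((\nu_i)_*\calL'\to k(z_i)\bigr)$ is indeed the right local mechanism at the node $z_i$. The computation of the fibres of $\Phi$ over $\{ab\neq 0\}$, over $s_0$, and over $s_\infty$, the resulting identification $s_0(\calL')\sim s_\infty(\calL'\otimes\calO_{C_i}(x_i-y_i))$, and the "proper continuous bijection onto a separated space" endgame are all correct. As written, however, your argument is a complete proof only in the case $i=\delta$, i.e.\ when $C_{i-1}$ has a single node.

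The gap is in the opening dichotomy. For $i<\delta$ the curve $C_{i-1}$ still carries the nodes $z_{i+1},\dots,z_\delta$, which $\nu_i$ does not touch, so it is false that every rank-one torsion-free sheaf of degree $0$ on $C_{i-1}$ is either a line bundle or $(\nu_i)_*$ of a line bundle on $C_i$: a sheaf can be locally free at $z_i$ yet fail to be locally free at some $z_j$, $j>i$ (then it lies in neither $\Pic^0(C_{i-1})$ nor the image of $(\nu_i)_*$ applied to line bundles), and a sheaf that is not locally free at $z_i$ is $(\nu_i)_*\calM$ for a torsion-free $\calM$ that need not be invertible away from $x_i,y_i$. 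The correct decomposition, governed by the local behaviour at $z_i$ alone, is
\[
\CPic^0(C_{i-1}) \;=\; \bigl(\matG_m\text{-torsor over }\CPic^0(C_i)\bigr)\;\sqcup\;\CPic^{-1}(C_i),
\]
so your $\Phi$, defined only over $\Pic^0(C_i)$, is not surjective and the quotient you build is a proper open subset of $\CPic^0(C_{i-1})$. The statement in the paper is itself loosely phrased on this point; its intended reading (and the one the paper actually uses, e.g.\ in the proof of \cref{LemmaSmoothResolutionPdeltaFibration}, where $\Pres_1\to\CPic^0(C_1)$ appears as a $\matP^1$-bundle over the \emph{compactified} Jacobian) has $\Pres_i$ fibred over $\CPic^0(C_i)$. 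The repair is routine but necessary: let $\calL'$ range over $\CPic^0(C_i)$ and run your kernel-of-evaluation construction relatively over it. Everything local at $z_i$ goes through verbatim, since $x_i,y_i$ are smooth points of $C_i$ and any torsion-free sheaf on $C_i$ is locally free near them; the two sections then glue onto $\CPic^{-1}(C_i)$ by the same translation.
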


The compactified Jacobian $\CPic^0(C)$ is not smooth, but its singular locus is exactly the non-locally free locus, i.e. $\CPic^0(C)_{sing}=\partial \CPic^0(C)\coloneqq\CPic^0(C)\mysetminus \Pic^0(C)$. Moreover, this singular locus admits a stratification in terms of the orbits of the action of $\Pic^0(C)$ on $\CPic^0(C)$ by tensor product.

\begin{proposition}\label{PropOrbitsDecompSingLocusCompJacobian}
	For each $i=1,\dots \delta$, there exists $\binom{\delta}{i}$ orbits of codimension $i$, each of which is isomorphic to $\Pic^0(C_i)$ (a $\Pic^0(C_i)$-torsor) where $C_i\to C$ is the partial desingularization of $i$ nodes, and $\partial \CPic^0(C)$ is the union of these orbits.
\end{proposition}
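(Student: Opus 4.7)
The proof rests on two ingredients: a structure theorem for rank $1$ torsion-free sheaves on a nodal curve, and an analysis of the $\Pic^0(C)$-action through partial normalizations.

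\textbf{Step 1: Structure of rank $1$ torsion-free sheaves.} To any $[\calI]\in \CPic^0(C)$ I would attach the subset $T(\calI)\subseteq \{z_1,\dots,z_\delta\}$ of nodes at which $\calI$ fails to be locally free. Locally at a node, $\calO_{C,z_j}\simeq k\llbracket x,y\rrbracket/(xy)$ and any indecomposable rank $1$ torsion-free module is either free or isomorphic to the maximal ideal $(x,y)$, i.e.\ to $\nu_{j,*}\calO_{C_j}$ locally. A standard local-to-global argument (see e.g.\ Kleppe's thesis, or Rego) then shows that for $T=T(\calI)$ there is a uniquely determined line bundle $\calL\in \Pic(C_T)$ on the partial normalization $\nu_T\colon C_T \to C$ at the nodes in $T$ such that $\calI\simeq \nu_{T,*}\calL$. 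Because $\nu_T$ is finite and birational, $\nu_{T,*}$ is fully faithful on line bundles, giving a closed embedding $\Pic(C_T)\hookrightarrow \CPic(C)$.

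\textbf{Step 2: Description of the $\Pic^0(C)$-orbits.} Tensoring preserves the non-locally-free locus, so the $\Pic^0(C)$-action preserves each stratum $\Sigma_T:=\{[\calI]\mid T(\calI)=T\}\simeq\nu_{T,*}\Pic(C_T)$. By the projection formula,
\[
\nu_{T,*}\calL \otimes \calM \;=\; \nu_{T,*}\bigl(\calL\otimes \nu_T^*\calM\bigr)\qquad (\calM\in\Pic^0(C)),
\]
so the action on $\Sigma_T$ factors through the pullback $\nu_T^*\colon \Pic^0(C)\twoheadrightarrow \Pic^0(C_T)$. Surjectivity follows from iterating $|T|$ times the exact sequence recalled just before Theorem~3.2.1 of Kleppe in the excerpt, and the kernel $(\matG_m)^{|T|}$ acts trivially; the residual free transitive action of $\Pic^0(C_T)$ exhibits $\Sigma_T$ as a single orbit, and as a $\Pic^0(C_T)$-torsor.

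\textbf{Step 3: Counting, codimensions, and exhaustion.} For each $i\in \{1,\dots,\delta\}$ there are exactly $\binom{\delta}{i}$ choices of a size-$i$ subset $T\subseteq\{z_1,\dots,z_\delta\}$, giving $\binom{\delta}{i}$ orbits $\Sigma_T$. Since each elementary desingularization drops the arithmetic genus by one, $\dim\Sigma_T=\dim\Pic^0(C_T)=p_a(C)-i$, which is the codimension in $\CPic^0(C)$ (the latter being of pure dimension $p_a(C)$ by the theorem of Altman--Iarrobino--Kleiman). Finally $[\calI]\in\partial\CPic^0(C)$ iff $T(\calI)\neq\emptyset$, so
\[
\partial\CPic^0(C)\;=\;\bigsqcup_{\emptyset\neq T\subseteq\{z_1,\dots,z_\delta\}}\Sigma_T.
\]

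\textbf{Main obstacle.} The delicate point is the global part of Step~1: that every rank $1$ torsion-free sheaf is a pushforward $\nu_{T,*}\calL$ from its partial normalization (not only locally at each node), and that the resulting $\calL$ is a genuine line bundle. The local classification at a node is elementary; promoting it to a scheme-theoretic statement, and deducing that $\nu_{T,*}$ yields a \emph{closed} embedding between the relevant Picard schemes, is the only step where one has to invoke a non-trivial representability/base-change result behind the scenes.
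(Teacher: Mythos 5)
Your proposal is correct and follows essentially the same route as the paper: both rest on the structure result that a sheaf failing to be locally free exactly at a subset $T$ of the nodes is the pushforward of a line bundle from the partial normalization at $T$, then use the projection formula together with the surjectivity of $\nu_T^*\colon\Pic^0(C)\to\Pic^0(C_T)$ to identify each stratum with a single orbit that is a $\Pic^0(C_T)$-torsor, and finally count subsets. The only cosmetic difference is that the paper fixes a base point in $\Pic^{-m}(C_m)$ and verifies freeness by an explicit stabilizer computation, whereas you deduce it from the full faithfulness of $\nu_{T,*}$ on line bundles; your honest flagging of the global pushforward statement as the one nontrivial input matches where the paper itself defers to Kleppe.
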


\begin{proof}
	Pick a degree $0$ rank $1$ torsion-free sheaf $F\in\partial\CPic^0(C)$. Up to reordering, assume that $F$ is not locally free only over the nodes $\{z_1,\dots,z_m\}$, and consider $\mu\colon  C_m\to C$ the partial desingularization of these nodes. Fix a base point $\calL_m\in \Pic^{-m}(C_m)$. One can check that $F$ is in fact of the form $\mu_* \calF_m$ for the degree $-m$ line bundle $\calF_m\coloneqq\mu^*F/tors \in \Pic^{-m}(C_m)$, and in particular $\calF_m\simeq \calL_m\otimes\calL_0$ for some $\calL_0\in \Pic^0(C_m)$. By the surjectivity of $\mu^*\colon \Pic^0(C)\to \Pic^0(C_m)$, there is some line bundle $\calL\in \Pic^0(C)$ such that $\calL_0=\mu^*\calL$. By the projection formula, we get $F\simeq \mu_*\calL_m \otimes \calL$, and therefore $F$ lies in the orbit $O$ of $\mu_*\calL_m$.
	
	Moreover, the action of $\Pic^0(C)$ factors to a free and transitive action of $\Pic^0(C_m)$ on the orbit of $\mu_*\calL_m$. Indeed, pick $\calL\in \Pic^0(C)$, we have
	\begin{eqnarray*}
		& & \calL \otimes \mu_*\calL_m \simeq \mu_*\calL_m \\
		&\iff & \calL \otimes \mu_*\calO_{C_m} \simeq \mu_*\calO_{C_m} \\
		&\iff & \mu_*\mu^*\calL \simeq \mu_*\calO_{C_m} \\
		&\iff & H^0(C_m,\mu^*\calL)\neq 0 \\
		&\iff & \mu^*\calL\simeq \calO_{C_m}
	\end{eqnarray*}
	since $\calL$ has degree $0$. The orbit $O$ is isomorphic to $\Pic^0(C_m)$ (because $\Pic^0(C_m)$ is smooth and $\Pic^0(C_m)\to O$ is faithfully flat, see \cite[\S \textit{Group actions}, Prop. $1.65$]{MilneAlgebraicGroups}), which has dimension $\dim \Pic^0(C)-i$.
	
	Finally, the number of orbits of a fixed codimension $i$ is given by the number of choices of a subset $\{ z_{j_1},\dots z_{j_i}\}\subset \{z_1,\dots,z_\delta\}$, which is $\binom{\delta}{i}$.

\end{proof}

More can be said about the local structure of the singular locus $\CPic^0(C)$. 

\begin{proposition}[\cite{KleppePhDThesisPicSchemeCurveComp}, Proposition 6.2.2]
	
	The completion of the local ring of $\CPic^0(C)$ at a point $[F]$ in an orbit of codimension $m$ is of the form
	\begin{eqnarray}\label{EqnFormalLocalRingCompPicSingularLocus}
	\widehat{\calO}_{\CPic^0(C),[F]}\simeq \widehat{\bigotimes}_{j=1}^{m}\dfrac{\matC[[ X_j,Y_j]]}{X_jY_j} \widehat{\bigotimes}\matC[[T_1,\dots,T_{p_a-m}]].
	\end{eqnarray}
\end{proposition}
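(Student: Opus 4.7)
The plan is to proceed by induction on the number $m$ of nodes at which $F$ fails to be locally free, leveraging Kleppe's inductive construction of $\CPic^0(C_{i-1})$ from $\Pic^0(C_i)$ via the presentation scheme $\Pres_i \to \Pic^0(C_i)$, which is already recalled in the text. The base case $m=0$ is immediate: $[F] \in \Pic^0(C)$ lies in the smooth locus, and $\Pic^0(C)$ has dimension $p_a$, so the completed local ring is $\matC[[T_1,\dots,T_{p_a}]]$.

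Up to relabelling, assume $F$ fails to be locally free precisely at $z_1,\dots,z_m$, and write $F=\mu_*\calF_m$ with $\mu=\nu_1\circ\cdots\circ\nu_m\colon C_m\to C$ and $\calF_m\in\Pic^0(C_m)$ (after twisting by a line bundle on $C$ as in the proof of \cref{PropOrbitsDecompSingLocusCompJacobian}). For the inductive step, I would analyze the passage from $\CPic^0(C_i)$ to $\CPic^0(C_{i-1})$ étale-locally at the point $[F_{i-1}]=[(\nu_i)_*\calF_i]$. By the description via $\Pres_i$, the $\matP^1$-bundle is étale-locally trivial over a neighborhood of $[\calF_i]$, and the two sections $s_0,s_\infty$ correspond to the $0$ and $\infty$ sections of $\matA^1\cup\{\infty\}$. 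The gluing via the translation by $\calO_{C_i}(x_i-y_i)$ identifies them to produce the boundary stratum, and étale-locally near the image point the resulting scheme is isomorphic to
\[
  U_i \times \Spec\matC[X_i,Y_i]/(X_iY_i),
\]
where $U_i$ is an étale neighborhood of $[\calF_i]$ in $\Pic^0(C_i)$; the two branches $X_i=0$ and $Y_i=0$ correspond precisely to the two local smoothings of the boundary provided by the images of $s_0$ and $s_\infty$.

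Iterating this description $m$ times, and using that the operations at different nodes commute (they act on disjoint formal neighborhoods of the nodes $z_1,\dots,z_m$ of $C$, so the presentation-scheme constructions can be performed independently), one obtains an étale-local isomorphism
\[
  \widehat{\calO}_{\CPic^0(C),[F]} \;\simeq\; \widehat{\calO}_{\Pic^0(C_m),[\calF_m]} \;\widehat{\otimes}\; \bigotimes_{j=1}^m \matC[[X_j,Y_j]]/(X_jY_j).
\]
Since $\Pic^0(C_m)$ is smooth of dimension $p_a-m$, its completed local ring at $[\calF_m]$ is $\matC[[T_1,\dots,T_{p_a-m}]]$, which yields the claimed formula. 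As a consistency check, the Zariski tangent space has dimension $2m + (p_a-m) = p_a + m$, matching $\dim\Ext^1_{\calO_C}(F,F)$ as computed via the local-to-global spectral sequence together with the fact that $\SheafExt^1(F,F)$ is supported at the $m$ non-locally-free nodes with stalk dimension $2$ at each of them.

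The main technical obstacle is justifying that the $m$ node-factors are genuinely independent, i.e.\ that the iterated construction produces a formal product of the $\matC[[X_j,Y_j]]/(X_jY_j)$'s rather than some nontrivial deformation of this product. This is where one has to work étale-locally rather than Zariski-locally, and invoke the functoriality of Kleppe's presentation functor $\Pres_{C_i/C_{i-1}}$ to check that the construction at the $i$-th node does not interfere with the branches produced at the earlier nodes. Once this independence is established, the formula follows immediately from the base case and the inductive description.
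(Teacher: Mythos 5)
First, note that the paper itself does not prove this proposition: it is imported verbatim from Kleppe's thesis (Prop.\ 6.2.2) and used as a black box, so there is no internal argument to measure yours against. Judged on its own terms, your sketch follows the natural inductive strategy, but it has a genuine gap at exactly the point you flag at the end. The gluing statement you induct on (Kleppe, Lemma 7.2.1, as recalled in \S\ref{SectionNodalCurves}) only identifies the \emph{underlying topological space} of $\CPic^0(C_{i-1})$ with the quotient of $\Pres_i$ by the identification of $s_0$ and $s_\infty$; it gives no control over the scheme structure, so one cannot deduce from it that the completed local ring at a glued point is $\matC[[X_i,Y_i]]/(X_iY_i)$ completed-tensor the local ring of the base, rather than some non-transverse or non-reduced degeneration of that. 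A second, related issue: for $m\geq 2$ the relevant base of $\Pres_i$ in your inductive step is the compactified Jacobian $\CPic^0(C_i)$, which is itself singular at the point $[(\nu_{i+1}\circ\cdots)_*\calF_m]$, so "$U_i$ an \'etale neighborhood in $\Pic^0(C_i)$" is not quite the right object, and the claim that the new node factor splits off from the existing branches of the base is precisely the independence assertion. Since that transversality/independence \emph{is} the content of the proposition, deferring it to "functoriality of the presentation functor" leaves the heart of the proof unaddressed; your tangent-space count $\dim\Ext^1(F,F)=p_a+m$ is consistent with the formula but does not distinguish it from, say, a non-reduced or non-split local model with the same embedding dimension.

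A route that closes this gap cleanly is deformation-theoretic rather than via the presentation schemes: $\widehat{\calO}_{\CPic^0(C),[F]}$ prorepresents the deformation functor of $F$ as a rank-$1$ torsion-free sheaf on $C$; deformations of $F$ localize at the nodes $z_1,\dots,z_m$ where $F$ is not free, where the completed stalk is the maximal ideal $\mathfrak{m}$ of the node $\matC[[x,y]]/(xy)$, whose miniversal deformation space is $\matC[[X_j,Y_j]]/(X_jY_j)$ (the two branches being the two partial smoothings of the module). The local-to-global morphism of deformation functors is formally smooth of relative dimension $p_a-m$ (this is where your $\Ext$-computation enters), which yields the completed tensor product decomposition with the node factors independent by construction. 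If you prefer to keep your inductive argument, you would need to first upgrade the gluing of $\Pres_i$ to a scheme-theoretic pushout statement and verify transversality of the two sections against the branches already present in the base; as written, the argument is an outline of the statement rather than a proof of it.
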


For the rest of the paper, we will only need to consider tangent spaces (instead of formal neighborhoods) on the singular locus of $\CPic^0(C)$. 

%
%
%

\begin{lemma}\label{LemmaSmoothResolutionPdeltaFibration}
	There exists a smooth resolution $P^{\sm} \to \CPic^0(C)$ which is a $(\matP^1)^{\delta}$-fibration over $\Pic^0(C^\nu)$
\end{lemma}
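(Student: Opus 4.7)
The plan is to construct $P^{\sm}$ directly as a fibre product over $\Pic^0(C^\nu)$ of $\delta$ projective bundles---one per node---and to define the resolution morphism $P^{\sm}\to\CPic^0(C)$ via the presentation-of-sheaf formalism of Kleppe. The single-node case ($\delta=1$) is already contained in the Kleppe theorem recalled above: the $\matP^1$-bundle $\Pres_1\to\Pic^0(C^\nu)$ is smooth, and $\CPic^0(C)$ is obtained from it by identifying two sections, so $\Pres_1$ itself serves as the sought-after resolution. For general $\delta$ one could try to iterate Kleppe's construction along the tower $C^\nu\to C_{\delta-1}\to\cdots\to C$, but this only yields a tower of $\matP^1$-bundles rather than a manifestly global $(\matP^1)^\delta$-fibration over $\Pic^0(C^\nu)$, so a direct construction seems cleaner.

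\textbf{Construction.} Fix a Poincaré line bundle $\calP$ on $C^\nu\times\Pic^0(C^\nu)$ (possibly after a harmless étale base change), and set $\calL_i:=\calP|_{\{x_i\}\times\Pic^0(C^\nu)}$ and $\calM_i:=\calP|_{\{y_i\}\times\Pic^0(C^\nu)}$. Define
\[
P^{\sm}:=\matP(\calL_1\oplus\calM_1)\times_{\Pic^0(C^\nu)}\cdots\times_{\Pic^0(C^\nu)}\matP(\calL_\delta\oplus\calM_\delta),
\]
which is smooth and a Zariski-locally trivial $(\matP^1)^\delta$-fibration over $\Pic^0(C^\nu)$ by construction. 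To a point $(\calL,\ell_1,\dots,\ell_\delta)\in P^{\sm}$ with $\ell_i\subset\calL_{x_i}\oplus\calL_{y_i}$ a line, associate the sheaf
\[
F_{\calL,\ell_\bullet}:=\ker\Bigl(\nu_*\calL\twoheadrightarrow\bigoplus_{i=1}^\delta(\calL_{x_i}\oplus\calL_{y_i})/\ell_i\Bigr)\in\CPic^0(C).
\]
A stalk-wise check at each node shows that $F_{\calL,\ell_\bullet}$ is rank-one torsion-free, locally free at $z_i$ iff $\ell_i$ is the graph of an isomorphism $\calL_{x_i}\xrightarrow{\sim}\calL_{y_i}$; combining $\chi(\nu_*\calL)=\chi(\calL)$ with $p_a(C)-g(C^\nu)=\delta$ yields $\deg F_{\calL,\ell_\bullet}=0$. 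The assignment extends functorially to families via the tautological line subbundles of $\calL_i\oplus\calM_i$ on $P^{\sm}$, defining a morphism $P^{\sm}\to\CPic^0(C)$ that is automatically proper since $P^{\sm}$ is proper over the abelian variety $\Pic^0(C^\nu)$.

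\textbf{Resolution.} To conclude it is a resolution of singularities, I would argue as follows. Over the open sub-bundle where every $\ell_i$ is the graph of an isomorphism, the morphism restricts to an isomorphism onto $\Pic^0(C)\subset\CPic^0(C)$, recovering the classical $(\matG_m)^\delta$-torsor description of the Picard scheme over $\Pic^0(C^\nu)$. Surjectivity onto the boundary follows from \cref{PropOrbitsDecompSingLocusCompJacobian}: a point in the codimension-$m$ orbit with non-locally-free nodes $J$ of size $m$ is hit by choosing each $\ell_j$ ($j\in J$) to be one of the two coordinate axes and each remaining $\ell_i$ generic. A direct kernel computation gives $\ker(\nu_*\calL\to\calL_{y_j})=\nu_*(\calL(-y_j))=\nu_*(\calL(x_j-y_j)(-x_j))=\ker(\nu_*\calL(x_j-y_j)\to\calL(x_j-y_j)_{x_j})$, so the pairs $(\calL,\ell_j=\calL_{x_j}\oplus 0)$ and $(\calL(x_j-y_j),\ell_j=0\oplus\calL(x_j-y_j)_{y_j})$ produce the same sheaf, exactly reproducing the section identification $s_0\leftrightarrow s_\infty$ via translation by $\calO_{C^\nu}(x_j-y_j)$ from Kleppe's theorem.

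\textbf{Main obstacle.} The delicate step is verifying that the $\delta$ node-by-node identifications combine into a single globally well-defined map and that the fibres of $P^{\sm}\to\CPic^0(C)$ over the deeper strata are exactly as expected, i.e.\ that the smoothings at different nodes do not interact. This should follow from the local-analytic decomposition \eqref{EqnFormalLocalRingCompPicSingularLocus}, whose factorised form $\widehat{\bigotimes}_j\matC[[X_j,Y_j]]/(X_jY_j)$ expresses $\CPic^0(C)$ near the deepest stratum as a transverse crossing of $\delta$ independent node-type singularities, which can therefore be resolved separately by $\matP^1$-smoothings and assembled into the global $(\matP^1)^\delta$-fibration described above.
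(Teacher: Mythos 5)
Your proof is correct, but it takes a genuinely different route from the paper. The paper argues by induction on $\delta$: it takes the $\matP^1$-bundle $\Pres_1\to\CPic^0(C_1)$ from Kleppe's presentation construction, assumes a resolution $P_1^{\sm}\to\CPic^0(C_1)$ exists which is a $(\matP^1)^{\delta-1}$-fibration over $\Pic^0(C^\nu)$, and defines $P^{\sm}$ as the fibre product $P_1^{\sm}\times_{\CPic^0(C_1)}\Pres_1$; smoothness, the fibration structure, and birationality then come for free from the inductive hypothesis and base change. You instead build $P^{\sm}$ in one step as $\matP(\calL_1\oplus\calM_1)\times_{\Pic^0(C^\nu)}\cdots\times_{\Pic^0(C^\nu)}\matP(\calL_\delta\oplus\calM_\delta)$ and define the map to $\CPic^0(C)$ explicitly as a kernel of $\nu_*\calL$ modified at the nodes. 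Your construction is essentially the Oda--Seshadri presentation of the compactified Jacobian, which the paper only invokes afterwards (in the remark identifying $P^{\sm}\to\CPic^0(C)$ with the normalization); it buys an explicit global model, a concrete description of the fibres over every stratum, and a transparent verification of the section identifications $s_0\leftrightarrow s_\infty$, at the cost of having to check by hand that the family of kernels is flat and fibrewise torsion-free so that the moduli functor yields an actual morphism (which your tautological-subbundle argument does supply: the quotient is finite flat of length $\delta$ over the base, so the kernel is flat). The paper's induction is shorter because it delegates all of this to Kleppe's theorems. Note also that for the lemma as stated your final ``main obstacle'' paragraph is not really needed: once you know $P^{\sm}$ is smooth and proper and the map restricts to an isomorphism over the dense open $\Pic^0(C)\subset\CPic^0(C)$, it is automatically a resolution, regardless of the precise fibre structure over the deeper strata.
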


\begin{proof}
	We proceed by induction. If $\delta=1$, then $\Pres_1 \to \CPic^0(C)$ is a smooth resolution and it is a $\matP^1$-bundle over $\Pic^0(C^\nu)$.
	
	Now assume that the lemma is true for $\delta-1\geq 1$. Then there is a smooth resolution $P_1^{\sm}$ of $\CPic^0(C_1)$ and a $(\matP^1)^{\delta-1}$-fibration $P_1^{\sm} \to \Pic^0(C^\nu)$. Then consider the fibre product
	$$\begin{tikzcd}
	P^{\sm} \ar[d] \arrow[dr, phantom, "\lrcorner", very near start] \ar[r] & \Pres_1 \ar[d,"\matP^1"] \\
	P_1^{\sm} \ar[r] & \CPic^0(C_1)
	\end{tikzcd}$$
	Since the base change of a $\matP^1$-bundle is a $\matP^1$-bundle \cite[\href{https://stacks.math.columbia.edu/tag/01O3}{Tag 01O3}]{stacks-project}, $P^{\sm}$ is a $\matP^1$-bundle over a $(\matP^1)^{\delta-1}$-fibration over $\Pic^0(C^\nu)$ (which is smooth), hence it is smooth and a $(\matP^1)^\delta$-fibration over $\Pic^0(C^\nu)$. Obviously $P^{\sm}$ is birational to $\Pres_1$, which is itself birational to $\CPic^0(C)$, therefore it is a smooth resolution of $\CPic^0(C)$.

\end{proof}

\begin{remark}
	The resolution $P^{\sm} \to \CPic^0(C)$ is in fact the normalization map. This is proved in \cite{OdaSeshadriCompGenJacVar}, but can also be deduced from (\ref{EqnFormalLocalRingCompPicSingularLocus}). When $\delta=1$, it is also proved (in more general settings) in \cite[Prop. 2.2]{HwangOguisoCharFoliationDiscHypHolLagFib}.
\end{remark}


Thanks to \cref{LemmaSmoothResolutionPdeltaFibration}, we can find a basis of the tangent space (in fact, of the tangent cone) of $\CPic^0(C)$ at a singular point.

Start with a sheaf $[F]\in \CPic^0(C)$ and assume that it is not locally free only over the nodes $z_1,\dots,z_m$. Recall that $\CPic^0(C)$ is the quotient of a $\matP^1$-bundle $\Pres_1$ over $\CPic^0(C_1)$, identifying two sections $s_0,s_\infty$ by the translation by $\calO_{C_1}(x_1-y_1)$. Then there is some $F_1\in\CPic^0(C_1)$ such that the preimage of $F$ in $\Pres_1$ is given by $\{(F_1,0),(F_1(x_1-y_1),\infty)\}$. The image in $T_{\CPic^0(C),[F]}$ of the tangent spaces $T_{s_0,(F_1,0)}$ and $T_{s_\infty,(F_1(x_1-y_1),\infty)}$ coincide (and both are identified with $T_{\CPic^0(C_1),[F_1]}$), thus we get
$$T_{\CPic^0(C),[F]}\simeq T_{\CPic^0(C_1),[F_1]}\oplus \matC^2.$$
The generators of the summand $\matC^2$ can be chosen to be the images by (the differential of) $\Pres_1\to \CPic^0(C)$ of tangent vectors $t_0\in T_{\Pres_1,(F_1,0)}$ and $t_\infty\in T_{\Pres_1,(F_1(x_1-y_1),\infty)}$ which are tangent to the $\matP^1$-fibres (these images do not coincide, thanks to (\ref{EqnFormalLocalRingCompPicSingularLocus})). 

We can proceed from here by induction. Indeed, $\CPic^0(C_1)$ is the quotient of a $\matP^1$-bundle $\Pres_2$ over $\CPic^0(C_2)$ identifying two sections $s^2_0$ and $s_\infty^2$ up to translation by $\calO_{C_2}(x_2-y_2)$. As before, one can choose $F_2\in \CPic^0(C_2)$ and by the same argument we get
$$T_{\CPic^0(C_1),[F_1]}\simeq T_{\CPic^0(C_2),[F_2]}\oplus \matC^2,$$
where we identify again $T_{s_0^2,(F_2,0)}\simeq T_{s_\infty^2,(F_2(x_2-y_2)),\infty} \simeq T_{\CPic^0(C_2),[F_2]}$. The generators of the summand $\matC^2$ can be chosen to be the images of tangent vectors $t^2_0\in T_{\Pres_2,(F_2,0)}$ and $t^2_\infty\in T_{\Pres_2,(F_2(x_2-y_2),\infty)}$ tangent to the $\matP^1$-fibres.

Consider the fibre product
$$\begin{tikzcd}
\Pres_{1,2} \arrow[dr, phantom, "\lrcorner", very near start] \ar[d,"p"] \ar[r,"q"] & \Pres_1 \ar[d] \\
\Pres_2 \ar[r] & \CPic^0(C_1).
\end{tikzcd}$$
One can check that $t_0$ and $t_\infty$ are in the image of the differential map $\text{d}q$ at the preimages of $(F_1,0)$ and $(F_1(x_1-y_1),\infty)$ by $q$. In other words, we can choose $G_0, \ G_\infty\in q^{-1}(F_1,0)$ and $q^{-1}(F_1(x_1-y_1),\infty)$ respectively, and tangent vectors $t_0^1$ and $t_\infty^1$ tangent to the $\matP^1$ fibres of $p$ such that $\text{d}q(t_0^1)=t_0$ and $q(t_\infty^1)=t_\infty$. 

%

We can repeat this process $m$-times, after which the point $F_m\in\CPic^0(C_m)$ that we obtain is locally free (that is, smooth in $\CPic^0(C_m)$), so that $P^{\sm}\to \CPic^0(C_m)$ is a $(\matP^1)^{\delta-m}$-bundle on a neighborhood of $F_m$.

We conclude that in the preimage $\mathsf{sm}^{-1}[F]$ of $\mathsf{sm}\colon P^{\sm}\to \CPic^0(C)$, we can choose points $F_i$ and vectors $t_i\in T_{P^{\sm},[F_i]}$, $i=1,\dots 2m$, such that
\begin{eqnarray}\label{EqnDecompTangentSpaceCompJac}
T_{\CPic^0(C),[F]}\simeq T_{\Pic^0(C^\nu),[\calO_{C^\nu}]} \oplus \bigoplus_{1}^{2m} \matC t_i \oplus  \matC^{\delta-m},
\end{eqnarray}
where by abuse of notation we wrote $t_i$ for the images of $t_i$ by the differential $\text{d}\mathsf{sm}$.

\begin{remark}\label{RmkSummandCnuComesFromAnySection}
	Each $F_i$ belongs to some section $s_i$ of the $(\matP^1)^\delta$-bundle $P^{\sm}\to \Pic^0(C^\nu)$, hence the summand in (\ref{EqnDecompTangentSpaceCompJac}) isomorphic to $T_{\Pic^0(C^\nu),[\calO_{C^\nu}]}$ is the image of $T_{s_i,[F_i]}$ by $\text{d}\mathsf{sm}$ \textit{for all} $i$.
\end{remark}

\begin{remark}\label{RmkJdAreIsoAndTangentSpaceAreIso}
	As stated in \cref{RmkSectionAllPicdAreIsomorphic}, all the $\Pic^d_C$ (resp. $\CPic^d_C$) are isomorphic, for $d\in\matZ$, up to the choice of a base point in each $\Pic^d(C)$. Therefore, the previous discussion aditionally gives a description of the tangent space of any $\CPic^d(C)$. Moreover, since $\Pic^d(C)$ is a $\Pic^0(C)$-torsor, all its tangent spaces are naturally isomorphic to each other (the isomorphism $T_{\Pic^d(C),\calL_1} \xrightarrow{\sim} T_{\Pic^d(C),\calL_2}$ given by the differential $\text{d}m$ of the multiplication map $m\colon \calL\mapsto \calL\otimes \calL_1^\vee \otimes \calL_2$), and therefore the same statement holds for the tangent spaces within each orbits. For this reason, we do not pay too much attention to the choice of the basepoint of the tangent space.
\end{remark}

\section{Lagrangian fibrations over nodal curves on K3 surfaces}\label{SectionCompactifiedJacobianLagrangianFibrations}

Let $S$ be a K3 surface, and $C_0\subset S$ be a smooth curve of genus $p_a$. Denote $B\coloneqq |C_0|\simeq \matP^{p_a}$ the linear system on $S$ of curves linearly equivalent to $C_0$. Consider the $2p_a$-dimensional moduli space of semistable torsion sheaves $M\coloneqq M_H(0,[C_0],\chi\coloneqq 1-p_a+d)$ on $S$ with respect to a polarization $H\in\Pic(S)$ for some $d\in\matZ$. Choosing $H$ generic enough, we can assume that the singular locus of $M$ lies in the strictly semistable locus, and it is empty whenever the Mukai vector $(0,[C_0],\chi)$ is primitive. Moreover, Mukai showed in \cite{MukaiSympStructModuliSheavesAbK3Surf} that the stable locus of $M$ admits a holomorphic symplectic form $\sigma_M$.



Following \cite{BeauvilleSystemesHamiltoniens}, (see also \cite{ASFRelativePrymVar}), there is a \textit{support morphism}
$$\pi\colon M \to B$$
which sends a sheaf $F$ to its support $\pi(F)\coloneqq\Supp(F)$. The fibre of $\pi$ over an integral curve $C\in B$ is isomorphic to $\CPic^d(C)$. Set $\calU\subset B$ the open locus of integral curves, then we have an isomorphism
\begin{eqnarray}\label{EqnModuliK3IsRelCompJac}
M_\calU\coloneqq\pi^{-1}(\calU) \simeq \CPic^d_\calU(\calC)
\end{eqnarray}
where $\calC\to \calU$ denotes the universal family of curves associated to the linear system. 

\begin{remark}\label{RmkPureDim1Rk1SheafIrredCurveIsStable}
	A direct computation of Hilbert polynomials reveals that any pure sheaf of rank $1$ supported on an integral curve is automatically stable with respect to any polarization. In particular, $M_\calU$ is smooth.
\end{remark}

Beauville \cite{BeauvilleSystemesHamiltoniens} has shown that the map $\pi\colon M \to B$ is a \textit{Lagrangian fibration} (that is, the general fibre of $\pi$ is Lagrangian with respect to $\sigma_M$) called \textit{Beauville--Mukai integrable system}. We will use a slightly stronger result.

\begin{lemma}[\cite{MatsushitaSingularFibresLagFib}, Lemma 2.3]\label{LemmaPullbackSympFormOnResolutionFibreIsTrivial}
	For any curve $C\in \calU$, the smooth resolution $\mathsf{sm}\colon P^{\sm} \to \CPic^d(C)$ satisfies
	$$\mathsf{sm}^*\sigma_M|_{\CPic^d(C)}=0.$$
\end{lemma}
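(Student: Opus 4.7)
The plan is to reduce the vanishing to a dense open subset of $P^{\sm}$ where the resolution is an isomorphism, and then use smoothness of $P^{\sm}$ to propagate the vanishing. By \cref{LemmaSmoothResolutionPdeltaFibration} (or equivalently by the fact that $\mathsf{sm}$ is the normalization of $\CPic^d(C)$), the map $\mathsf{sm}$ restricts to an isomorphism over $\Pic^d(C)\subset \CPic^d(C)$. Hence $\mathsf{sm}^{-1}(\Pic^d(C))$ is a dense open subset of the smooth variety $P^{\sm}$, on which the pullback $\mathsf{sm}^*\sigma_M|_{\CPic^d(C)}$ is identified with $\sigma_M|_{\Pic^d(C)}$. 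It therefore suffices to prove that $\sigma_M|_{\Pic^d(C)}=0$: once this is known, a holomorphic $2$-form on the smooth $P^{\sm}$ vanishing on a dense open is identically zero.

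The key observation is that $\Pic^d(C)$ is contained in the locus $W\subset M_\calU$ where the morphism $\pi$ is smooth. Indeed, by \cref{RmkPureDim1Rk1SheafIrredCurveIsStable} the ambient space $M_\calU$ is smooth along $\CPic^d(C)$; the fibre $\CPic^d(C)$ is smooth of dimension $p_a$ at every point of $\Pic^d(C)$; and since $\dim B=p_a$, a dimension count forces $d\pi_x$ to be surjective with $\ker(d\pi_x)=T_x\Pic^d(C)$ for every $x\in \Pic^d(C)$. Restricting $\sigma_M$ to the rank-$p_a$ vertical tangent bundle $T_{W/\calU}$ produces a holomorphic section $\alpha\in H^0(W,\bigwedge^2\Omega^1_{W/\calU})$. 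Over the preimage of the dense open $\calU^{\circ}\subset \calU$ of smooth curves, $\alpha$ vanishes because the fibres of $\pi$ are smooth Jacobians which are Lagrangian (this is precisely the statement that the Beauville--Mukai system is a Lagrangian fibration). Since $W$ is smooth, $\alpha$ must then vanish identically on $W$, so in particular $\sigma_M|_{\ker(d\pi_x)}=\sigma_M|_{T_x\Pic^d(C)}=0$ for every $x\in \Pic^d(C)$, as desired.

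The proof is essentially formal once these two ingredients are in place. The only step that deserves attention is recognizing that $\Pic^d(C)$ lies inside the smooth locus of the morphism $\pi$ (not merely inside the smooth locus of the fibre $\CPic^d(C)$), which is exactly what allows Matsushita's Lagrangian property of smooth fibres to be transferred to this special nodal fibre by a semi-continuity argument; everything else reduces to the usual fact that a holomorphic form on a smooth variety is determined by its restriction to any dense open subset.
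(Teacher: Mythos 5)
Your proof is correct, but note that the paper does not actually prove this statement: it is imported verbatim from \cite{MatsushitaSingularFibresLagFib}, Lemma 2.3, and the only ingredient the text supplies is the remark, recorded right after the lemma, that $M_\calU$ is smooth so that $\pi_\calU$ is flat by miracle flatness. What you have written is a legitimate self-contained proof of the special case that is actually used, and it isolates the right key point: since $\CPic^d(C)$ is the scheme-theoretic fibre of $\pi$ and is smooth of dimension $p_a$ along $\Pic^d(C)$, the open subset $\Pic^d(C)$ lies in the smooth locus of the \emph{morphism} $\pi$, so the vanishing of $\sigma_M$ on vertical tangent spaces propagates from the generic (smooth, Lagrangian) fibres as the vanishing of a holomorphic section of the locally free sheaf $\bigwedge^2\Omega^1_{W/\calU}$ on your open set $W$; the vanishing on all of $P^{\sm}$ then follows because $\mathsf{sm}$ is an isomorphism over the dense open $\Pic^d(C)$ and $P^{\sm}$ is smooth. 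Both density arguments you invoke are sound: $P^{\sm}$ is irreducible by \cref{LemmaSmoothResolutionPdeltaFibration} (an iterated $\matP^1$-bundle over the connected $\Pic^0(C^\nu)$), and $M_\calU\simeq\CPic^d_\calU(\calC)$ is irreducible since its fibres are integral of constant dimension over the irreducible base $\calU$. The only content of Matsushita's general lemma that your argument does not reproduce is the case of a fibre meeting the non-smooth locus of the morphism in a dense set, which is where the real work lies in general; here that situation is excluded precisely by \cref{RmkPureDim1Rk1SheafIrredCurveIsStable}, which is exactly the role that remark plays in the paper.
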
 

For the previous lemma, we use that $M_\calU$ is smooth (\cref{RmkPureDim1Rk1SheafIrredCurveIsStable}), in particular $\pi_\calU$ is flat by miracle flatness \cite[Exer. III.10.9]{HartshorneAlgebraicGeometry}.
\newline



For integers $0\leq \delta \leq p_a$, one can define the \textit{Severi variety}
$$V_{B,\delta}\subset B$$
as the subvariety parametrizing irreducible curves having $\delta$ nodes as the only singularities. It is a quasi-projective variety, functorially well defined and, when non-empty, each of its irreducible components is smooth of expected dimension $p_a-\delta$ (see \cite{TannenbaumFamiliesCurvesNodesK3Surfaces}). Up to our knowledge, the most general result of non-emptyness is the following.

\begin{theorem}[\cite{ChenRationalCurvesK3Surfaces}]
	Let $(S,H)$ be a general primitively polarized K3 surface of genus at least $3$. Then for each $m\geq 1$, the Severi variety $V_{|mH|,\delta}$ is non-empty.
\end{theorem}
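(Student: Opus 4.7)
The plan is to split the argument into two stages, separating the substantive geometric content (existence of irreducible rational nodal curves) from the subsequent reduction to arbitrary $\delta$.

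For the main case $\delta = p_a$, I would follow Chen's degeneration strategy: degenerate $(S, H)$ in a one-parameter family $(\calS, \calH) \to \Delta$ to a special K3 surface $(S_0, H_0)$ admitting an elliptic fibration, where $H_0$ is expressible in terms of a section and the fiber class. On $S_0$ one can write down explicit reducible curves in $|mH_0|$, built from the section and singular fibers, arranged so that the total number of nodes equals the arithmetic genus $p_a$. A smoothing argument on the relative Hilbert scheme of $\calS/\Delta$, combining independent smoothing of nodes with a dimension count, then shows that these curves deform to irreducible $p_a$-nodal curves on the general fiber, proving $V_{|mH|, p_a} \neq \emptyset$.

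For arbitrary $\delta \leq p_a$, I would reduce to the case $\delta = p_a$ using the versal deformation $xy = t$ of a node. Since nodes of a reduced curve smooth independently, the deformations of a $p_a$-nodal curve $C \in |mH|$ that preserve exactly $\delta$ chosen nodes form a subvariety of codimension $p_a - \delta$ in the local deformation space of $C$. Tannenbaum's result that $V_{|mH|, \delta}$ is smooth of expected dimension $p_a - \delta$ (when non-empty) then implies that these smoothings actually realize elements of $V_{|mH|, \delta}$, so the variety is non-empty for every $0 \leq \delta \leq p_a$.

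The main obstacle is the first stage: constructing the reducible limit curves on $S_0$ is combinatorially concrete, but verifying that they deform to \emph{irreducible} nodal curves on the general K3, rather than to reducible or non-reduced limits, requires a delicate analysis of the relative Hilbert scheme along the central fiber. This is the technical heart of Chen's argument and the reason the statement is cited rather than reproved here.
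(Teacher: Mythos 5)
The paper does not prove this statement at all: it is imported verbatim as an external result, cited to Chen, and used as a black box to guarantee non-emptiness of the Severi varieties needed in \S 3. So there is no internal proof to compare against, and your decision to sketch the strategy of the cited argument rather than reprove it is exactly the right register. Your two-stage outline is faithful to how the result is actually obtained in the literature: Chen's theorem proper concerns the extreme case $\delta=p_a$ (irreducible rational nodal curves in $|mH|$), proved by degenerating to an elliptic K3 with section, writing down reducible limits built from the section and singular fibers, and smoothing in the relative Hilbert scheme; the intermediate $0\leq\delta\leq p_a$ cases then follow by partially smoothing nodes. One small imprecision in your second stage: invoking ``$V_{|mH|,\delta}$ is smooth of expected dimension when non-empty'' to conclude non-emptiness reads circularly. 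The correct input is Tannenbaum-type smoothness \emph{at the $p_a$-nodal curve $[C]$ produced in stage one}, i.e.\ the surjectivity of the map from the deformation space of $C$ in $|mH|$ to the product of the local (versal) deformation spaces of its nodes; this surjectivity is what lets you prescribe independently which nodes persist and guarantees the general partial smoothing has exactly the $\delta$ chosen nodes and no others. With that rephrasing your reduction is the standard one, and the genuinely hard content remains, as you say, the degeneration analysis in stage one — which is precisely why the paper cites it rather than reproving it.
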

%

\subsection{Sub-fibration over nodal curves}\label{SectionSubFibNodalCurves}

We keep the notation of \S \ref{SectionCompactifiedJacobianLagrangianFibrations}. Assume that there exists a non-empty smooth irreducible component $V\subset V_{B,\delta}$ (which, in particular, lies in $\calU$) of dimension $\dim V=p_a-\delta$. For simplicity, assume $d=\delta$, see \cref{RmkSubfibJacobianAnyDegree}.

\begin{theorem}\label{ThmSubfibrationJacobianNormalizations}
	There exists a smooth subvariety $N\subset M$ with $\pi(N)=V$, such that for any $C\in V$, the fibre of $N\to V$ is given by $N_C\simeq \Pic^0(C^\nu)$ for $C^\nu$ the normalization of $C$. Moreover, the restricted symplectic form $\sigma_N\coloneqq\sigma_M|_{N}$ is still nondegenerate, hence $N$ is smooth and symplectic and $\pi_N\colon N \to V$ describes a Lagrangian subfibration of $\pi\colon M\to B$.
\end{theorem}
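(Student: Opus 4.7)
The plan is to construct $N$ as the image of the relative Picard scheme of a simultaneous normalization of the universal curve over $V$. Since $V$ is a smooth equisingular component of the Severi variety, the $\delta$ nodes in each fibre of the universal family $\calC \to V$ sweep out $\delta$ disjoint smooth sections, and blowing these up yields a simultaneous resolution $\nu_V \colon \calC^\nu \to \calC$ with $\calC^\nu \to V$ smooth of relative genus $p_a - \delta$. The composition $\calC^\nu \to \calC \hookrightarrow S \times V$ being finite, pushforward defines a $V$-morphism $\Upsilon \colon \Pic^0_V(\calC^\nu) \to M$ sending $(C,\calL)$ to $\nu_* \calL$. As $\chi(\nu_*\calL) = \chi(\calL) = 1 - p_a + \delta$, this lands in $M$ with the correct Euler characteristic; moreover each $\nu_*\calL$ fails to be locally free precisely at the $\delta$ nodes of $C$, so by \cref{PropOrbitsDecompSingLocusCompJacobian} it lies in the unique codimension-$\delta$ orbit of $\CPic^\delta(C)$, which is itself isomorphic to $\Pic^0(C^\nu)$. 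Thus $\Upsilon$ is fibrewise an isomorphism onto this orbit, hence injective on points.

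Next I would verify that $\Upsilon$ is an embedding onto a smooth subvariety $N \coloneqq \Upsilon(\Pic^0_V(\calC^\nu))$. Injectivity on tangent vectors follows from the splitting $T_{(C,\calL)} \Pic^0_V(\calC^\nu) = T_C V \oplus T_\calL \Pic^0(C^\nu)$ together with the observation, using \eqref{EqnDecompTangentSpaceCompJac}, that $d\Upsilon$ is the identity on the first summand (since $\pi \circ \Upsilon$ is the projection to $V$) and the inclusion of the $T_{\Pic^0(C^\nu)}$-summand on the second. Hence $N$ is smooth of dimension $2(p_a - \delta)$, and $\pi|_N \colon N \to V$ is a smooth surjective morphism with fibres $\Pic^0(C^\nu)$.

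It remains to show that $\sigma_N \coloneqq \sigma_M|_N$ is nondegenerate. First, each fibre $N_C$ is isotropic in $M$: the smooth resolution $\mathsf{sm}\colon P^{\sm} \to \CPic^\delta(C)$ of \cref{LemmaSmoothResolutionPdeltaFibration} is a $(\matP^1)^\delta$-fibration over $\Pic^0(C^\nu)$, and $N_C$ is the image of its zero section; \cref{LemmaPullbackSympFormOnResolutionFibreIsTrivial} then gives $\mathsf{sm}^* \sigma_M|_{\CPic^\delta(C)} = 0$, which restricts to zero on this section. Since $\dim N_C = \tfrac{1}{2} \dim N$, the non-degeneracy of $\sigma_N$ reduces to showing that the pairing
\[
T_C V \otimes T_\calL N_C \longrightarrow \matC, \qquad (v,w) \longmapsto \sigma_M(\tilde v, w),
\]
induced by any lift $\tilde v \in T_x N$ of $v$ (well-defined by isotropy of the fibre), is perfect. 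The main obstacle is identifying this pairing. Under the adjunction isomorphism $T_C B = H^0(C, \omega_C)$, equisingular deformation theory identifies $T_C V$ with $H^0(C^\nu, \omega_{C^\nu}) \hookrightarrow H^0(C, \omega_C)$, while $T_\calL N_C = H^1(C^\nu, \calO_{C^\nu})$. Computing the Mukai form $\sigma_M(\alpha,\beta) = \mathrm{Tr}(\alpha \cup \beta)$ on $\Ext^1_S(\nu_*\calL, \nu_*\calL)$ via Grothendieck duality for the finite morphism $\nu \colon C^\nu \to S$, one expects the induced pairing on $T_C V \otimes T_\calL N_C$ to coincide with Serre duality on $C^\nu$---extending the classical fact that on smooth Lagrangian fibres of $\pi$ the symplectic form restricts to Serre duality on the underlying curve---and this pairing is perfect. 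Granting this identification gives the non-degeneracy of $\sigma_N$; the fibres $N_C$ being then half-dimensional isotropic in the symplectic manifold $N$, they are Lagrangian, and $\pi|_N$ is a Lagrangian subfibration of $\pi$.
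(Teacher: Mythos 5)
Your construction of $N$ and the proof that each fibre $N_C$ is isotropic follow the paper closely (the paper establishes the embedding via flatness plus a fibrewise closed immersion rather than a tangent-space computation, and obtains the simultaneous resolution from Teissier's criterion rather than by blowing up sections --- note the nodes a priori only form a finite \'etale multisection of degree $\delta$ over $V$, not $\delta$ disjoint sections, though this does not affect the construction). The genuine gap is in the non-degeneracy step. You correctly reduce to showing that the pairing $T_{V,[C]}\otimes T_{\calL}N_C\to\matC$ induced by $\sigma_M$ is perfect, but the identification of this pairing with Serre duality on $C^\nu$ is exactly the hard point, and you leave it as an assertion (``one expects\dots'', ``granting this identification\dots''). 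It does not follow by ``restriction'' from the smooth-fibre picture: the point $[F]=\nu_*\calL$ is a deeply singular point of $\CPic^\delta(C)$, the trace pairing on $\Ext^1_S(F,F)$ for a non-locally-free $F$ is not simply Serre duality on $C$, and $H^1(C^\nu,\calO_{C^\nu})$ is a quotient of $H^1(C,\calO_C)$ rather than a subspace, so the proposed Grothendieck-duality computation is precisely the content that would need to be supplied.

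The paper sidesteps this identification entirely. Using the decomposition \eqref{EqnDecompTangentSpaceCompJac}, it shows that the \emph{whole} fibre tangent space $T_{\CPic^\delta(C),[F]}$, of dimension $p_a+\delta$, is orthogonal to $T_{\Pic^0(C^\nu),[\calO_{C^\nu}]}$: the $2\delta$ extra directions $t_i$ pair to zero with any $u\in T_{\Pic^0(C^\nu)}$ because, by \cref{RmkSummandCnuComesFromAnySection}, both $t_i$ and $u$ lift to tangent vectors of $P^{\sm}$ at the \emph{same} preimage $[F_i]$ of $[F]$, and $\mathsf{sm}^*\sigma_M=0$ by \cref{LemmaPullbackSympFormOnResolutionFibreIsTrivial}. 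Since $\sigma_M$ is non-degenerate on the $2p_a$-dimensional space $T_{M,[F]}$, the orthogonal complement of $T_{\Pic^0(C^\nu)}$ has dimension exactly $p_a+\delta$, so it equals $T_{\CPic^\delta(C),[F]}$, and $\sigma_M$ induces an isomorphism $T_{\Pic^0(C^\nu)}\xrightarrow{\sim}\bigl(T_{M,[F]}/T_{\CPic^\delta(C),[F]}\bigr)^\vee\simeq T_{V,[C]}^\vee$. This dimension count delivers the perfectness of your pairing without ever computing it. If you want to keep your route, you must actually prove the Serre-duality identification; otherwise, replace that step by the orthogonality-plus-dimension-count argument.
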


	Before the proof, we state two constructions that we use. Let $\calC\subset S\times V \to V$ be the restriction of the universal family of curves over $B$.
	
	\begin{lemma}\label{LemmaFibrewiseNormalizationCurves}
		The family $\calC$ admits a fibrewise resolution, i.e. there exists a family $\calC^\nu\to V$ of smooth curves and a map $\nu_V\colon \calC^\nu\to\calC$ over $V$ such that the fibre $\nu_C$ of $\nu_V$ over $[C]\in V$ identifies with the normalization $\nu_V\colon \calC^\nu_{[C]}\simeq C^\nu \to C$. Moreover, $\nu_V$ is the normalization map.
	\end{lemma}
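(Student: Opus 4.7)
The plan is to construct $\calC^\nu$ as the normalization of the total space $\calC$ and then verify fiberwise compatibility; the central point is that the singularities of $\calC$ occur in an étale-locally trivial family over $V$.

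First I would set up the picture. Since $V$ parametrizes irreducible curves, the incidence variety $\calC\subset S\times V$ is irreducible; as a Cartier divisor in the smooth threefold $S\times V$ it is Cohen--Macaulay of dimension $p_a-\delta+1$. The singular locus $\calC_{\mathrm{sing}}$ coincides fiberwise with the set of $\delta$ nodes of $C\in V$, and the projection $\calC_{\mathrm{sing}}\to V$ is étale of degree $\delta$. This is the standard equisingularity property of the smooth Severi variety: it follows from identifying $T_{V,[C]}$ with the kernel of the natural map $H^0(C,N_{C/S})\to \bigoplus_{p\in C_{\mathrm{sing}}\cap C} T^1_{C,p}$, which is surjective of corank $\delta$ since $V$ has the expected dimension $p_a-\delta$ everywhere.

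The main structural step, and the main technical obstacle, is to show that $\calC\to V$ is étale-locally trivial near each node. Concretely, for $(p,v_0)\in \calC_{\mathrm{sing}}$ one chooses analytic coordinates $(x,y)$ on $S$ near $p$ so that $C_{v_0}=\{xy=0\}$. The miniversal deformation of the $A_1$-singularity is $xy=s$, and the equisingular locus corresponds to $s=0$. Using smoothness of $V$ of the expected dimension, together with the section $\calC_{\mathrm{sing}}\to V$ (which lets one absorb the translation of the node into the coordinates), one adjusts coordinates étale-locally to bring the defining equation of $\calC$ back to $xy=0$ with no $t$-dependence, yielding an isomorphism $\calC\simeq \{xy=0\}\times V$ over an étale neighborhood of $v_0$. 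This is the versal-deformation input and is where smoothness of $V$ of the expected dimension is genuinely used.

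With this local model in hand, the normalization reads locally as $(\matA^1\sqcup \matA^1)\times V\to \{xy=0\}\times V$, which has smooth source and fiberwise is the normalization of a node. Since normalization is canonical, these local descriptions glue to a smooth scheme $\calC^\nu$ equipped with the normalization map $\nu_V\colon \calC^\nu\to \calC$ of the integral scheme $\calC$. The restriction of $\nu_V$ to any fiber $C\in V$ is finite and birational with smooth source, is an isomorphism away from $C_{\mathrm{sing}}$, and étale-locally at each node is precisely the normalization of $\{xy=0\}$; therefore it realizes the normalization $C^\nu\to C$. Everything beyond the local trivialization is formal.
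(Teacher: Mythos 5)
Your proof is correct in substance, but it takes a genuinely different route from the paper's. The paper disposes of the lemma in two lines by invoking Teissier's theorem on simultaneous resolution: a flat family of reduced curves admits a fibrewise normalization if and only if the $\delta$-invariant (equivalently the geometric genus) is constant, and constancy is immediate here since every fibre is irreducible and $\delta$-nodal; flatness comes from restricting the universal curve. You instead prove the stronger statement that $\calC\to V$ is (étale- or analytically) locally trivial near each node, using the versal deformation $xy=s$ of the $A_1$-singularity together with the smoothness of $V$ in its expected dimension $p_a-\delta$ (equivalently, surjectivity of $H^0(C,N_{C/S})\to\bigoplus_p T^1_{C,p}$, which is the content of Tannenbaum's theorem already cited in the paper), and then builds $\nu_V$ by hand from the local model $(\matA^1\sqcup\matA^1)\times V\to\{xy=0\}\times V$. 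The trade-off: your argument is self-contained and makes the geometry of $\calC^\nu$ completely explicit (in particular it shows directly that $\calC^\nu\to V$ is smooth and that the relative singular locus is étale of degree $\delta$ over $V$), but it genuinely uses equisingularity, which for nodes does require the deformation-theoretic input you flag as the main obstacle; Teissier's criterion bypasses all of this and would survive even if the curves acquired worse planar singularities with constant $\delta$. One small caution: the persistence of each individual node (needed for the classifying map to the miniversal base to vanish, and for $\calC_{\mathrm{sing}}\to V$ to be étale rather than merely finite of constant fibre cardinality) does not follow from the count of nodes alone — it is exactly here that the identification of $T_{V,[C]}$ with the equisingular tangent space is load-bearing, so that step should not be treated as bookkeeping.
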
 
	
	\begin{proof}
	By a result of Teissier \cite[I, Thm. 1.3.2]{TeissierResolutionSimultanee} (see also \cite{DedieuSernesiEquigenericityEquisingularity}), the existence of a fibrewise resolution is equivalent to the family of curves being flat and having constant $\delta$-invariant (or, equivalently, having constant geometric genus). The latter holds in our case because all fibres of $\calC\to V$ are $\delta$-nodal. Since the universal line over $B$ is flat over $B$, the restriction $\calC\to V$ is flat.
	\end{proof}

	\begin{lemma}
		There exists a natural map $\Upsilon\colon \Pic^0_V(\calC^\nu) \to \CPic^\delta_V(\calC)$ over $V$, fibrewise given by the pushforward $\nu_*\colon \Pic^0(C^\nu) \to \CPic^\delta(C)$.
	\end{lemma}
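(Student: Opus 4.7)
The strategy is to construct $\Upsilon$ as a morphism of (étale sheafified) relative Picard functors via relative pushforward along $\nu_V$, then invoke representability (Altman--Kleiman) to obtain the desired morphism of $V$-schemes. The ingredient making this work cleanly is that $\nu_V\colon \calC^\nu \to \calC$ is \emph{finite}, since it is a composition of partial normalizations fibrewise, each pinching two smooth points to a node.

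First, for any $V$-scheme $T \to V$, form the base change $\nu_T \colon \calC^\nu_T \to \calC_T$ of $\nu_V$. Given a line bundle $\calL_T$ on $\calC^\nu_T$ representing a $T$-point of $\Pic^0_V(\calC^\nu)$, define
$$\Upsilon_T(\calL_T) \coloneqq (\nu_T)_* \calL_T.$$
I then need to check that this lands in $\CPic^\delta_V(\calC)(T)$, that is, that $(\nu_T)_* \calL_T$ is a $T$-flat family of rank $1$ torsion-free sheaves on $\calC_T$ with fibrewise degree $\delta$. Flatness over $T$ follows because $\nu_T$ is finite (hence affine), so $(\nu_T)_*\calL_T$ has the same underlying $\calO_T$-module as $\calL_T$ via push-down. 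The fibrewise statement uses flat base change for finite morphisms: for a geometric point $\bar t \in T$, the restriction of $(\nu_T)_*\calL_T$ to $\calC_{\bar t}$ is $\nu_*(\calL_T|_{\calC^\nu_{\bar t}})$, and the pushforward of a line bundle by a finite birational morphism to an integral curve is rank $1$ and torsion-free. Finally, the degree computation is a one-line Euler characteristic calculation: since $\nu$ is finite, $\chi(\nu_*\calL) = \chi(\calL) = 1 - (p_a - \delta)$, so the degree on $\calC_{\bar t}$ equals $\chi(\nu_*\calL) - \chi(\calO_{\calC_{\bar t}}) = \delta$.

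Next, to see that $\Upsilon_T$ descends to the quotient by $\Pic(T)$ and thus defines a natural transformation, I apply the projection formula: for $\calM \in \Pic(T)$ and projections $p_T\colon \calC^\nu_T \to T$, $q_T\colon \calC_T \to T$, one has
$$(\nu_T)_*(\calL_T \otimes p_T^*\calM) \simeq (\nu_T)_*\calL_T \otimes q_T^*\calM,$$
so tensoring by a line bundle pulled back from $T$ is preserved. Naturality in $T$ is clear from the compatibility of pushforward with arbitrary base change along a finite morphism, which also ensures the construction commutes with étale pullback and therefore descends to the étale sheafifications.

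Finally, by Altman--Kleiman $\Pic^0_V(\calC^\nu)$ and $\CPic^\delta_V(\calC)$ represent these étale sheafified functors as $V$-schemes. By Yoneda, the natural transformation just constructed comes from a unique morphism $\Upsilon\colon \Pic^0_V(\calC^\nu) \to \CPic^\delta_V(\calC)$ over $V$, and by construction its fibre over $[C] \in V$ is the map $\calL \mapsto \nu_*\calL$ from $\Pic^0(C^\nu)$ to $\CPic^\delta(C)$. The only delicate point in the whole argument is checking the base change and torsion-freeness properties of the relative pushforward, and both are direct consequences of $\nu_V$ being finite; no cohomology and base change subtleties arise.
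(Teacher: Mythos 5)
Your proposal is correct and follows essentially the same route as the paper: define $\Upsilon$ on the level of functors by pushforward along the base change $\nu_T$ of the fibrewise normalization, verify that the result is a relative rank $1$ torsion-free sheaf (the paper justifies torsion-freeness by the elementary observation that $\Ann_B(m)\subset\varphi^{-1}\Ann_A(m)$ for a ring map $\varphi\colon B\to A$, which is the same content as your finite-birational-pushforward argument), and conclude by representability. Your additional checks (flatness over $T$ via affineness of $\nu_T$, the Euler-characteristic degree count giving $\delta$, and the projection formula for descent through $\Pic(T)$) are correct details that the paper leaves implicit.
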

	
	\begin{proof}
		On the level of functors, we define for any $T\to V$ the map
		$$\fonction{\Upsilon}{\Pic_{\calC^\nu/V}(T)}{\CPic_{\calC/V}(T)}{\calL\in\Coh(\calC^\nu_T)}{(\nu_T)_*\calL\in\Coh(\calC_T)}$$
		where $\nu_T\colon \calC^\nu_T \to \calC_T$ is the base change of $\nu_V\colon \calC^\nu \to \calC$ along $T\to V$.
		
		This map is well defined. Indeed, assume that $\calL$ is a coherent sheaf which is fibrewise locally free over $T$. Then $(\nu_T)_*\calL$ is of rank $1$ since $\nu_V$ is birational. Hence it suffices to prove that $(\nu_T)_*\calL$ is fibrewise torsion free. This is a consequence of the general following remark. Let $A,B$ be two rings $\varphi\colon B\to A$ a homomorphism, and $M$ a $A$-module. Then, for any $m\in M$, $\Ann_B(m)\subset \varphi^{-1}\Ann_A(m)$. 

	\end{proof}

	

	\begin{proof}[Proof of \cref{ThmSubfibrationJacobianNormalizations}]
	
	Recall from (\ref{EqnModuliK3IsRelCompJac}) that $\CPic_V^\delta(\calC)\simeq M_V$. We prove that $\Upsilon$ is a closed immersion. Since both $\Pic^0_V(\calC^\nu)$ and $M_V$ are flat over $V$ (recall that $\Pic^0_V(\calC^\nu)$ is smooth over $V$ \cite[Prop. 5.19]{KleimanPicardScheme}), by \cite[\href{https://stacks.math.columbia.edu/tag/039A}{Tag 039A}]{stacks-project} it is enough to prove that $\Upsilon\colon \Pic^0_V(\calC^\nu)\to M_V$ is fibrewise a closed immersion. Hence we just need to prove that $\nu_*\colon \Pic^0(C^\nu)\to \CPic^\delta(C)$ is a closed immersion for each $C\in V$. The latter holds because $\nu_*$ maps $\Pic^0(C^\nu)$ isomorphically onto the unique codimension $\delta$ orbit of the action of $\Pic^0(C)$ in $\CPic^\delta(C)$ (see the proof of \cref{PropOrbitsDecompSingLocusCompJacobian}).
	
	
	Now we must show that the restriction $\sigma_N$ of $\sigma_M$ on $N\coloneqq \Upsilon(\Pic^0_V(\calC^\nu))\subset M$ is non-degenerate. Fix a curve $[C]\in V$. By the action of $\Pic^0(C)$, it is enough to prove that $\sigma_N$ is non-degenerate on $T_{N,[\nu_*\calO_{C^\nu}]}$. Denote $F\coloneqq\nu_*\calO_{C^\nu}$. Since $M$ is smooth at $[F]$, the symplectic form $\sigma_M$ restricts to a symplectic form $\sigma_{[F]}$ on the tangent space $T_{M,[F]}$, which by (\ref{EqnDecompTangentSpaceCompJac}) decomposes as
	\begin{eqnarray} \label{eq:decomp.tangent.moduli}
	T_{M,[F]} \simeq T\oplus T_{\CPic^\delta(C),[F]} \simeq  T \oplus T_{\Pic^0(C^\nu),[\calO_{C^\nu}]}\oplus \bigoplus_{i=1}^{2\delta}\matC t_i,
	\end{eqnarray}
	where $T$ is a chosen lift of the subspace $\im(d\pi_{[F]})\subset T_{B,C}$, and the $t_i$'s are the images of vectors tangent to preimages $[F_i]$ of $[F]$ by a smooth resolution $\mathsf{sm}\colon P^{\sm}\to \CPic^d_C$. 
	\begin{lemma}\label{LemmaImageDiffIsTangV}
		We have $\im(d\pi_{[F]})=T_{V,[C]}$.
	\end{lemma}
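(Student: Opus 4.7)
The plan is to prove both inclusions $T_{V,[C]} \subseteq \im(d\pi_{[F]})$ and $\im(d\pi_{[F]}) \subseteq T_{V,[C]}$ separately: the first will come from the smoothness of $\pi|_N$, while the second will be forced by a dimension count based on the description of $T_{\CPic^\delta(C),[F]}$ from \S \ref{SectionNodalCurves}.

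First, I would observe that under the identification $\Upsilon\colon \Pic^0_V(\calC^\nu) \xrightarrow{\sim} N$ established in the previous step of the proof, the restriction $\pi|_N \colon N \to V$ is just the structural map of the relative Picard scheme $\Pic^0_V(\calC^\nu) \to V$. Since $\calC^\nu \to V$ is a smooth projective family of curves of genus $p_a - \delta$ over the smooth base $V$, this structural map is smooth of relative dimension $p_a - \delta$, and its differential at $[F]$ is surjective onto $T_{V,[C]}$. As this differential factors through $T_{N,[F]} \hookrightarrow T_{M,[F]} \xrightarrow{d\pi_{[F]}} T_{B,[C]}$, the inclusion $T_{V,[C]} \subseteq \im(d\pi_{[F]})$ follows.

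For the reverse inclusion, I would argue by dimensions. By (\ref{EqnModuliK3IsRelCompJac}), the scheme-theoretic fibre $\pi^{-1}([C])$ is $\CPic^\delta(C)$, so $\ker(d\pi_{[F]}) = T_{\CPic^\delta(C),[F]}$. Since $F = \nu_* \calO_{C^\nu}$ fails to be locally free at every one of the $\delta$ nodes of $C$, the decomposition (\ref{EqnDecompTangentSpaceCompJac}) applies with $m = \delta$ and gives $\dim T_{\CPic^\delta(C),[F]} = (p_a - \delta) + 2\delta = p_a + \delta$. Combined with $\dim T_{M,[F]} = 2p_a$, which holds because $M$ is smooth at the stable sheaf $[F]$ (see \cref{RmkPureDim1Rk1SheafIrredCurveIsStable}), this gives $\dim \im(d\pi_{[F]}) = 2p_a - (p_a + \delta) = p_a - \delta$. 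Since $V$ is smooth of dimension $p_a - \delta$, we have $\dim T_{V,[C]} = p_a - \delta$, and so the inclusion from the previous paragraph must be an equality.

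The main subtle ingredient is the identification $\ker(d\pi_{[F]}) = T_{\CPic^\delta(C),[F]}$ together with its dimension. The latter relies on the detailed analysis of the tangent space along the singular locus of the compactified Jacobian carried out in \S \ref{SectionNodalCurves}, and in particular on the fact that $F$ lies in the deepest codimension-$\delta$ orbit of the $\Pic^0(C)$-action, which is precisely what produces the extra $2\delta$ tangent directions transverse to $T_{\Pic^0(C^\nu)}$.
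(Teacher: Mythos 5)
Your proof is correct and follows essentially the same route as the paper: the inclusion $T_{V,[C]}\subseteq \im(d\pi_{[F]})$ comes from the smoothness of $\pi_N\colon N\to V$, and the reverse inclusion from the dimension count $\dim T_{\CPic^\delta(C),[F]}=p_a+\delta$ (the case $m=\delta$ of (\ref{EqnDecompTangentSpaceCompJac})) together with $\dim T_{M,[F]}=2p_a$. The only cosmetic difference is that you phrase the count via $\ker(d\pi_{[F]})=T_{\CPic^\delta(C),[F]}$ and rank--nullity, whereas the paper reads it off the direct-sum decomposition (\ref{eq:decomp.tangent.moduli}); these are the same argument.
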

	\begin{proof}
		We know that $\pi_N\colon N \to V$ is smooth, hence $d\pi_N$ is everywhere surjective, in particular $\im(d\pi_{N,[F]})=T_{V,[C]} \subset \im(d\pi_{[F]})$. But from (\ref{eq:decomp.tangent.moduli}) we have  $\dim T_{M,[F]}=2p_a=\dim T+p_a+2\delta$, i.e. $\dim \im(d\pi_{[F]})=\dim T=p_a-\delta=\dim T_{V,[C]}$ and the result follows.
	\end{proof}
 
	Pick any vector $u\in T_{\Pic^0(C^\nu),[\calO_{C^\nu}]}\subset T_{M,[F]}$. Then for any $i=1,\dots,2\delta$, both $t_i$ and $u$ are of the form $t_i=\text{d}\mathsf{sm}_{[F_i]}t_i'$, $u=\text{d}\mathsf{sm}_{[F_i]}u_i'$ for some $t_i',u_i'\in T_{P^{\sm},[F_i]}$ (\cref{RmkSummandCnuComesFromAnySection}). Since $\mathsf{sm}^*(\sigma_M)=0$ by \cref{LemmaPullbackSympFormOnResolutionFibreIsTrivial} we get
	\begin{eqnarray*}
	\sigma_M(t_i,u)&=& \sigma_M(\text{d}\mathsf{sm}_{[F_i]}t_i',\text{d}\mathsf{sm}_{[F_i]}u_i') \\
	&=&\mathsf{sm}^*\sigma_M(t_i',u_i') \\
	&=& 0.
	\end{eqnarray*}
	In other words, we get $T_{\CPic^\delta(C),[F]}\subseteq T_{\Pic^0(C^\nu),[\calO_{C^\nu}]}^\perp$ and by a dimension count, equality must hold.
 Therefore the sympletic form $\sigma_M$ induces an isomorphism
	$$T_{\Pic^0(C^\nu),[\calO_{C^\nu}]} \xrightarrow{\sim} T_{M,[F]}/T_{\CPic^\delta(C),[F]} \simeq T.$$
Therefore $\sigma_M$ restrict to a sympletic form on $N$ and the restricted map $\pi_N\colon N\to V$ is a smooth Lagrangian fibration.
	
\end{proof}

%

\begin{remark}\label{RmkSubfibJacobianAnyDegree}
	With the choice $d=\delta$, $N\simeq \Pic^0_V(\calC^\nu)$ is an abelian scheme over $V$, with a $0$-section given over $C\in V$ by $\calO_{C^\nu}$. However, the same proof can be performed for arbitrary $d$. In this case, the subvariety $N\simeq \Pic^{d-\delta}_V(\calC^\nu)$ is a priori only a torsor over $\Pic^0_V(\calC^\nu)$.
	
\end{remark}

\subsection{Subfibration by Prym varieties}\label{SectionSubfibPrymVar}

In this section, we go further and construct a Lagrangian subfibration $P\subset N$ whose fibres consist of Prym varieties.

We use the notation of \S \ref{SectionCompactifiedJacobianLagrangianFibrations}. Let $p\colon S\to \matP^2$ be a double cover ramified in a general sextic $D_6\subset \matP^2$ (in particular $\Pic(S)=\matZ H_S$), denote $\iota\colon S\to S$ the (anti-symplectic) covering involution and $\Gamma\subset S$ the ramification locus. Fix some $k\in\matZ_{>0}$, $k\neq 6$ and set $\delta\coloneqq 3k$. We assume the following:
\vspace{6pt}

{\itshape 
	There exists a smooth locally closed subset $\calV\subset |kH_{\matP^2}|$ of codimension $\delta$ parametrizing smooth plane curves of degree $k$ which intersect $D_6$ in $\delta$ points of multiplicity $2$. 
}
\vspace{6pt}

\noindent This assumption holds for $k=5$, as proved \cref{LemmaLinearSystemD515TangD6isdim5}.

\begin{lemma}\label{LemmaMult2IntImpliesNodalOnK3}
	For each curve $D\in\calV$, the preimage $C=p^{-1}(D)$ is integral $\delta$-nodal, with nodes lying on $\Gamma$.
\end{lemma}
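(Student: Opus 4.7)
My plan is to split the analysis into a local one at the preimages of the tangency points and a global intersection-theoretic argument to establish integrality. First, since $p$ is \'etale outside $\Gamma$ and $D$ is smooth, $C := p^{-1}(D)$ is automatically smooth over $D \mysetminus D_6$. The singular locus of $C$ is therefore contained in $C \cap \Gamma = p^{-1}(D \cap D_6)$, a set of exactly $\delta = 3k$ points (one above each tangency point, as $p$ is bijective on the branch locus).

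At a tangency point $P \in D \cap D_6$, I would fix local analytic coordinates $(u,v)$ on $\matP^2$ near $P$ with $D_6 = \{v = 0\}$ (possible since $D_6$ is smooth), so that $p$ is analytically modelled by $(u,y) \mapsto (u, y^2)$ near $q := p^{-1}(P)$, with $\Gamma = \{y = 0\}$. The hypothesis that $D$ is smooth at $P$ and meets $D_6$ with multiplicity exactly $2$ forces $T_P D = T_P D_6$, so $D$ is locally the graph $v = \varphi(u)$ with $\varphi(u) = au^2 + O(u^3)$, $a \neq 0$. Substituting $v = y^2$, the curve $C$ is locally cut out by $y^2 - au^2 + O(u^3)$, whose degree-$2$ part is the non-degenerate form $y^2 - au^2$; this exhibits $q$ as an $A_1$-singularity (node) of $C$ lying on $\Gamma$, and in particular $C$ is reduced at $q$. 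Together with smoothness of $C$ over $D \mysetminus D_6$, this shows $C$ is reduced with exactly $\delta$ nodes, all on $\Gamma$.

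It remains to rule out reducibility. Suppose for contradiction that $C = C_1 \cup C_2$ splits into two distinct reduced components. Since $p|_C$ has degree $2$ over the irreducible $D$, each $p|_{C_i}$ has degree $1$; being a finite birational morphism onto the smooth (hence normal) target $D$, it is an isomorphism, so each $C_i$ is smooth with $C_i \cong D$. Consequently all $\delta$ nodes of $C$ lie in $C_1 \cap C_2$ and are transverse intersections, giving $C_1 \cdot C_2 = \delta = 3k$. The covering involution $\iota$ swaps $C_1$ and $C_2$ and fixes $H_S \in \Pic(S) = \matZ H_S$, so $[C_1] = [C_2] = \tfrac{k}{2}H_S$ (which already requires $k$ even); then $C_1 \cdot C_2 = \bigl(\tfrac{k}{2}\bigr)^2 H_S^2 = \tfrac{k^2}{2}$, and $\tfrac{k^2}{2} = 3k$ forces $k = 6$, contradicting the assumption $k \neq 6$.

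The most delicate step is the irreducibility argument: it relies crucially on $\Pic(S) = \matZ H_S$ (a consequence of the genericity of $D_6$) and is precisely where the exclusion $k \neq 6$ enters, since $k = 6$ is the sole positive integer solving $k^2/2 = 3k$. The local nodal analysis at the tangencies is routine once one chooses analytic coordinates adapted to the order-$2$ tangency of $D$ with $D_6$.
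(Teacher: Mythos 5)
Your proof is correct and follows essentially the same two-step strategy as the paper's: a local coordinate computation showing that each point of $p^{-1}(D\cap D_6)$ is an $A_1$-singularity lying on $\Gamma$, followed by ruling out a splitting $C=C_1\cup C_2$ using $\Pic(S)=\matZ H_S$ and the exclusion $k\neq 6$. The only (equivalent, via adjunction) difference is in the final numerical contradiction: you compare the node count $\delta=3k$ with the intersection number $C_1\cdot C_2=k^2/2$, whereas the paper compares the genus of $D$ with that of a curve in $|(k/2)H_S|$; both single out $k=6$ as the unique bad value.
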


\begin{proof}
	The composition of the normalization $C^\nu\to C$ and $C\to D$ is a double cover, possibly ramified over $D\cap D_6$. In particular, the singular points of $C$ lie over $\Gamma$, and their preimages in $C^\nu$ consist of $1$ or $2$ points. In order to prove that these singular points are ordinary double points, it is enough to prove that the tangent cone of $C$ at a singular point is smooth and spans the tangent space of $S$. Since the statement about the type of singularities is local, we replace $\matP^2$ with $\matA^2$ with coordinates $x,y$, and we assume that $D=V(g)$, $D_6=V(f)$ for $g$ and $f$ polynomials in $x,y$ of degree $k$ and $6$ respectively. Given $p\in D\cap D_6$, we can assume $p=(0,0)\in D\cap D_6$, and locally $f=y- a_2x^2-\cdots -a_6x^6$ and $g=y- b_2x^2-\cdots -b_kx^k$, with $a_2\neq b_2$. The double cover $S$ is locally given by $z^2=f$ in $\matA^3$ with coordinates $x,y,z$. Hence, $C$ is locally given by the ideal 
	$$I=(y-\sum_i b_ix^i,z^2-y+\sum_i a_ix^i)=(y-\sum_i b_ix^i, z^2+\sum_i(a_i-b_i)x^i).$$
	The tangent cone of $C$ at $(0,0,0)$ is then given by the ideal of initial terms of polynomials in $I$, which turns out to be $J=(y,z^2+(b_2-a_2)x^2)$. We conclude that the singularity of $C$ at $(0,0,0)$ is an ordinary double point.
	
	To conclude, we need to prove that $C$ is irreducible. Since $D$ is irreducible, any subcurve of $C$ dominates $D$. Since $C\to D$ is a double cover, $C$ has at most two components. Assume for the sake of contradiction that $C=C_1\cup C_2$. Each component $C_i$ admits a degree $1$ map to $D$ and is smooth outide of its intersection $Z=\Gamma\cap C_i$ with $\Gamma$. Since $C$ admits only  $A_1$-singularities on $Z$, $C_1$ and $C_2$ must be smooth and intersecting transversally on $\Gamma$. In particular, $C_1$ and $C_2$ are isomorphic to $D$, and therefore have genus $\frac{1}{2}(k^2-3k)+1$. Since $\Pic(S)=\matZ H_S$, we have $C_i\in |(k/2)H_S|$ and thus $g(C_i)=k^2/4+1$. The only possibility is $k=6$, which we ruled out in the assumption.
\end{proof}

\begin{remark}
The proof of \cref{LemmaMult2IntImpliesNodalOnK3} reveals that higher multiplicity intersections of $D_6\cap D$ (i.e. $a_2=b_2$) lead to worse singularities on $S$. 
\end{remark}

In the following, we identify $\calV$ with its image $p^*\calV\subset p^*|kH_{\matP^2}|\subset B\coloneqq |kH_S|$. Let $\calC\to \calV$ be the family of nodal curves,  $\calC^\nu \to \calV$ be its fibrewise normalization (\cref{LemmaFibrewiseNormalizationCurves}), and $\calD\to \calV$ be the family of smooth plane curves of degree $k$. The composition $\calC^\nu \to \calD$ is a family of \'etale double covers over $\calV$, leading to a relative Prym variety $\Prym_\calV(\calC^\nu/\calD)$ defined as the identity component of the fixed locus of $\tau_{\calC^\nu}\coloneqq -\iota^*$, where the morphism $-1$ (well defined since $\Pic^0_\calV(\calC^\nu)$ is a group scheme) is a relative version of $(-)^\vee$. We refer to \cite[\S 4.1]{LazaSaccaVoisinHKCompactificationIntermediateJacobianFibrationCubic4fold} for more details about relative Prym varieties.

Finally, we set $M\coloneqq M_S(0,kH_S,3k-k^2)\to B$ the Beauville--Mukai system. Note that the curves $C\in |kH_S|$ have genus $p_a\coloneqq k^2+1$.
Following \cite{ASFRelativePrymVar}, the involution $\iota$ induces an anti-symplectic birational map
$$\iota^*\colon M \dashrightarrow M, F\mapsto \iota^*F$$
which is defined (and an isomorphism) when restricted to $M_{\calV}$. 
Again in \textit{loc. cit.}, \S $3.5$, the authors construct a birational map
$$\kappa\colon M\dashrightarrow M, F \mapsto \calE xt^1(F,\calO_S(\Gamma-kH_S))$$
which also restricts to an automorphism of $M_\calV$ (a direct computation 
shows that the Mukai vector $(0,kH_S,3k-k^2)$ is indeed preserved). The map $\kappa$ plays the role of a relative (twisted) dual for the torsion sheaves. Indeed, since any $C\in \calV$ has only planar singularities, it is Gorenstein, in particular $C$ admits a dualizing sheaf $\omega_C\simeq i_C^*\calO_S(kH_S)$ for $i_C\colon C\hookrightarrow S$ the closed immersion, see \cite[Chap. II, section $1$]{BarthPetersVandeVenCompactComplexSurfaces}. As proved in \cite[Lemma 3.7]{ASFRelativePrymVar}, there is an isomorphism
\begin{eqnarray}\label{EqnExt1HomTwistedDual}
\calE xt^1\left((i_C)_*F,\calO_S(\Gamma-kH_S)\right)\simeq (i_C)_*\calH om(F,i_C^*\Gamma).
\end{eqnarray}

\begin{lemma}[\cite{ASFRelativePrymVar}, \S $3.3$]
	In the notation of \cref{ThmSubfibrationJacobianNormalizations}, the map $\tau\coloneqq\kappa\circ \iota$ restricts to an involution
	$$\tau_N\colon N \xrightarrow{\sim} N.$$
\end{lemma}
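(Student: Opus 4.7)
My plan is to verify the statement pointwise on the fibres of $\pi_N\colon N\to \calV$: for each $C\in\calV$ I would show that $\tau$ preserves $N_C=\nu_*\Pic^0(C^\nu)$, the unique codimension-$\delta$ orbit of $\Pic^0(C)$ in $\CPic^\delta(C)$ (see the proof of \cref{PropOrbitsDecompSingLocusCompJacobian}). Since $\tau$ is already a regular automorphism of $M_\calV$ and $N\subset M_\calV$ is closed, set-theoretic containment is enough, and the explicit formula I extract will display $\tau_N$ as an involution.

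The effect of $\iota^*$ is immediate. By \cref{LemmaMult2IntImpliesNodalOnK3}, the nodes of $C=p^{-1}(D)$ lie on $\Gamma=\Fix(\iota)$, so $\iota$ preserves $C$ and lifts to the covering involution of $C^\nu\to D$; hence $\iota^*(\nu_*\calL)\simeq \nu_*(\iota^*\calL)\in N_C$ for every $\calL\in\Pic^0(C^\nu)$.

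For the action of $\kappa$, I would combine (\ref{EqnExt1HomTwistedDual}) with Grothendieck duality for the finite morphism $\nu\colon C^\nu\to C$, which yields
\[
\calH om_C(\nu_*\calM,\calN)\simeq \nu_*\bigl(\calM^\vee\otimes\nu^*\calN\otimes \nu^!\calO_C\bigr)
\]
for any line bundle $\calN$ on $C$. The computation then rests on two geometric inputs: the standard formula $\omega_{C^\nu}\simeq \nu^*\omega_C(-E)$ for the normalization of a nodal curve gives $\nu^!\calO_C\simeq \calO_{C^\nu}(-E)$, where $E=\sum_i(x_i+y_i)$ is the divisor of preimages of the nodes; and the local analysis carried out in \cref{LemmaMult2IntImpliesNodalOnK3} shows that the section $z$ cutting out $\Gamma$ restricts to a section of $i_C^*\calO_S(\Gamma)$ whose pullback to $C^\nu$ vanishes to order one along $E$, so $\nu^*i_C^*\calO_S(\Gamma)\simeq \calO_{C^\nu}(E)$. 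The two divisorial contributions cancel and yield $\kappa(\nu_*\calM)\simeq \nu_*(\calM^\vee)$.

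Combining the two computations, $\tau(\nu_*\calL)\simeq \nu_*\bigl((\iota^*\calL)^\vee\bigr)\in N_C$, which proves $\tau(N)\subseteq N$ and, since $\calL\mapsto (\iota^*\calL)^\vee$ squares to the identity on $\Pic^0(C^\nu)$, exhibits $\tau_N$ as an involution. The main technical obstacle is the Grothendieck duality identification $\nu^!\calO_C\simeq \calO_{C^\nu}(-E)$ for the non-lci normalization map, which must be matched cleanly with the pulled-back ramification divisor; the remainder amounts to bookkeeping with the tangency assumption defining $\calV$.
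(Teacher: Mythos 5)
Your argument only establishes the statement over the sublocus $\calV\subset V$, whereas the lemma (stated ``in the notation of \cref{ThmSubfibrationJacobianNormalizations}'') concerns $N$ over the whole Severi component $V$ of $\delta$-nodal curves, and the subsequent \cref{ThmPrymSubfibrationOverNodalCurves} genuinely needs this: $\Fix^0(N)$ is defined as a component of the fixed locus of $\tau_N$ on all of $N$, and Step 3 of that proof compares $\dim\pi(\Fix^0(N))$ with $\dim\calV$. Both of your key computations break down off $\calV$. For a general $C\in V$ the nodes do not lie on $\Gamma$ and $\iota(C)\neq C$, so $\iota$ does not ``preserve $C$ and lift to the covering involution''; one must instead argue, as the paper does, that $\iota^*F$ is supported on the $\delta$-nodal curve $\iota(C)$ and is non-locally free over each of its nodes, hence again lies in $N$. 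Likewise your cancellation $\nu^*i_C^*\calO_S(\Gamma)\simeq\calO_{C^\nu}(E)$ uses precisely the tangency condition defining $\calV$; for $C\in V\mysetminus\calV$ the twist $\nu^*i_C^*\calO_S(\Gamma)\otimes\nu^!\calO_C$ is merely some degree-zero line bundle on $C^\nu$ (the degrees $6k$ and $-2\delta=-6k$ do cancel), so the qualitative conclusion $\kappa(\nu_*\calM)\in\nu_*\Pic^0(C^\nu)$ could still be salvaged, but the clean formula $\kappa(\nu_*\calM)\simeq\nu_*(\calM^\vee)$ on which your involution claim rests does not hold there.

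The paper avoids all of this with a softer argument: by (\ref{EqnExt1HomTwistedDual}), $\kappa$ restricts to an automorphism of $\CPic^\delta(C)$, which must then preserve the unique closed orbit $\nu_*\Pic^0(C^\nu)$ because that orbit is intrinsically characterized as the locus where the tangent space of $\CPic^\delta(C)$ has maximal dimension $p_a+\delta$; and $\tau^2=\Id$ follows from reflexivity of pure $1$-dimensional sheaves on a surface together with the fact that $\iota^*$ and $\kappa$ commute, with no fibrewise formula needed. Your explicit duality computation is not wasted --- it is essentially \cref{LemmaExt1IsPushforwardHom}, which the paper proves (via Grothendieck--Verdier duality for $\mathsf{n}=i_C\circ\nu$ into the surface rather than through $\nu^!\calO_C$ for the non-lci normalization) and uses later to identify $\Fix^0(N)_\calV$ with the relative Prym --- but as a proof of this lemma it covers only a positive-codimension slice of $N$.
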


\begin{proof}
	First we prove that $\tau$ preserves $N$. Pick a sheaf $F\in N$, so that $F$ is a torsion-free rank $1$ sheaf supported on a $\delta$-nodal curve $C\in V$ (recall that $V$ denotes some component of the Severi variety of $\delta$-nodal curves), such that $F$ is non-locally free over each of the $\delta$ nodes of $C$. Then $\iota^*F$ is supported on $\iota(C)$ (which is also $\delta$-nodal), and it must be non-locally free over each node of $\iota(C)$, hence $\iota^*F\in N$. 
	
	On the other hand, from (\ref{EqnExt1HomTwistedDual}) the sheaf $\calE xt^1(F,\calO_S(\Gamma-C))$ is still a torsion-free rank $1$ sheaf of same degree supported on the same curve $C$, so $\kappa$ restricts to an isomorphism $\kappa_C\colon \CPic^\delta(C) \xrightarrow{\sim} \CPic^\delta(C)$. Therefore $\kappa_C$ must preserves the only closed orbit $O=\nu_*\Pic^0(C^\nu)\subset \CPic^{\delta}(C)$, as this orbit consists of the only points where the tangent space of $\CPic^{\delta}(C)$ has maximal dimension $p_a+\delta$ (in other words, this orbit is the "most singular" part of $\CPic^{\delta}(C)$). We conclude that $\kappa$ preserves $N$ as well.
	
	Finally, the property $\tau^2(F)=F$ is a consequence of the fact that $\kappa^2(F)=F$ (a pure dimension $1$ sheaf on a surface is always reflexive \cite[Prop. 1.1.10]{HuybrechtsLehnModuliofsheaves}) and that $\kappa$ and $\iota$ commute (local Hom and flat pullback commute).
\end{proof}

Let $\Fix^0(N)$ be the connected component of the fixed locus in $N$ of the action of $\tau_N$ that contains the image of the $0$-section of $\Pic^0_\calV(\calC^\nu)\to \calV$. Denote $\Fix^0(N)_\calV=\Fix^0(N)\cap \pi^{-1}(\calV)$ the restriction over $\calV$.

\begin{theorem}\label{ThmPrymSubfibrationOverNodalCurves}
	The locus $\calV$ is an open subset of $\pi(\Fix^0(N))$. The map $\Upsilon\colon \Pic^0_V(\calC^\nu) \xrightarrow{\sim} N$ restrict to an isomorphism $\Prym_\calV(\calC^\nu/\calD) \xrightarrow{\sim} P\coloneqq \Fix^0(N)_\calV$.
	
	Moreover, the restriction of $\sigma_N$ to $P$ is still non-degenerate. In particular, $P$ is a smooth symplectic subvariety of $N$ and $\pi_P\colon P \to \calV$ describes a Lagrangian subfibration of $\pi_N\colon N\to V$.
\end{theorem}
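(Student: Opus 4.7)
The plan is to reduce the Prym subfibration statement to an equivariance property of the closed immersion $\Upsilon$ from \cref{ThmSubfibrationJacobianNormalizations}, and then to exploit the fact that $\tau_N$ is a symplectic involution.

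First, I would check that $\Upsilon$ intertwines the involutions $\tau_{\calC^\nu}=-\iota^*$ on $\Pic^0_\calV(\calC^\nu)$ and $\tau_N=\kappa\circ\iota^*$ on $N$. The $\iota^*$-part is immediate: each $C\in\calV$ is $\iota$-invariant, the involution lifts to $\tilde\iota\in\Aut(C^\nu)$ exchanging the two preimages of every node, and $\iota^*(\nu_*L)=\nu_*(\tilde\iota^*L)$. For the $\kappa$-part, one combines the identification (\ref{EqnExt1HomTwistedDual}) with Grothendieck duality along the finite birational morphism $\nu\colon C^\nu\to C$: the twist by $\calO_S(\Gamma-kH_S)$ is arranged precisely so that, once the relative dualizing sheaf of the normalization and the divisor $\Gamma\cap C$ (which, for $C=p^{-1}(D)$, is exactly the branch locus of $C^\nu\to D$) are accounted for, $\kappa(\nu_*L)\simeq \nu_*(L^\vee)$ up to a $\tau_{\calC^\nu}$-invariant shift. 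Composing, $\tau_N\circ\Upsilon=\Upsilon\circ\tau_{\calC^\nu}$.

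Given equivariance, the identification $\Upsilon(\Prym_\calV(\calC^\nu/\calD))=P$ is a formal consequence. The zero-section of $\Pic^0_\calV(\calC^\nu)$ is $\tau_{\calC^\nu}$-fixed and maps to the distinguished section $C\mapsto \nu_*\calO_{C^\nu}$ which by construction lies in $\Fix^0(N)$; since $\Upsilon$ is a closed immersion with $P\subseteq N_\calV$, the preimage $\Upsilon^{-1}(P)$ is a union of components of the $\tau_{\calC^\nu}$-fixed locus containing the zero-section, hence equals $\Prym_\calV(\calC^\nu/\calD)$. The openness of $\calV$ in $\pi(\Fix^0(N))$ follows from a dimension count: over $\calV$ the fixed locus in each fibre is the Prym of the étale cover $C^\nu\to D$ of dimension $\tfrac{1}{2}(k^2-3k)$, while over the complement of $\calV$ in $V$ the covering $C^\nu\to D$ degenerates, forcing a drop of fibre dimension.

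For the symplectic assertion I would write $\tau_N$ as a composition of two anti-symplectic maps: $\iota^*$ is anti-symplectic because $\iota$ is an anti-symplectic involution of $S$, while $\kappa$, being a relative twisted dual on pure one-dimensional sheaves, is anti-symplectic by standard properties of moduli spaces of sheaves on K3 surfaces (see \cite{ASFRelativePrymVar}). Hence $\tau_N$ is symplectic on $N$, and the fixed locus of a symplectic involution of a symplectic manifold is a smooth symplectic submanifold, so $\sigma_N|_P$ is non-degenerate. The Lagrangian property is then almost automatic: the fibres of $\pi_P$ sit inside the fibres of $\pi_N$, which are Lagrangian in $N$ by \cref{ThmSubfibrationJacobianNormalizations}, hence $\sigma_P$ vanishes on $\pi_P$-fibres, and the dimension identity $\dim\Prym_\calV(\calC^\nu/\calD)=g(C^\nu)-g(D)=\tfrac{1}{2}\dim P$ upgrades these isotropic fibres to Lagrangian ones.

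The main technical obstacle I anticipate is the equivariance computation in the first step: one has to track carefully how the twist $\calO_S(\Gamma-kH_S)$ interacts under Grothendieck duality with the relative dualizing sheaf of the normalization $C^\nu\to C$, in order to see that $\kappa$ translates exactly into the sign involution $(-)^\vee$ on $\Pic^0(C^\nu)$. Once that is in place, the symplectic and Lagrangian claims reduce to general principles.
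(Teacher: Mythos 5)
Your overall architecture coincides with the paper's: the key technical input is the equivariance of $\Upsilon$, which the paper establishes as \cref{LemmaExt1IsPushforwardHom} by Grothendieck--Verdier duality along $\mathsf{n}=i_C\circ\nu$, using $\nu^*\omega_C\simeq\omega_{C^\nu}(\sum_i x_i+y_i)$ and $\mathsf{n}^*\calO_S(\Gamma)\simeq\calO_{C^\nu}(\sum_i x_i+y_i)$; the symplectic assertion then follows, exactly as you say, from $\tau$ being symplectic and the fixed locus of a symplectic involution being smooth and symplectic; and the Lagrangian claim follows from isotropy of the fibres plus a dimension count. One substantive caveat on your first step: you allow $\kappa(\nu_*\calL)\simeq\nu_*(\calL^\vee)$ only ``up to a $\tau_{\calC^\nu}$-invariant shift'', but to identify the fixed component through the zero-section with the Prym variety itself (the identity component), rather than with a torsor over it, you must show the shift is trivial; the twist $\calO_S(\Gamma-kH_S)$ is chosen precisely so that the two copies of $\calO_{C^\nu}(\sum_i x_i+y_i)$ cancel and the identification holds on the nose.

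The genuine gap is your argument for the openness of $\calV$ in $\pi(\Fix^0(N))$. The claim that ``over the complement of $\calV$ in $V$ the covering $C^\nu\to D$ degenerates, forcing a drop of fibre dimension'' fails on two counts. First, over a general point of $V\mysetminus\calV$ there is no covering to degenerate: such a curve $C$ is not $\iota$-invariant, so $\tau_N$ sends $N_{[C]}$ to $N_{[\iota(C)]}$ and the fibrewise fixed locus is empty; the locus that must actually be controlled is $\pi(\Fix^0(N))\mysetminus\calV$, which could a priori consist of other $\iota$-invariant $\delta$-nodal curves (for instance $p^*D$ with $D$ itself nodal and fewer tangencies to $D_6$), and these form families whose dimension your argument does not bound. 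Second, and more importantly, a drop of fibre dimension over the complement is the wrong kind of statement: what openness requires is $\dim\pi(\Fix^0(N))=\dim\calV$, and small fibres over a strictly larger base is exactly the scenario that would make $\calV$ fail to be open in the image. The paper closes this by a symplectic dimension count: $\Fix^0(N)$ is smooth and symplectic, its fibres over its image lie inside the Lagrangian fibres of $\pi_N$ and are therefore isotropic in $\Fix^0(N)$, so their dimension is at most half of $\dim\Fix^0(N)$; combined with the smoothness of $\Fix^0(N)\to\calV$ over an open subset of $\calV$, this forces $\dim\pi(\Fix^0(N))=\dim\calV=\tfrac{1}{2}(k^2-3k)$, whence openness. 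Some argument of this kind is needed to complete your proof.
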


\begin{proof}

	\textbf{First step:} By \cite[Prop. 3.11]{ASFRelativePrymVar}, $\tau$ is symplectic, hence its restriction $\tau_N$ to $N$ is also symplectic. Since any component of the fixed locus of a symplectic involution on a smooth symplectic variety is again symplectic and smooth, we conclude that $\Fix^0(N)$ is symplectic (with respect to the symplectic structure induced from $M$) and smooth.
	
	\textbf{Second step:} We prove that $\Prym_\calV(\calC^\nu/\calD)$ lands by $\Upsilon$ in $\Fix^0(N)$. Since $\Prym_\calV(\calC^\nu/\calD)$ is irreducible and contains the $0$-section of $\Pic^0_\calV(\calC^\nu)$, it suffices to proves that it lands in the fixed locus of $\tau$. Pick $C\in \calV$, $\calL\in \Prym(C^\nu/D)$ where $D\coloneqq p(C)$ is the image of the curve in $\matP^2$. Denote $\nu\colon C^\nu \to C$ the normalization and $x_i$, $y_i$ the preimages by $\nu$ of the nodes $z_i\in C$, $i=1,\dots,\delta$. Denote $i_C\colon C\hookrightarrow S$ the closed immersion, and $\mathsf{n}\coloneqq i_C\circ\nu\colon C^\nu \to S$ the composition. We know that $i_C$ is $\iota$-equivariant, therefore it suffices to prove 
	$$i_C\circ \tau_C=\tau\circ i_C.$$
	This is the content of the following refinement of (\ref{EqnExt1HomTwistedDual}).
	\begin{lemma}\label{LemmaExt1IsPushforwardHom}
		We have $$\calE xt^1(\mathsf{n}_*\calL,\calO_S(\Gamma-C)) \simeq \mathsf{n}_*\calH om(\calL,\calO_{C^\nu})$$
	\end{lemma}

	\begin{proof}
	Recall that the global sections of $\omega_C$ correspond to the pushforward of $1$-forms on $C^\nu$ with poles over the preimages of the nodes, and opposite residues in the preimages of each node.  Pushforward by $\nu$ gives an exact sequence
	$$0 \to \nu_*\omega_{C^\nu} \to \nu_*\omega_{C^\nu}(\sum_i(x_i+y_i)) \xrightarrow{r}  \bigoplus_i \matC(z_i)^{\oplus 2} \to 0$$
	and $\omega_C$ is given as the preimage by $r$ of the anti-diagonal of $\bigoplus_i\matC(z_i)^{\oplus 2}$. In particular there is a short exact sequence
	\begin{eqnarray}\label{EqnExSeqDualSheafNodalCurve}
	0 \to \nu_*\omega_{C^\nu} \to \omega_C \to \bigoplus_i\matC(z_i) \to 0.
	\end{eqnarray}
	Pulling back by $\nu$, we get the exact sequence
	$$0 \to (\nu^*\nu_*\omega_{C^\nu})_{f} \to \nu^*\omega_C \to \bigoplus_i \matC(x_i)\oplus \matC(y_i) \to 0,$$
	where the first term is the locally free quotient of $\nu^*\nu_*\omega_{C^\nu}$. It is easy to see that $(\nu^*\nu_*\omega_{C^\nu})_{f}\simeq \omega_{C^\nu}$, hence $\nu^*\omega_C\simeq \omega_{C^\nu}\otimes\calO_{C^\nu}(\sum_ix_i+y_i)$.
	
	Using Grothendieck-Verdier duality \cite[Theorem 3.34]{HuybrechtsFMTransform}, we get
	\begin{eqnarray*}
		\mathsf{n}_*\calH om(\calL,\calO_{C^\nu}) &\simeq&  \mathsf{n}_*\calH om(\calL,\omega_{C^\nu}\otimes \mathsf{n}^*\calO_S(\Gamma-C)) \\
		&\simeq & \calE xt^1(\mathsf{n}_*\calL,\calO_S(\Gamma-C)),
	\end{eqnarray*}
	where we use that $\mathsf{n}^*\calO_S(\Gamma)\simeq \calO_{C^\nu}(\sum_i x_i+y_i)$ and $i_C^*\calO_S(C)\simeq \omega_C$.
	\end{proof}

%
	
		\textbf{Third step:} Denote $T=\pi(\Fix^0(N))\subset V$ and $P=\Upsilon(\Prym(C^\nu/D))$. We prove that $\calV\subset T$ is an open subset. Since clearly $\Fix^0(N)_{[C]}=\Prym(C^\nu/D)$ for any $C\in\calV$ (it follows from the definition of $\tau$ and $\Prym(C^\nu/D)$), it implies that $P=\Fix^0(N)_\calV$.
		
		Step $1$ shows that $\calV\subset T$, so we just need to prove $\dim(\calV)=\dim(T)$. Denote $v=\dim(\calV)=(1/2)(k^2-3k)$ and $t=\dim(T)$. The fiber of $\Fix^0(N)$ over a point $[C]\in \calV$ is $\Prym(C^\nu/D)$, which is $m$-dimensional. There is an open $\calW\subset \calV$ around $[C]$ over which $\pi\colon \Fix^0(N)_\calW \to \calW$ is smooth, in particular $\Fix^0(N)_\calW$ (hence $\Fix^0(N)$) has dimension $t+v$. Since $\pi$ is Lagrangian, any $d\pi$-lift of $T_{T,[C]}$ to $T_{\Fix^0(N),[\calO_{C^\nu}]}$ is isotropic (in particular, its dimension is at most $(v+t)/2$) and have dimension $t$ (by smoothness of $\Fix^0(N)_\calW \to \calW)$. Therefore we have $v\leq t\leq (1/2)(v+t)$, so $v=t$.
	
	\textbf{Fourth step:} We know by Step $3$ that $P$ is smooth. Since $\Prym_\calV(\calC^\nu/\calD)$ is smooth over $\calV$ \cite[Thm. 4.33]{LazaSaccaVoisinHKCompactificationIntermediateJacobianFibrationCubic4fold}, by miracle flatness the map $\Upsilon_P\colon \Prym_\calV(\calC^\nu/\calD) \to P$ is an isomorphism.
	
	Finally, the map $\pi_P\colon P \to \calV$ is automatically a Lagrangian fibration because $\pi_N\colon N\to V$ is so and $\calV$ has the same dimension as the fibres of $\pi_P$.
\end{proof}



\begin{remark}\label{RmkSumUpConstructionsFibrations}
	Let us sum-up here the constructions in \S \ref{SectionCompactifiedJacobianLagrangianFibrations}. For $p\colon S\to\matP^2$ a general K3 surface of degree $2$ as in \S \ref{SectionSubfibPrymVar} we have the following commutative diagram:
	
	\begin{center}
	\begin{tikzcd}
		P \ar[d,"\pi_P"] \ar[r,hook] & N \ar[d,"\pi_N"] \ar[r,hook] & M \ar[d,"\pi"] \\
		\calV \ar[r,hook] &  V \ar[r,hook] & {|} kH_S{|}	
	\end{tikzcd}
\end{center}
where the inclusions are locally closed immersions and $\pi_P$ and $\pi_N$ are Lagrangian subfibrations of $\pi$. The subset $V$ consists of nodal curves, while $\calV$ consists of nodal curves with nodes over the ramification locus $\Gamma\subset S$. We have $\dim(|kH_S|)=k^2+1$, $\dim(V)=k^2-3k+1$ and $\dim(\calV)= (1/2)(k^2-3k)$. For $[C]\in \calV$ with $C^\nu\to C$ its normalization and $p(C)=D\subset \matP^2$, the fibres are described as follow:
$$P_{[C]}\simeq \Prym(C^\nu/D) \subset N_{[C]}\simeq \Pic^0(C^\nu) \subset M_{[C]}\simeq \CPic^0(C).$$

\end{remark}

\begin{remark}
	In this lengthy remark, we aim to describe (pointwise) the natural pairing between $T_\calV$ and $\Omega^1_{P/\calV}$ induced by the symplectic form $\sigma_P$.

	Given a point $F\in \CPic^\delta(C)$, the natural pairing between $T_{V,[C]}$ and $T_{\Pic^0(C^\nu),[\calO_{C^\nu}]}\subset T_{\CPic^\delta(C),[F]}$ induced by $\sigma_N$ deforms to a natural pairing between $T_{V,[C]}$ and $T_{\Pic^0(C^\nu),[\calO_{C^\nu}]} \subset T_{\CPic^\delta(C),[\calL]}$ for a line bundle $\calL\in\Pic^\delta(C)\subset\CPic^\delta(C)$ close to $F$. 
	Serre duality for line bundles gives $T^\vee_{\CPic^\delta(C),[\calL]}\simeq H^0(C,\omega_C)$.
	Computing the cohomology sequence induced by (\ref{EqnExSeqDualSheafNodalCurve}), we get an exact sequence
	\begin{eqnarray}\label{EqExSeqGlobalSectionDualSheafNodalCurve}
	0 \to H^0(C^\nu,\omega_{C^\nu}) \to H^0(C,\omega_C) \to \bigoplus_i \matC \to 0.
	\end{eqnarray}
	In particular, the inclusion in (\ref{EqExSeqGlobalSectionDualSheafNodalCurve}) is another interpretation of (\ref{EqnDecompTangentSpaceCompJac}) for $m=0$.
	
	Since $\calL$ is a smooth point of the fibre $\CPic^\delta(C)=M_{[C]}$, contraction with $\sigma_M$ induces a natural isomorphism
	\begin{eqnarray}\label{EqContractionSmoothPointCompJac}
		\calA_\calL\colon H^0(S,\calO_S(k))/\matC s \simeq T_{B,[C]} \xrightarrow{\sim} T^\vee_{\CPic^\delta(C),[\calL]}\simeq H^0(C,\omega_C)
	\end{eqnarray}
	which is given by the composition of $H^0(S,\calO_S(k))\to H^0(C,\calO_C(k))$ and the isomorphism $H^0(C,\calO_C(C))\xrightarrow{\sim} H^0(C,\omega_C)$ \cite[II.1]{BarthPetersVandeVenCompactComplexSurfaces}. In other words, $\calA_\calL$ maps the class of a curve $C'$ to the divisor on $C$ associated to $C\cap C'$.
	
	One can check that $\calA_\calL$ maps isomorphically $T_{V,[C]}$ to $H^0(C^\nu,\omega_{C^\nu})$ (this is essentially \cref{ThmSubfibrationJacobianNormalizations}). Denoting $D=p(C)$, the subspace $T^\vee_{\Prym(C^\nu/D),[\calO_{C^\nu}]}$ identifies with the quotient $H^0(C^\nu,\omega_{C^\nu})/(p\circ\nu)^*H^0(D,\omega_D)$. We claim that $T_{V,[C]}$ decomposes as a direct sum 
	$$T_{V,[C]}=T_{\calV,[C]}\oplus W,$$
	and $\calA_\calL(W)=(p\circ\nu)^*H^0(D,\omega_D)$. Let us explain why this holds.
		
	The linear system $B=|kH_S|=\matP H^0(S,\calO_S(k))$ admits two supplementary subspaces, namely $H^0(S,\calO_S(k))=W_1\oplus W_2$ with $W_1=p^*H^0(\matP^2,\calO_{\matP^2}(k))$ and $W_2=\gamma\otimes H^0(\matP^2,\calO_{\matP^2}(k-3))$, where $\gamma$ is a global section of $H^0(S,\calO_S(3))$ whose zero locus is $\Gamma$. The spaces $W_1$ and $W_2$ are the eigenspaces of the natural action of $\iota$ on $H^0(S,\calO_S(k))$. By construction, $\calV \subset W_1$, and since $\calA_\calL$ maps $W_2$ to $H^0(C^\nu,\omega_{C^\nu})$, we get $W_2\subset T_{V,[C]}$. A dimension count give $T_{V,[C]}\simeq T_{\calV,[C]}\oplus W_2.$
	By definition, $\calA_\calL(W_2)=(p\circ\nu)^*H^0(D,\omega_D)$. Taking quotients, we see that $\calA_\calL$ induces an isomorphism
	$$T_{\calV,[C]} \xrightarrow{\sim} T^\vee_{\Prym(C^\nu/D),[\calO_{C^\nu}]}.$$

\end{remark}

\section{Cubic fourfold containing a plane: geometric setup}\label{SectionGeomSettings}

The goal of this section is to introduce the geometric setup for the statement and proof of Theorem \ref{ThmMainPrymSubFibration}.
For a proof of all the claims, we refer to \cite[\S $5.1.3$ and \S $6.1.2$]{HuybrechtsCubicHypersurfaces}. Let $X$ be a complex cubic fourfold in $\matP^5$ containing a plane $P\subset X$, and assume $(X,P)$ is generic (see \S \ref{SectionGeneralityAssumptions}). Consider the projection from $P$
$$\pi_X\colon\Bl_P(X)\to \matP^2$$
to a generic $\matP^2\subset \matP^5$. The fibre over $y\in\matP^2$ is the residual quadric $Q_y\subset X$ of the intersection $\overline{yP}\cap X=Q_y\cup P$.
The discriminant curve $D_6\subset\matP^2$ of singular quadrics is a sextic plane curve. For $(X,P)$ general, $D_6$ is smooth, and the only singular quadrics are cones over a conic. Denoting $\tau_X\colon\Bl_P(X) \to X$ the blow-up map and $E$ the exceptional divisor, the morphism $\pi_X$ is associated to the linear system $|\tau_X^*\calO_X(1)\otimes \calO_{\Bl_P(X)}(-E)|$.


Attached to the family of quadric surfaces, there is a relative Fano variety of lines
$\widetilde{F} \to \matP^2$
which splits (from its Stein factorization)
$$\widetilde{F} \xrightarrow{\alpha} S \xrightarrow{\pi} \matP^2,$$
where $\pi\colon S \to \matP^2$ is a double cover ($K3$ surface of degree $2$) branched over $D_6$, and $\alpha\colon \widetilde{F}\to S$ is a morphism with $\matP^1$ fibres (a \textit{Brauer-Severi} variety). We denote $H_{\matP^2}$ the divisor class of a line in $\matP^2$ and $H_S\coloneqq \pi^{-1}H_{\matP^2}$ the primitive polarization on $S$ of square $2$. 
\newline

Consider the open subset $\calU\subset (\matP^5)^\vee$ of hyperplanes as in \S \ref{SectionGeneralityAssumptions}. Pick $H\in\calU$ and denote $Y=X\cap H$ (a smooth cubic threefold) and $L=H\cap P\subset Y$. The projection $\pi_X$ restricts to a map
$$\pi_Y \colon  \Bl_L(Y) \to \matP^2,$$
where we identify the strict transform $\widetilde{Y}\subset \Bl_P(X)$ with $\Bl_L(Y)$. Denote $E'$ the exceptional divisor of the blow-up map $\tau_Y\colon \Bl_L(Y)\to Y$. Since the restriction of $\calO_{\Bl_P(X)}(-E)$ to $\Bl_L(Y)$ is $\calO_{\Bl_L(Y)}(-E')$, the morphism $\pi_Y$ is associated to the linear system $|\tau_Y^*\calO_Y(1)\otimes \calO_{\Bl_L(Y)}(-E')|$. The fibres of $\pi_Y$ are the intersection of the fibres of $\pi_X$ with $H$. The general fibres are smooth conics, the discriminant curve $D_H\subset\matP^2$ is a smooth plane quintic and the fibres over it are the reduced union of two secant lines.

Consider the curve $C_H=\overline{\{L'\in F(Y)\mysetminus L \ | \ L\cap L'\neq \emptyset\}}$. It is smooth of genus $11$. Lines parametrized by $C_H$ are contained in the fibres of $\pi_Y$ (in particular, in the fibres over $D_H$), $C_H\to D_H$ is an étale double cover, and the intermediate Jacobian of $Y$ can be described as
$$J(Y)\simeq \Prym(C_H/D_H).$$

Following \cite{LazaSaccaVoisinHKCompactificationIntermediateJacobianFibrationCubic4fold}, there exists a universal family of hyperplane sections $\overline{\calY} \to (\matP^5)^\vee$ and we can consider its restriction $\calY \to \calU$ to the open subset $\calU\subset (\matP^5)^\vee$.
Working in relative settings, we construct in \S \ref{SectionConstructionOfRelativeCurve} a family of étale double covers $\calC \to \calD$ over $\calU$, whose fibre over $H\in \calU$ is the double cover $C_H \to D_H$. By \cite[\S 5]{LazaSaccaVoisinHKCompactificationIntermediateJacobianFibrationCubic4fold}, the intermediate Jacobian fibration $\calJ \to \calU$ can be identified with the relative $\Prym$
$$\calJ \simeq \Prym_\calU(\calC/\calD) \to \calU.$$

\noindent and it can be equipped with a holomorphic symplectic structure.

\begin{remark}
	Note that the authors in \cite{LazaSaccaVoisinHKCompactificationIntermediateJacobianFibrationCubic4fold} assume $X$ to be general in order to construct a smooth compactification of $\Prym_\calU(\calC/\calD)$. In particular, their construction does not work for cubics containing a plane, however the relative Prym construction and its isomorphism with the Jacobian fibration still holds over the open $\calU$ that we consider, thanks to the existence of the relative curve $\calC$. Sacc\`a \cite{SaccaBiratGeomIntJacFibCubic4f} constructed a smooth compactification for \textit{all} cubic fourfolds. See \S \ref{SectionQuestionMainThm} for more remarks.
	\end{remark}



As an application of the construction developped in \S \ref{SectionSubfibPrymVar}, the following theorem describes the relative Prym fibration as a subfibration (up to a cover) of a Beauville--Mukai system on the $K3$ surface $S$ associated to $X$.

\begin{theorem}\label{ThmMainPrymSubFibration}
There is a commutative diagram 
\begin{center}
\begin{tikzcd}
	\Prym_\calU(\calC/\calD) \ar[r,"f"] \ar[d] & M\coloneqq M_S(0,5H_S,-10) \ar[d] \\
	\calU \ar[r,"i"] & {|}5H_S{|}
\end{tikzcd}
\end{center}
such that
\begin{enumerate}[label=(\roman*)]
	\item the vertical arrows are Lagrangian fibrations,
	\item the horizontal arrows are quasi-finite,
	\item the image $f(\Prym_\calU(\calC/\calD))$ is symplectic with respect to the restriction of the symplectic structure of $M$, so that the maps $f$ and $i$ realize $\Prym_\calU(\calC/\calD)$ as a finite cover of a Lagrangian subfibration of $M\to |5H_S|$.
\end{enumerate}

\end{theorem}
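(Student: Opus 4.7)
The plan is to construct $i$ and $f$ explicitly from the conic fibration geometry, then invoke \cref{ThmPrymSubfibrationOverNodalCurves} specialised to $k=5$ to equip the image of $f$ with the required Lagrangian and symplectic structure.

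First I would define $i$ as follows. For $H\in\calU$, the discriminant $D_H\subset\matP^2$ of the conic fibration $\pi_Y\colon\Bl_L(Y)\to\matP^2$ is a smooth plane quintic, so setting $i(H)\coloneqq p^*D_H$ yields a morphism $\calU\to|5H_{\matP^2}|\hookrightarrow|5H_S|$. To apply the machinery of \S\ref{SectionSubfibPrymVar}, I need to verify that $i$ factors through the locally closed locus $\calV\subset|5H_{\matP^2}|$ of smooth quintics meeting $D_6$ in $\delta=15$ points of multiplicity $2$; this is a genericity statement on $(X,P)$ that follows from \cref{LemmaLinearSystemD515TangD6isdim5}, which also gives the matching dimension count $\dim\calV=\frac{1}{2}(k^2-3k)=5=\dim\calU$. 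By \cref{LemmaMult2IntImpliesNodalOnK3}, $p^*D_H$ is an integral $15$-nodal curve on $S$, with nodes on the ramification locus $\Gamma$. The key geometric input is then to identify the normalisation $(p^*D_H)^\nu\to D_H$ with the Fano curve $C_H\to D_H$ of lines in $Y$ meeting $L$, as \'etale double covers of $D_H$. This is where the Brauer--Severi picture enters: the relative variety of lines $\widetilde{F}\to\matP^2$ of the conic fibration on $X$ has Stein factorisation $\widetilde{F}\to S\to\matP^2$, and restricting both constructions over $D_H$ identifies the two covers. Spelled out in families over $\calU$, this produces a canonical isomorphism $\calC\cong(p^*\calD)^\nu$ compatible with the projection to $\calD$.

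With this identification, I define $f$ fibrewise by the pushforward along $\mathsf{n}_H\colon C_H\cong(p^*D_H)^\nu\to p^*D_H\hookrightarrow S$, which sends a degree $0$ line bundle on $C_H$ to a pure torsion sheaf on $S$ of Mukai vector $(0,5H_S,-10)$ (the Euler characteristic comes from $\chi=0-(p_a-\delta)+1=-10$). Restricting to the Prym and working in families yields $f\colon\Prym_\calU(\calC/\calD)\to M$ commuting with $i$. Property (iii) and the Lagrangian claim then follow from \cref{ThmPrymSubfibrationOverNodalCurves} applied with $k=5$: fibrewise, $f$ factors as the embedding $\Prym(C^\nu/D)\hookrightarrow P\subset M$ produced by the theorem, so $f$ decomposes as $\Prym_\calU(\calC/\calD)\to\Prym_{i(\calU)}(\calC^\nu/\calD)=P\cap\pi^{-1}(i(\calU))\hookrightarrow M$, where by the theorem the middle term is smooth, symplectic, and Lagrangian over the open subset $i(\calU)\subset\calV$. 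For quasi-finiteness, the equality $\dim\calU=\dim\calV=5$ combined with a generic injectivity argument for $i$ (recovering $H$ up to finite ambiguity from the data of $D_H$) yields that $i$, and hence $f$, is quasi-finite.

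The main obstacle is the geometric identification of $(p^*D_H)^\nu$ with $C_H$ as \'etale double covers of $D_H$, since this is precisely what bridges the cubic-fourfold side of the picture (intermediate Jacobian $=$ Prym) with the K3 side (Beauville--Mukai system) and is needed for the diagram to commute. Subsidiary technicalities are the verification that $i(\calU)\subset\calV$ and the quasi-finiteness of $i$; both are genericity or dimension checks once the factorisation $\widetilde{F}\to S\to\matP^2$ has been exploited.
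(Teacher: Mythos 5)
Your proposal follows the same route as the paper: define $i$ by sending $H$ to the pulled-back discriminant quintic $p^*D_H$, identify the Fano double cover $C_H\to D_H$ with the normalization of the $15$-nodal curve $p^{-1}(D_H)\subset S$ via the Stein factorisation $\widetilde{F}\to S\to\matP^2$, define $f$ by pushforward of line bundles, and then invoke \cref{ThmPrymSubfibrationOverNodalCurves} with $k=5$. You have correctly isolated the identification $C_H\cong (p^*D_H)^\nu$ as the bridge between the two sides, and your Mukai vector computation is right. However, two steps that the paper treats as genuine geometric propositions are dismissed in your write-up as ``genericity or dimension checks'', and as stated neither justification works.

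First, the inclusion $i(\calU)\subset\calV$ requires that $D_H$ meets $D_6$ \emph{everywhere with multiplicity exactly $2$}. The lower bound (everywhere tangency) is not a genericity statement and does not follow from \cref{LemmaLinearSystemD515TangD6isdim5}, which only computes $\dim|Z|=5$ \emph{assuming} such a half-divisor $Z$ with $2Z\sim 5h$ exists; a truly generic quintic meets $D_6$ transversally in $30$ points, which would make $p^{-1}(D_H)$ smooth of genus $26$ and destroy the whole construction. The tangency holds for \emph{every} $H$ because at a point of $D_H\cap D_6$ the quadric $Q_y$ degenerates further, and the paper proves it (\cref{LemmaIntersectionD6DHmult2}) by a computation with minors of the Jacobian matrices of $Q_y$ and $Q_y\cap H$; only the upper bound on the multiplicity is imposed as a genericity assumption on $(X,P,H)$. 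Your sketch is missing this argument. Second, quasi-finiteness of $i$ does not follow from $\dim\calU=\dim\calV=5$ plus ``a generic injectivity argument'': equality of dimensions gives at best generic finiteness of a \emph{dominant} map, and dominance itself is only known once one controls the fibres; moreover quasi-finiteness is a statement about every fibre. The paper's \cref{PropUto5HP2isfinite} supplies the actual content: $D_{H_1}=D_{H_2}$ forces $C_{H_1}\cong C_{H_2}$ as double covers, hence $\Prym(C_{H_1}/D_H)\cong\Prym(C_{H_2}/D_H)$, hence $X\cap H_1\cong X\cap H_2$ by the Torelli theorem for cubic threefolds, and maximal variation of hyperplane sections then bounds the fibre. ``Recovering $H$ up to finite ambiguity from $D_H$'' is precisely the claim to be proved, and it is not a dimension check. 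Apart from these two points the architecture of your argument coincides with the paper's.
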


We prove \cref{ThmMainPrymSubFibration} at the end of \S \ref{sectionConstructionf}.

\subsection{In coordinates}\label{SectionCoordinates}

A description of the equations of $D_H$ and $D_6$ with respect to the ones of $X$ can be found in \cite[\S 5]{AuelCollThParimUnivUnramCohCubic4fPlane}. We set $X\subset \matP^5=\matP V_6$, with homogeneous coordinates $(x:y:z:t_0:t_1:t_2)$, and $P=\matP V_3\subset X$. We can assume that $P=\{x=y=z=0\}$ and therefore the equation of $X$ has the shape
\begin{equation}\label{EqCubicContainingPlane}
\sum_{0\leq i<j\leq 2}a_{ij}t_it_j+\sum_{0\leq i \leq 2} b_it_i^2+c=0
\end{equation}
where $a_{ij}$, $b_i$ and $c$ are homogeneous polynomials in $x,y,z$ of degree $1$, $2$ and $3$ respectively.
The projection $\pi_X\colon\Bl_P(\matP^5)\to \matP^2\coloneqq \matP(V_6/V_3)$ is identified with the projective bundle $\matP\calE \to \matP^2$, where $\calE=(V_3\otimes\calO_{\matP^2})\oplus \calO_{\matP^2}(-1)$. The quadric bundle $\Bl_P(X)\to\matP^2$ is then defined by the quadratic form $q_X\colon\calE \to \calO_{\matP^2}(1)$ given by
$$q(t_0,t_1,t_2,s)=\sum_{i,j}a_{ij}t_it_j+\sum_i b_it_i^2s+cs^2,$$
where $s$ is a local equation for $\calO_{\matP^2}(-1)$. See \cite{AuelBernardaraBolognesiFibQuadricsCliffalg} for a general treatment of quadratic forms and quadric bundles. The Gram matrix associated to (the bilinear form associated to) $q$ is given by
\begin{equation}\label{EqMatrixCubics}
M_q\coloneqq\begin{pmatrix}
	2a_{00} & a_{01} & a_{02} & b_0 \\
	a_{01} & 2a_{11} & a_{12} & b_1 \\
	a_{02} & a_{12} & 2a_{22} & b_2 \\
	b_0 & b_1 & b_2 & 2c 
\end{pmatrix}
\end{equation}
and the discriminant curve $D_6$ is given by the determinant of this matrix. Now pick $H\in\calU$, set $L=P\cap H=\matP V_2$ with $V_2\subset V_3$ and denote $\calE_H=(V_2\otimes\calO_{\matP^2})\oplus \calO_{\matP^2}(-1)$. We can assume that the equation of $H$ gives a linear relation $t_0=l(t_1,t_2)$, and the conic bundle $\Bl_L(Y)\to\matP^2$ is given by the quadratic form $q_H\colon\calE_H\to\calO_{\matP^2}(1)$ defined by $q_H=q(l(t_1,t_2),t_1,t_2,s)$. Again, the discriminant curve $D_H$ is defined by the determinant of the Gram matrix of $q_H$. For instance, when $H=\{t_0=0\}$, the determinant of $q_H$ is the first minor $M_{11}$ of $M_q$.

\subsection{Generality assumptions}\label{SectionGeneralityAssumptions}

In view of the rest of the paper, we want to pick a general cubic $X$ containing a plane $P$ and an open $\calU\subset (\matP^5)^\vee$ of hyperplanes $H$ for which

\begin{enumerate}[label=(\roman*)]
	\item $X$ and $D_6$ are smooth, \label{Cond1}
	\item $D_6$ does not admit a tritangent (see \cref{LemmaLinearSystemD515TangD6isdim5}),\label{Cond2}
	\item $Y$, $C_H$ and $D_H$ are smooth, \label{Cond3}
	\item $D_H$ and $D_6$ intersect with multiplicity at most $2$ (see \cref{LemmaIntersectionD6DHmult2}). \label{Cond4}
\end{enumerate}

Consider the subspace $\calS\subset |\calO_{\matP^5}(3)|\times \matG(2,\matP^5)\times (\matP^5)^\vee$ of triples $(X,P,H)$ with $P\subset X$.
We will prove that the conditions \ref{Cond3} and \ref{Cond4} are open in $\calS$ and we will find explicit examples $(X,P,H)$ satisfying each of them. An important remark (which simplifies the computations) is that we do not require this example to satisfy all the conditions at the same time. In particular, we do not need $X$ or $Y$ to be smooth in order to study $D_H$ and $D_6$, that we define as in \S \ref{SectionCoordinates}.

\begin{enumerate}[wide, labelwidth=!, labelindent=\labelindent, label=(\roman*)]
	
	\item[\textit{Conditions} \ref{Cond1} \textit{and} \ref{Cond2}:] For general $(X,P)$ the curve $D_6$ is smooth \cite[Remark 1.5]{HuybrechtsCubicHypersurfaces}. Moreover, the number of pairs $(X',P')$ with associated sextic $D_6$ is finite. Indeed, in this case the associated K3 of $X'$ is also $S$ (the double cover of $\matP^2$ ramified over $D_6$), and the Kuznetsov component $\Ku(X')$ of $\D^b(X')$ satisfies $\Ku(X')\simeq \D^b(S,\alpha')$ for some $\alpha'\in\Br(S)[2]$ \cite[Prop. 4.3]{KuznetsovDerivedCatCubicFourfolds}. But $\Br(S)[2]$ is finite, and there exists finitely many $X'$ with $\Ku(X)\simeq \Ku(X')$ \cite[Thm. 1.1]{HuybrechtsK3CatCubicFourfold}. Since the moduli space of smooth cubic fourfolds containing a plane has the same dimension as the moduli space of smooth plane sextics ($=19$), for general $(X,P)$ we can assume that $D_6$ is general, in particular it admits no tritangent.
	\newline

	\item[\textit{Condition} \ref{Cond3}:] A general hyperplane section of a smooth cubic fourfold is smooth. The equation of $D_H$ depends polynomially on the equation of $X$ and $H$ (see \S \ref{SectionCoordinates}) so its smoothness is an open condition in $\calS$. See \cref{ExXPHwithgoodproperties}, \cref{ExSmoothDH} for an example of smooth $D_H$. 
	
	  Recall that if $T$ is a cubic hypersurface given in $\matP^n$ by the equation $F$, then a line $L\subset T$ is of the first type in $T$ if the restriction $\partial_iF|_L$ of the partial derivatives of $F$ to $L$ generates $\calO_L(2)$. When $D_H$ is smooth, $C_H$ is smooth whenever $L$ is of the first type in $Y$ \cite[Cor. 1.23]{HuybrechtsCubicHypersurfaces}. Note that $L\subset P$ is of the first type in $Y$ if and only if it is so in $X$. Indeed, we can assume that $X$, $P$ and $H$ are given as in \S \ref{SectionCoordinates}. Therefore, the equation $F=F(x,y,z,t_0,t_1,t_2)$ of $X$ is given by (\ref{EqCubicContainingPlane}), the equation of $H$ is given by $t_0=0$, the equation of $Y$ is given by $F_H=F(x,y,z,0,t_1,t_2)$ and $\calO_L(2)$ is spanned by $\{t_1^2,t_2^2,t_1t_2\}$. Therefore the difference $F-F_Y$ is divisible by $t_0$, and hence no monomial in $F-F_Y$ is divisible by $t_1^2$, $t_2^2$ nor $t_1t_2$. We obtain that the partial derivatives of $F$ and $F_Y$ generates the same sub-vector space of $\calO_L(2)$, as claimed. 
	 
	 From this remark we also see that $L$ being of the first type in $X$ is an open condition in $\calS$ (in fact, the relative locus $F_2(\calX)\subset |\calO_{\matP^5}(3)|\times \matG(1,\matP^5)$ of lines of the second type is a proper closed subscheme). See \cref{ExXPHwithgoodproperties}, \cref{ExLFirstType} of an example with $L$ of the first type. 

  \begin{remark}
      Geometrically, a line $L\subset Y\subset \matP^4$ is of the second type if and only if there exists a plane $L\subset P_L\subset \matP^4$ for which $P_L\cap Y=2L+L'$ for some residual line $L'$. Avoiding this phenomenon is an important step in the construction of a compactification of $\calJ$, see \cite[\S 2]{LazaSaccaVoisinHKCompactificationIntermediateJacobianFibrationCubic4fold}.
  \end{remark}
	 
	 \item[\textit{Condition} \ref{Cond4}:] 
	 It is enough to prove that the locus in $|\calO_{\matP^2}(5)|\times |\calO_{\matP^2}(6)|$ of curves which intersect with at least $1$ point of multiplicity $3$ is closed. Denote $\calC_i$ the universal curve over $|\calO_{\matP^2}(i)|$,  $i=5,6$. Consider the fibre product
	 $$\begin{tikzcd}
	 \calC_{5,6} \ar[r] \arrow[dr, phantom, "\lrcorner", very near start] \ar[d] & \calC_5 \ar[d] \\
	 \calC_6 \ar[r] & \matP^2.
	 \end{tikzcd}$$
	 The fibre of the natural map $\calC_{5,6}\to |\calO_{\matP^2}(5)|\times |\calO_{\matP^2}(6)|$ over some pair $(C_5,C_6)$ is the intersection $C_5\cap C_6$, seen as a subscheme of $\matP^2$. Consider the open $\calU_{5,6}\subset |\calO_{\matP^2}(5)\times \calO_{\matP^2}(6)|$ of curves $(C_5,C_6)$ such that $C_5\cap C_6$ has dimension $0$ (it is open by \cite[Cor. 14.113]{GortzWedhornAlgGeoSchemes}), and denote again $\calC_{5,6}$ the restriction to this open. Hence $\calC_{5,6}\to \calU_{5,6}$ is a family of $0$-dimensional subschemes of $\matP^2$ of length $30$. The universal property of Hilbert schemes gives a map
	 $$\calU_{5,6} \to (\matP^2)^{[30]}$$
	 and the locus of curves with at least $1$ intersection point of multiplicity $3$ is the preimage of the closed subscheme of $(\matP^2)^{[30]}$ consisting of subschemes with at least $1$ triple point. The latter is indeed closed since it is itself the preimage by the Hilbert-Chow morphism of the locus in $(\matP^2)^{(30)}$ of $30$-uples with a point appearing at least $3$ times.
	 
	 To conclude, it is enough to find one example where $D_6$ and $D_H$ intersects with multiplicity at most $2$ everywhere, see \cref{ExXPHwithgoodproperties}, \cref{ExDHD6IntMult2}.
	 
\end{enumerate}
	
	\begin{example}\label{ExXPHwithgoodproperties}
		We use the notation of \S \ref{SectionCoordinates}.
		\begin{enumerate}[wide, labelwidth=!, labelindent=\labelindent]
			\item Consider the cubic fourfold with associated matrix (\ref{EqMatrixCubics}) given by 
			$$\begin{pmatrix}
			2a_{00} & a_{01} & a_{02} & b_0 \\
			a_{01} & x+y+z & x & 0 \\
			a_{02} & x & x & z^2 \\
			b_0 & 0 & z^2 & x^3+y^3+z^3 
			\end{pmatrix} $$
			for any $a_{00},a_{01},a_{02},b_0$, and set $H=\{t_0=0\}$. Therefore $D_H$ is given by the first minor $M_{11}$ of this matrix. One can check that $D_H$ is smooth using the Jacobian criterion.
			 \label{ExSmoothDH}
			\item Consider the cubic fourfold given by the equation $xt_1^2+yt_2^2+zt_1t_2$ and set $H=\{t_0=0\}$. Clearly the partial derivatives along $x,y,z$ generate $H^0(L,\calO_L(2))=\Span(t_1^2,t_2^2,t_0t_1)$.
			\label{ExLFirstType}
			\item Consider the cubic fourfold with associated matrix
			$$\begin{pmatrix}
			0 & z-2y & z-2x & (x-y)(x-2y) \\
			z-2y & x & 0 & 0 \\
			z-2x & 0 & y & 0 \\
			(x-y)(x-2y) & 0 & 0 & x^3+y^3+z^3,
			\end{pmatrix} $$
			and set $H=\{t_0=0\}$. Denoting $F$ the Fermat cubic $x^3+y^3+z^3$, the curve $D_H$ is given by the equation $xyF$ (i.e. $D_H$ is the union of the Fermat cubic and two lines), and $D_6$ is given by the equation
			$$(z-2y)^2yF+(z-2x)^2xF+(x-y)^2(x-2y)^2xy.$$
			It is easy to check that $D_6$ and $D_H$ have no common component. For any $q\in \matP^2$, the multiplicity of the intersection is computed as follows:
			\begin{eqnarray}
			m_q(D_H,D_6) &=& m_q(xyF,(z-2y)^2yF+(z-2x)^2xF+(x-y)^2(x-2y)^2xy) \nonumber \\
			&=& 2m_q(x,z-2y)+2m_q(y,z-2x)+2m_q(F,x-y)+2m_q(F,x-2y) \label{EqMultComputations} \\
			& &+2m_q(x,y)+2m_q(F,x)+2m_q(F,y) \nonumber
			\end{eqnarray}
			We used here that for any homogeneous polynomials $G,K,M$, intersection multiplicity satisfies $m_q(G,KG+M)=m_q(G,M)$ and $m_q(G,KM)=m_q(G,K)+m_q(G,M)$. One can easily check that no two terms in (\ref{EqMultComputations}) are non-zero at the same time (that is, for some $q\in D_6\cap D_H$), so we conclude that $D_6$ and $D_H$ intersect only in multiplicity exactly $2$.

			 \label{ExDHD6IntMult2}
		\end{enumerate}
	\end{example}

\section{Proof of \cref{ThmMainPrymSubFibration}}\label{sectionConstructionf}

In this section, we freely use the notation introduced in \S \ref{SectionGeomSettings}.

\subsection{Construction of $\calC$ and $i$}\label{SectionConstructionOfRelativeCurve}

There is a divisor $F_P\subset F(X)$ defined as the closure
$$F_P\coloneqq \overline{\{[L] \in F(X) \mysetminus P^\vee \ | \ L\cap P\neq \emptyset \}},$$
where $P^\vee\subset F(X)$ is the dual plane of lines contained in $P\subset X$. There is a natural map $\widetilde{F}\to F_P$, which is bijective over $F_P\mysetminus P^\vee$ (for general $(X,P)$, it is an isomorphism). Note that for a hyperplane $H\in\calU$ and $Y=X\cap H$, the curve $C_H$ is given by the base change
$$\begin{tikzcd}
C_H \ar[d,hook] \ar[r] & F_P \ar[d,hook] \\
F(Y) \ar[r,hook] & F(X),
\end{tikzcd}$$
hence for $H\in\calU$ we can consider $C_H$ as a curve in $\widetilde{F}$. Indeed, if $L\in P^\vee\cap F_P$ then $L\in C_H$, in particular $L$ is of the second type in $Y=X\cap H$, which does not happen for $H\in \calU$ (see \S \ref{SectionGeneralityAssumptions}).

Working in a family, one can consider the fibre diagram
$$\begin{tikzcd}
\calC \ar[d,hook] \ar[r] & F_P \ar[d,hook] \\
F_\calU(\calY) \ar[r] & F(X),
\end{tikzcd}$$
so that $\calC\to \calU$ is a family of curves, whose fibre over $H\in\calU$ is $C_H$. Since each fibre lies in $\widetilde{F}$, the family $\calC$ naturally lives in $\widetilde{F}\times \calU$. The image of $\calC$ by 
$$\widetilde{F} \times \calU \xrightarrow{\pi\circ \alpha \times \Id_\calU}\matP^2\times \calU$$
is a family of curves $\calD\to \calU$ whose fibre over $H\in\calU$ is $D_H$. 
The family $\calD$ induces a map $\calU\to |5H_{\matP^2}|$.
The map $\calC \to \calD$ is a family of étale double covers, from which we can construct the relative Prym variety $\Prym_\calU(\calC/\calD)$, given fibrewise by $\Prym(C_H/D_H)$ \cite[\S 4.1]{LazaSaccaVoisinHKCompactificationIntermediateJacobianFibrationCubic4fold}. 
\newline


\begin{proposition}\label{PropUto5HP2isfinite}
	The map $\calU \to |5H_{\matP^2}|$ is quasi-finite.
\end{proposition}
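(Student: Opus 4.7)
The plan is to show that $\phi\colon \calU \to |5H_{\matP^2}|$, $H \mapsto D_H$, has finite fibres by reconstructing $H$ from $D_H$ up to finitely many choices. The starting point is that the K3 double cover $p\colon S \to \matP^2$ branched over $D_6$ is determined by the fixed data $(X,P)$. The generality assumptions of \S\ref{SectionGeneralityAssumptions} ensure that $D_H$ meets $D_6$ in $15$ points of intersection multiplicity $2$, so by \cref{LemmaMult2IntImpliesNodalOnK3} the curve $p^{-1}(D_H) \subset S$ is integral $15$-nodal, and its normalization provides a canonical étale double cover of $D_H$.

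First I would verify, by unwinding the construction of $\calC$ in \S\ref{SectionConstructionOfRelativeCurve}, that this K3-constructed cover is canonically isomorphic over $D_H$ to the Prym curve $C_H \to D_H$. Indeed, for $y \in D_H \setminus D_6$ the degenerate fibre $Q_y \cap H$ of $\pi_Y$ consists of two lines lying in the two distinct rulings of the smooth quadric $Q_y$, which correspond to the two points of $p^{-1}(y)$ in $S$, whereas for $y \in D_H \cap D_6$ both lines lie in the unique ruling of the cone $Q_y$, corresponding to the single ramification point of $p$ over $y$. Hence the natural map $C_H \to S$ factors birationally through $p^{-1}(D_H)$, identifying $C_H$ with its normalization. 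Consequently $D_H$ determines the double cover $C_H \to D_H$, and therefore the principally polarized abelian variety $\Prym(C_H/D_H) = J(Y_H)$. By the Clemens--Griffiths Torelli theorem for smooth cubic threefolds, the ppav $J(Y_H)$ determines the isomorphism class of $Y_H$, so $\phi^{-1}(D_H)$ is contained in the fibre over $[Y_H]$ of the moduli map $\Psi\colon \calU \to \calM_3^{\mathrm{cubic}}$.

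The main obstacle is the last step: showing that $\Psi$ is itself quasi-finite, i.e.\ that a fixed smooth cubic threefold arises as $X \cap H$ for only finitely many $H \in \calU$. I would argue this using the generality of $X$. Any abstract isomorphism $Y_{H_1} \cong Y_{H_2}$ is linear via the canonical embedding and extends to some $\widetilde{f} \in \mathrm{PGL}(6)$ taking $H_1$ to $H_2$; if such an $\widetilde{f}$ preserves $X$ then $\widetilde{f} \in \Aut(X)$, a finite group for $X$ general. Exotic isomorphisms not lifting to $\Aut(X)$ should be ruled out by an infinitesimal argument: the differential at every $H \in \calU$ of the composition $\calU \to \calA_5^{\mathrm{ppav}}$ given by the Clemens--Griffiths period map applied to the family $\calY/\calU$ is expected to be injective for $X$ general, whence $\Psi$ has zero-dimensional fibres and so does $\phi$.
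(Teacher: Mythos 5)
Your reduction to the Torelli step is essentially correct and runs parallel to the paper's own argument. The paper also shows that $D_{H_1}=D_{H_2}$ forces an isomorphism $C_{H_1}\simeq C_{H_2}$ compatible with the covering involutions — it does so by matching the two rulings of the smooth quadrics $Q_y$ over $y\in D_H\mysetminus D_6$ and extending by smoothness of the curves, whereas you realize both $C_{H_i}$ as the normalization of the single nodal curve $p^{-1}(D_H)\subset S$; the paper records exactly this identification afterwards in \cref{RmkFamCisFibreProdCnu}, so your route is legitimate. (One small citation slip: that $D_H$ meets $D_6$ with multiplicity \emph{exactly} $2$ at every intersection point is not purely a genericity assumption; the lower bound is the content of \cref{LemmaIntersectionD6DHmult2}, which is proved by a Jacobian-minor computation and is logically needed before one can apply \cref{LemmaMult2IntImpliesNodalOnK3}.) From there, $\Prym(C_{H_1}/D_H)\simeq\Prym(C_{H_2}/D_H)$ as principally polarized abelian varieties and Clemens--Griffiths give $X\cap H_1\simeq X\cap H_2$, exactly as in the paper.

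The genuine gap is the final step, which you flag yourself as "the main obstacle" and then do not close. You need: a fixed smooth cubic threefold arises as $X\cap H$ for only finitely many $H\in\calU$. Your plan (a linear isomorphism $Y_{H_1}\simeq Y_{H_2}$ extends to $\PGL(6)$, and if the extension preserves $X$ one concludes by finiteness of $\Aut(X)$) does not suffice, because — as you acknowledge — there is no reason any extension of the isomorphism to $\matP^5$ should preserve $X$, so one cannot reduce to $\Aut(X)$. The remaining case is dismissed with "should be ruled out" and "is expected to be injective", which is an assertion, not an argument; moreover injectivity of the differential of the period/moduli map at \emph{every} point of $\calU$ is stronger than what is actually available. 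The paper handles precisely this step by invoking a known theorem — maximal variation for hyperplane sections of cubic fourfolds containing a plane \cite[Prop.~5.18]{HuybrechtsCubicHypersurfaces} — and even then only \emph{after shrinking} $\calU$, reflecting that one gets generic finiteness of $H\mapsto[X\cap H]$ rather than pointwise immersivity. Until you either prove your infinitesimal claim or cite such a maximal-variation statement, the proof of quasi-finiteness is incomplete at exactly this point.
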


\begin{proof}
	Pick two hyperplanes $H_1$ and $H_2$ and assume $D_H\coloneqq D_{H_1}=D_{H_2}$. 
	Then there exists a point $y\in D_{H}\mysetminus D_6$ such that the fibre of $\pi_X$ over $y$ is a smooth conic $Q_y$, and $Q_y\cap H_i=L_i\cup L_i'$ with $L_i\cap L_i'\neq \emptyset$, for $i=1,2$. Up to reordering, since these four lines lie in $Q_y$, we can assume that $L_1\cap L_2=\{x\}$ and $L_1'\cap L_2'=\{x'\}$. We obtain a birational map
	$$C_{H_1}\dashrightarrow C_{H_2}$$
	over $D_H \mysetminus D_6$, defined over $y$ by sending $L_1$ to $L_2$ and $L_1'$ to $L_2'$, compatible with the covering involutions on each curve. Since both curves are smooth, we obtain an isomorphism $C_{H_1}\simeq C_{H_2}$ and therefore $\Prym(C_{H_1}/D_H)\simeq \Prym(C_{H_2}/D_H)$. By the Torelli theorem for cubic threefolds, we get $X\cap H_1\simeq X\cap H_2$. Up to shrinking $\calU$, for fixed $H_1$, there exist only a finite number of such $H_2$ by maximal variation for hyperplane sections of cubic fourfolds containing a plane \cite[Prop. 5.18]{HuybrechtsCubicHypersurfaces}.
\end{proof}

The map $\calC \to \calD$ factors through $\widetilde{F} \times \calU \to S \times \calU$. The image of $\calC$ by the latter map is a family of curves in $S$ lying in the linear system $\pi^*|5H_{\matP^2}|\subset |5H_S|$, therefore it yields a map
$$i\colon \calU \to |5H_S|$$
which is quasi-finite as a consequence of \cref{PropUto5HP2isfinite}. Set $\calV\coloneqq i(\calU)\subset |5H_S|$, and we denote $\overline{C}_H\coloneqq \alpha(C_H)$.

\subsection{Description of $\calV$}

Fix a hyperplane $H$, and set as before $L\coloneqq P\cap H$. Suppose that two lines $L_1,L_2\in C_H$ have the same image $y\in\matP^2$ by $\widetilde{F}\to\matP^2$. By construction, $y\in D_H$. For $i=1,2$, $L_i\in Q_y\cap H$. If $Q_y$ is smooth (i.e. $y\notin D_6$), then $L_1$ and $L_2$ are in the two different components of $F(Q_y)$, so in particular $\alpha(L_1)\neq \alpha(L_2)$ in $S$. If $Q_y$ is not smooth, then it is a cone over a conic and $L_1$ and $L_2$ are two distinct lines of the ruling. In particular, $\alpha(L_1)=\alpha(L_2)$.

In other words, the map $C_H \to \overline{C}_H$ is injective over $D_H\mysetminus D_6$, and $2$-to-$1$ over $D_H\cap D_6$.

\begin{lemma}\label{LemmaIntersectionD6DHmult2}
	The plane curves $D_6$ and $D_H$ only meet with multiplicity exactly $2$.
\end{lemma}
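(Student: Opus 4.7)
The plan is to exploit the determinantal descriptions from \S\ref{SectionCoordinates} and compare tangent directions at each intersection point. Taking $H = \{t_0 = 0\}$ as in that section, I have $D_6 = V(\det M_q)$ and $D_H = V(M_{11})$, where $M_{11}$ is the principal $(1,1)$-minor of the $4\times 4$ symmetric matrix $M_q$. Since assumption \ref{Cond4} of \S\ref{SectionGeneralityAssumptions} already bounds the multiplicity above by $2$, the task reduces to showing that at every $y_0 \in D_6 \cap D_H$ the (smooth) curves $D_6$ and $D_H$ share a tangent line.

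First I identify the kernel of $M_q(y_0)$. Smoothness of $D_6$ at $y_0$ forces $\mathrm{rank}\, M_q(y_0) = 3$: if the rank were smaller the adjugate would vanish and Jacobi's formula $d(\det M) = \mathrm{Tr}(\mathrm{adj}(M)\, dM)$ would yield $d(\det M_q)|_{y_0} = 0$, contradicting smoothness. Hence $\ker M_q(y_0) = \matC v$ for a nonzero vector $v = (v_0, v_1, v_2, v_3)$, and $\mathrm{adj}\, M_q(y_0) = \lambda\, v v^T$ for some $\lambda \in \matC^*$. Reading off the $(1,1)$-entry, $M_{11}(y_0) = \lambda v_0^2$, so $y_0 \in D_H$ is equivalent to $v_0 = 0$, i.e.\ the vertex of the quadric cone $Q_{y_0}$ lies on $H$.

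Next I compute both tangent directions using Jacobi's formula. The linear part of $\det M_q$ at $y_0$ is proportional to $v^T (dM_q|_{y_0}) v$. For $M_{11} = \det M'_q$, where $M'_q$ is the $3\times 3$ submatrix of $M_q$ obtained by deleting the first row and column, smoothness of $D_H$ at $y_0$ (assumption \ref{Cond3}) forces $\mathrm{rank}\, M'_q(y_0) = 2$ by the same argument. From $M_q(y_0) v = 0$ together with $v_0 = 0$, I see that $v' := (v_1, v_2, v_3)$ lies in $\ker M'_q(y_0)$, so $\mathrm{adj}\, M'_q(y_0) = \mu\, v'(v')^T$ with $\mu \in \matC^*$. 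The linear part of $M_{11}$ at $y_0$ is then proportional to $(v')^T (dM'_q|_{y_0}) v' = v^T (dM_q|_{y_0}) v$, where the equality uses $v_0 = 0$ to kill the contributions from the deleted row and column.

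Both linear parts are nonzero scalar multiples of the same linear form, so $D_6$ and $D_H$ share a tangent line at $y_0$. Being smooth plane curves with coinciding tangents, they meet with multiplicity at least $2$ there, and combined with \ref{Cond4} the multiplicity is exactly $2$. The delicate step is justifying the rank reductions---that smoothness of $D_6$ forces $\mathrm{rank}\, M_q(y_0) = 3$ and smoothness of $D_H$ forces $\mathrm{rank}\, M'_q(y_0) = 2$---since everything else is a formal manipulation of Jacobi's formula.
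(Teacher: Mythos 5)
Your argument is correct, and it follows the same two-step strategy as the paper --- the upper bound on the multiplicity is delegated to assumption (iv) of \S\ref{SectionGeneralityAssumptions}, and the real content is that $D_6$ and $D_H$ are tangent at every common point --- but the tangency step is carried out by a genuinely different route. The paper never touches the Gram matrix: it writes $Q_y\subset\matP^5$ by equations $f_1,\dots,f_p$, notes that $D_6$ (resp.\ $D_H$) is governed by the order-$3$ minors of the Jacobian $J$ (resp.\ order-$4$ minors of $J_H$), and expands $\partial_{\vec v}M_H$ along the row of the equation of $H$ so that every term involves a $3$-minor of $J$ or its derivative, all of which vanish along a direction $\vec v$ tangent to $D_6$. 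Your adjugate/Jacobi computation on $M_q$ is tighter: it pins down the rank of the cone ($\mathrm{rank}\,M_q(y_0)=3$, forced by smoothness of $D_6$ rather than assumed), gives the geometric characterisation $y_0\in D_H\iff v_0=0$ (the vertex lies on $H$), and exhibits the common tangent line explicitly as the kernel of $v^T(dM_q|_{y_0})v$, whereas the paper's minor expansion only shows containment of tangent directions. The two "delicate" rank reductions you flag are in fact fully justified by the adjugate-vanishing argument you already give, so there is no gap. The only cosmetic point is the normalisation $H=\{t_0=0\}$: for a general $H\in\calU$ one should first apply a linear change of coordinates preserving $P$ (as in \S\ref{SectionCoordinates}) before reading off $D_H$ as the minor $M_{11}$.
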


\begin{proof}
	In view of \S \ref{SectionGeneralityAssumptions}, it is enough to prove that $D_6$ and $D_H$ intersect with multiplicity at least $2$ at every point of the intersection, that is they share the same tangent line at any point of the intersection.
	
	The quadric surface $Q_y=\pi^{-1}(y)$ in $\matP^5$ is given by equations $f_1(y), \dots,f_p(y)$ (we will omit $y$ in the notation, but we must think that everything depends on $y\in\matP^2$). Consider the Jacobian matrix
	$$J\coloneqq \begin{pmatrix}
	\partial_1f_1 & \cdots & \partial_6f_1 \\
	\vdots & & \vdots \\
	\partial_1f_p & \cdots & \partial_6f_p
	\end{pmatrix}.$$
	The equations for $Q_y\cap H$ in $\matP^5$ are $f_1,\dots,f_p,f_{p+1}$ with $f_{p+1}$ the linear equation of $H$. The Jacobian matrix for the equations of $Q_y \cap H$ is
	$$J_H\coloneqq \begin{pmatrix}
	\partial_1f_1 & \cdots & \partial_6f_1 \\
	\vdots & & \vdots \\
	\partial_1f_p & \cdots & \partial_6f_p \\
	\partial_1f_{p+1} & \cdots & \partial_6 f_{p+1}
	\end{pmatrix}.$$
	If $Q_y$ is singular, then any minor $M(y)$ of $J(y)$ of order $3$ vanishes. Fix a point $y\in D_6\cap D_H$ and let $\vec{v}$ be a tangent vector of $\matP^2$ at $y$, tangent to $D_6$. Since $D_6$ parametrizes singular quadrics, we must have $(\partial_{\vec{v}}M)(y)=0$.
	
	Since $y\in D_H$, we know that any minor $M_H(y)$ of $J_H(y)$ of order $4$ vanishes. Such a minor must have the form
	$$M_H = \det \begin{pmatrix}
	\partial_{j_1}f_{i_1} & \cdots & \partial_{j_4}f_{i_1} \\
	\vdots & & \vdots \\
	\partial_{j_1}f_{i_4} & \cdots & \partial_{j_4}f_{i_4}
	\end{pmatrix}$$
	for suitable indices.
	Now, we compute $\partial_{\vec v}M_H$ expanding the determinant with respect to the last line.
	\begin{eqnarray*}
		\partial_{\vec v}M_H&=&\partial_{\vec v} \sum_{k=1}^{4} \varepsilon_k\partial_{j_k}f_{i_4}M_k \\
		&=& \sum_{k=1}^{4}\varepsilon_k\Big( \partial_{\vec v} \left( \partial_{j_k}f_{i_4} \right) M_k+ \left(\partial_{j_k}f_{i_4}\right)\partial_{\vec v}M_k\Big).
	\end{eqnarray*}
	where $\varepsilon_k$ is $1$ or $(-1)$ and $M_k$, $k=1,\dots, 4$ are minors of order $3$, in which the derivatives of $f_{p+1}$ do not appear. In particular, the $M_k$'s are minors of order $3$ of $J$. Hence, these minors are all $0$ at $y$ ($Q_y$ is singular), and their derivatives along $\vec{v}$ at $y$ are $0$ ($\vec{v}$ is tangent to $D_6$). Therefore $\partial_{\vec v}M_H(y)=0$, so we obtain that $\vec{v}$ is also tangent to $D_H$. 
\end{proof}

\begin{remark}
	The space of quintics $|\calO_{\matP^2}(5)|$ is $20$-dimensional. Since the condition \textit{"having at least} $1$ \textit{point of intersection of multiplicity} $k$ \textit{with a given curve"} is a codimension $k-1$ condition, the expected dimension of the locus of quintics $15$-fold tangent to $D_6$ is $5$ (in fact, we prove in \cref{LemmaLinearSystemD515TangD6isdim5} that $5$ is the actual dimension). Moreover, any other possibility of intersection of a quintic with $D_6$, with multiplicity at least $2$ at each point, gives an expected dimension strictly smaller than $5$.
\end{remark}

As a consequence of \cref{LemmaIntersectionD6DHmult2} and \cref{LemmaMult2IntImpliesNodalOnK3}, the curve $\overline{C}_H$ is a $15$-nodal curve on $S$, each node lying over the branched locus $D_H\cap D_6$.

\begin{proposition}\label{PropVIsOpenInP5VeroLinEmbed}
	The image $\calV\subset |5H_S|$ is an open subset in some $\matP^5$, which injects in $|5H_S|\simeq \matP^{26}$ by the composition of a Veronese embedding $\matP^5 \hookrightarrow \matP^{20}$ and a linear injection $\matP^{20}\hookrightarrow \matP^{26}$.
\end{proposition}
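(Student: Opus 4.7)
The plan is to realize $\calV$ via a squaring construction on the discriminant sextic $D_6$, where the two embeddings of the statement appear naturally. The \emph{linear injection} $\matP^{20}\hookrightarrow\matP^{26}$ will be the pullback $p^{*}\colon|\calO_{\matP^2}(5)|\hookrightarrow|5H_S|$, which is injective (since $p$ is dominant) and linear; both sides have the right dimensions ($21$ and $27$) by Riemann--Roch on $\matP^2$ and on the K3 surface, and by construction $\calV$ is contained in $p^{*}|\calO_{\matP^2}(5)|$ since each $\overline{C}_H$ is a pullback. For the \emph{Veronese embedding} $\matP^5\hookrightarrow\matP^{20}$, I will use the sextic $D_6$: the short exact sequence $0\to\calO_{\matP^2}(-1)\to\calO_{\matP^2}(5)\to\calO_{D_6}(5)\to 0$ together with $H^0(\matP^2,\calO_{\matP^2}(-1))=0$ show that the restriction morphism $\rho\colon H^0(\matP^2,\calO_{\matP^2}(5))\to H^0(D_6,L)$ (with $L\coloneqq\calO_{D_6}(5)$ of degree $30$) is injective, and both spaces are $21$-dimensional (since $\deg L>2g(D_6)-2=18$), so $\rho$ is an isomorphism.

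Next I would exploit \cref{LemmaIntersectionD6DHmult2}: for any $D_H\in\calV$, the intersection $D_H\cap D_6$ equals $2E_H$ as a divisor on $D_6$, for some reduced effective divisor $E_H$ of degree $15$. Equivalently, $\rho(D_H)=g_H^{\,2}$ for some $g_H\in H^0(D_6,M_H)$ (unique up to sign), where $M_H\coloneqq\calO_{D_6}(E_H)$ is a square root of $L$. Since the set of square roots of $L$ is a torsor over the finite group $\Pic(D_6)[2]$, the assignment $H\mapsto M_H$ is locally constant on $\calV$. By irreducibility of $\calV$ (being the image of the irreducible $\calU$), the line bundle $M\coloneqq M_H$ is then independent of $H\in\calV$.

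The squaring map on global sections yields a morphism
\[ v\colon\matP H^0(D_6,M)\longrightarrow\matP H^0(D_6,L)\simeq|\calO_{\matP^2}(5)|,\quad [g]\longmapsto[\rho^{-1}(g^2)], \]
which is injective (since $g_1^2\in\matC^{\times}\cdot g_2^2$ forces $g_1\in\matC^{\times}\cdot g_2$) and is the degree-$2$ Veronese embedding. The previous step shows $\calV$ lies in $v(\matP H^0(D_6,M))$; conversely, $v([g])$ belongs to $\calV$ precisely when the divisor of $g$ on $D_6$ is reduced of length $15$ and the quintic $\rho^{-1}(g^2)$ is smooth, both open conditions on $[g]$. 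Hence $\calV$ is open in the image of $v$.

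Finally, I identify the domain with $\matP^5$. Riemann--Roch on $D_6$ gives $h^0(M)-h^1(M)=15-10+1=6$, so $h^0(M)\geq 6$. On the other hand, $\dim\calV=5$ since $\calU\subset(\matP^5)^{\vee}$ is open of dimension $5$ and $i\colon\calU\to\calV$ is quasi-finite by \cref{PropUto5HP2isfinite}; combining this with the openness of $\calV$ inside $v(\matP H^0(D_6,M))$, which has dimension $h^0(M)-1$, forces $h^0(M)=6$. The main obstacle is precisely the non-speciality of $M$ (equivalently $h^1(M)=0$); a direct geometric verification would require analysing the theta characteristic $\theta\coloneqq M\otimes\calO_{D_6}(-H_{\matP^2})$ attached to the construction, but the dimension count using that $\calV$ has the expected dimension $5$ lets me sidestep this computation.
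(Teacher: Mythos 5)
Your route is essentially the paper's: factor $i$ through $|5H_{\matP^2}|$, identify $H^0(\matP^2,\calO_{\matP^2}(5))$ with $H^0(D_6,\calO_{D_6}(5))$, write $D_H\cap D_6=2E_H$ with $E_H$ in a fixed linear system $|M|$ for a square root $M$ of $\calO_{D_6}(5)$ (pinned down, as in the paper, by finiteness of $\Pic^0(D_6)[2]$ and connectedness), and realize the Veronese as the squaring map $|M|\to |\calO_{\matP^2}(5)|$. All of that is correct.

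The gap is in the step where you ``sidestep'' the computation of $h^0(D_6,M)$. Your openness claim rests on the assertion that $v([g])\in\calV$ \emph{precisely when} the divisor of $g$ is reduced and $\rho^{-1}(g^2)$ is smooth. The ``if'' direction is a genuine surjectivity statement: it says that every smooth plane quintic everywhere tangent to $D_6$ along a reduced divisor of $|M|$ occurs as the discriminant curve $D_H$ of the conic fibration $\Bl_L(Y)\to\matP^2$ for some $H\in\calU$. You give no argument for this, and it is neither obvious nor used in the paper. Without it you only know that $\calV$ is a $5$-dimensional constructible subset of $v(\matP H^0(D_6,M))$ and that $h^0(M)\geq 6$ by Riemann--Roch; a $5$-dimensional constructible subset of a possibly higher-dimensional irreducible variety need not be dense, so you cannot conclude $h^0(M)=6$. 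The dimension count is therefore circular: it presupposes density/openness of $\calV$ in $|M|$, which is exactly what $h^0(M)=6$ is needed for. The paper closes this by proving $H^1(D_6,\calO_{D_6}(Z))=0$ directly (\cref{LemmaLinearSystemD515TangD6isdim5}): a nonzero $H^1$ would, by Serre duality, produce an effective degree-$3$ divisor $R\sim K-Z$ with $2R\sim H_{\matP^2}|_{D_6}$, i.e.\ a tritangent line to $D_6$, excluded by genericity assumption (ii) of \S\ref{SectionGeneralityAssumptions}. Once $\dim|Z|=5$ is known, openness of $\calV$ in $|Z|$ follows from quasi-finiteness of $\calU\to|Z|$ together with equality of dimensions, with no surjectivity claim. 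You should carry out the theta-characteristic/tritangent computation you mention rather than avoid it.
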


\begin{proof}
	
The image $\calV$ of $\calU$ in $|5H_S|$ is exactly the pullback by $\pi\colon S\to\matP^2$ of the image of $\widetilde{i}\colon\calU \to |5H_{\matP^2}|$. Fix a point $H\in\calU$ and consider the intersection $D_6\cap D_H$, seen as a divisor on $D_6$. There is an effective reduced divisor $Z$ on $D_6$ of degree $15$ satisfying $2Z=5h$, for $h\coloneqq H_{\matP^2}|_{D_6}$ the hyperplane class on $D_6$.
	
	\begin{lemma}\label{LemmaLinearSystemD515TangD6isdim5}
		The linear system $|Z|$ has dimension $5$.
	\end{lemma}
	
	\begin{proof}
		By Riemann-Roch, we know that $(h^0-h^1)(D_6,\calO_D(Z))=6$, so we must prove that $H^1(D_6,\calO_{D_6}(Z))=0$. If not, by Serre duality we obtain an effective divisor $R\sim K-Z$ of degree $3$, that is $R=p+q+r$ for $p,q,r\in D_6$ three points. Note that $2K-2Z \sim 6h - 5h \sim h$. Since $H^0(\matP^2,\calO_{\matP^2}(1)) \to H^0(D_6,\calO_{D_6}(1))$ is surjective, there must exist a line $l\subset \matP^2$ such that $l\cap D_6=2(p+q+r)$ as divisors on $D_6$, in other words the line $l$ must be tritangent to $D_6$. We avoid this situation by genericity of $D_6$ (see \S \ref{SectionGeneralityAssumptions}).
	\end{proof}
	
	Recall that $H^0(\matP^2,\calO_{\matP^2}(5)) \to H^0(D_6,\calO_{D_6}(5))$ is an isomorphism, and denote $\operatorname{Ver}\colon |Z| \hookrightarrow H^0(\matP^2,\calO_{\matP^2}(5))$ the multiplication by $2$ map (it is a degree $2$ embedding, because two quintics cannot be tangent in $15$ points). Then the image $\widetilde{i}(C_H)$ lies in $\operatorname{|Z|}$.
	Starting with another hyperplane $H'$ gives another divisor $Z'$ on $D_6$. But $2Z\sim 2Z'\sim 5h$, and a priori the divisor classes of $Z$ and $Z'$ differ by a $2$-torsion divisor on $D_6$ (in particular, $\operatorname{Ver}(|Z|)=\operatorname{Ver}(|Z'|)$ if and only if $Z=Z'$). However, $\Pic^0(D_6)[2]\simeq \matZ/2\matZ^{\oplus 20}$ is finite, hence $\widetilde{i}(\calU)\subset |Z|$ by connectedness.
	We conclude that $i\colon \calU \to |5H_S|$ is the composition of $\calU \to |Z|$, $\operatorname{Ver}\colon |Z| \hookrightarrow |5H_{\matP^2}|$ (Veronese embedding) and $\pi^*\colon |5H_{\matP^2}| \hookrightarrow |5H_S|$ (linear embedding). Note that the image of $\calU \to |Z|$ is open by \cite[\href{https://stacks.math.columbia.edu/tag/0F32}{Tag 0F32}]{stacks-project}, because it is quasi-finite by \cref{PropUto5HP2isfinite} and dominant (both spaces have the same dimension).
\end{proof}

\begin{corollary}
	The locally closed 
	subspace $\calV\subset |5H_S|$ is smooth.	
\end{corollary}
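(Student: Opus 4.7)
The plan is to deduce smoothness directly from \cref{PropVIsOpenInP5VeroLinEmbed}. That proposition already exhibits $\calV$ as the image of an open subset $\calU' \subset |Z| \simeq \matP^5$ under the composition
\[
\matP^5 \simeq |Z| \xrightarrow{\operatorname{Ver}} |5H_{\matP^2}| \simeq \matP^{20} \xrightarrow{\pi^*} |5H_S| \simeq \matP^{26}.
\]
So the only thing that needs checking is that this composition realizes $\calV$ as an open subscheme of something abstractly isomorphic to $\matP^5$, since then smoothness of $\calV$ follows from smoothness of $\matP^5$.

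The first step is to observe that the Veronese map $\operatorname{Ver}\colon |Z| \hookrightarrow |5H_{\matP^2}|$ is a closed immersion onto the Veronese variety $V_{5,2}$ of $\matP^{20}$, which is smooth and abstractly isomorphic to $\matP^5$. The second step is to observe that $\pi^*\colon |5H_{\matP^2}| \hookrightarrow |5H_S|$ is a linear embedding (it comes from the inclusion of vector spaces $H^0(\matP^2,\calO_{\matP^2}(5)) \hookrightarrow H^0(S,\calO_S(5H_S))$ induced by $\pi^*$, which is injective since $\pi$ is finite and surjective), hence a closed immersion onto a linear subspace. Composing, the map $|Z| \hookrightarrow |5H_S|$ is a closed immersion whose image is smooth and isomorphic to $\matP^5$. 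Restricting to the open subset $\calU' \subset |Z|$ that surjects onto $\calV$ (whose openness is exactly what is proved in the last sentence of \cref{PropVIsOpenInP5VeroLinEmbed}), we conclude that $\calV$ is an open subscheme of a smooth projective variety, hence smooth.

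There is no real obstacle here: the content is entirely in \cref{PropVIsOpenInP5VeroLinEmbed}, and this corollary is a formal consequence of the fact that both the Veronese embedding and the linear embedding $\pi^*$ are closed immersions with smooth image.
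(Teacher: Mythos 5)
Your proof is correct and matches the paper's intent: the paper states this corollary without proof as an immediate consequence of \cref{PropVIsOpenInP5VeroLinEmbed}, and your argument simply spells out that consequence (openness in the image of a closed immersion of $\matP^5$, hence smoothness). Nothing is missing.
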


Denote $\overline{\calC}$ the restriction of the universal curve over $|5H_S|$ to $\calV$. Consider the fibrewise resolution $\nu_\calU\colon \overline{\calC}^\nu \to \overline{\calC}$ over $\calU$ (see \cref{LemmaFibrewiseNormalizationCurves}). We denote $\overline{C}_H$, resp. $\overline{C}_H^\nu$, the fibre $\overline{\calC}_{i(H)}$, resp. $\overline{\calC}_{i(H)}^\nu$. 

%
%
\begin{remark}\label{RmkFamCisFibreProdCnu}
	The proof of \cref{PropUto5HP2isfinite} in fact shows that $i(H_1)=i(H_2)$ implies $C_{H_1}\simeq C_{H_2}$, therefore $\calC_{H}= C_H \simeq\overline{C}_{H}^\nu$. In particular, we have $\calC\simeq \overline{\calC}^\nu\times_\calV\calU$.
\end{remark}


Denote $M\coloneqq M_S(0,|5H_S|,-10)$ the moduli space of stable sheaves with respect to a generic fixed polarization (see \S \ref{SectionCompactifiedJacobianLagrangianFibrations}) . 

\begin{proof}[Proof of \cref{ThmMainPrymSubFibration}]\label{ProofMainThm1}
	First, by \cref{RmkFamCisFibreProdCnu} we get a natural map $\Pic_\calU^0(\calC) \to \Pic^0_\calV(\overline{\calC}^\nu)$, which is quasi-finite by \cref{PropUto5HP2isfinite}. As a direct application of \cref{ThmSubfibrationJacobianNormalizations}, we get an embedding $\Pic^0_\calV(\overline{\calC}^\nu)\hookrightarrow M$. Moreover, by \cref{ThmPrymSubfibrationOverNodalCurves}, the image in $M$ of $\Prym_\calU(\calC/\calD)\subset \Pic^0_\calU(\calC)$ by $f$ is a symplectic (hence smooth) subvariety.
\end{proof}

\section{Questions and related works}\label{SectionQuestions}

\subsection{About \cref{ThmPrymSubfibrationOverNodalCurves}}\label{SectionQuestionGenPrymFibConstruction}

We are confident that the construction of \S \ref{SectionSubfibPrymVar} can be performed in greater generality. Namely, we could assume that $S\to X$ is a double cover of a del Pezzo surface, or even an Enriques surface (in the latter case, the curves in $X$ must also have nodes, and one should consider Prym varieties of nodal curves), up to some assumptions on the curves. It is not too hard to prove, in nice cases, that the base $\calV$ is an open subset of a projective space. This gives good hopes for the following.

\begin{question}\label{QuestionSmoothCompactPrymFibration}
	Does the variety $P$ of \cref{ThmPrymSubfibrationOverNodalCurves} admit a smooth hyperk\"ahler compactification?
\end{question}

The results in the present article must be compared to \cite{ASFRelativePrymVar}. In \textit{loc. cit.}, the authors construct a Lagrangian fibration $P_E$ in Prym varieties as a subfibration of a Beauville--Mukai system on a K3 surface lying over an Enriques surface. Moreover, they study the singularities of the closure $\overline{P}_E$ and they prove that it either admits no symplectic desingularization (whenever the linear system is not hyperelliptic), or it admits in the hyperelliptic case a symplectic resolution, which is a hyperk\"ahler manifold of K$3^{[n]}$-type. It is interesting to see that the example we exhibit (\cref{ThmPrymSubfibrationOverNodalCurves} for $k=5$) is birational (up to a cover) to a hyperk\"ahler manifold of OG10-type (\cref{ThmMainPrymSubFibration}). These remarks yield the following question, as a first step towards \cref{QuestionSmoothCompactPrymFibration}.

\begin{question}
	Does the closure $\overline{P}$ admit the structure of an \textit{irreducible symplectic variety}?
\end{question}

We deliberately do not specify what we mean by irreducible symplectic variety, but one could think of it as the singular analogue of a hyperk\"ahler manifold.

\subsection{About \cref{ThmMainPrymSubFibration}}\label{SectionQuestionMainThm}

We have described the intermediate Jacobian fibration $\calJ\to \calU$ of a general cubic fourfold containing a plane as a subfibration of a Beauville--Mukai system only \textit{rationally} and \textit{up to a finite cover}. A first natural question is whether these hypotheses could be removed.

\begin{question}
	Is the map $\calU \to |5H_{\matP^2}|$ of \cref{PropUto5HP2isfinite} injective? Does it extend to a map $(\matP^5)^\vee \to |5H_{\matP^2}|$?
\end{question}

If these hold true, then the hyperk\"ahler compactification $\overline{\calJ}$ of $\Prym_\calU(\calC/\calD)$ constructed in \cite{SaccaBiratGeomIntJacFibCubic4f} could be thought as a symplectic desingularization of the closure of the variety $P$ constructed in \cref{ThmPrymSubfibrationOverNodalCurves}.

Let us mention here that the intermediate Jacobian fibration $\calJ\to\calU$ of a cubic fourfold $X$ containing a plane $P$ is isomorphic to its twisted version $\calJ^T\to \calU$ (parametrizing degree $1$ $1$-cycle in the hyperplane sections of $X$) studied in \cite{VoisinInterJacFibTwistCase}, since $X$ admits a relative cycle over $\calU$ given fibrewise over $H\in \calU$ by $P\cap H$. In particular, $\calJ^T\to \calU$ admits a section. Moreover, $\calJ^T\to\calU$ admits a compactification as a moduli space of stable objects on the Kuznetsov component of $\D^b(X)$ \cite{LPZEllQuinticsCubicFourfoldOG10LagFib}. The latter is identified with $\D^b(S,\alpha)$, where $S$ is the K3 surface associated to $X$ and $\alpha\in\Br(S)$ is the class associated to the Brauer-Severi variety $\widetilde{F}\to S$ (see \S \ref{SectionGeomSettings}). We wonder if one could replace the moduli space $M$ of \cref{ThmMainPrymSubFibration} with a moduli space of $\alpha$-\textit{twisted} sheaves, since the Brauer class $\alpha$ does not play any role in the present paper.

Finally, one could wonder what happens if we perform the same construction, replacing $\calJ$ with $\calJ^T$, or more generally with the space $\calJ^k$, $k\in\matZ_{\geq 0}$, which parametrizes relative degree $k$ $1$-cycles in the hyperplane sections of $X$. Fix some hyperplane section $Y=X\cap H$ and set $L=P\cap H$. Moving from $J(Y)\simeq \CH_1(Y)_{0}$ to $J^k(Y)\coloneqq \CH_1(Y)_k$ by adding $d[L]$ is equivalent to moving from $\Prym(C_H/D_H)$ to $\Prym^{5k}(C_H/D_H)$, where the latter is one component of the preimage of $\calO_{D_H}(k)$ by the norm map $C_H\to D_H$ (a torsor over $\Prym(C_H/D_H)$). Pushing these translated Pryms to the K3 surface $S$, we obtain torsion sheaves with Mukai vector $(0,5H_S,5k-10)$, and one can perform the same construction as in the paper. Note that this Mukai vector will always be non-primitive, for any $k$.




\bibliographystyle{abbrv}
\bibliography{bib_IJcubics}

\end{document}